\newenvironment{breakablealgorithm}
  {
     \refstepcounter{algorithm}
     \hrule height.8pt depth0pt \kern2pt
     \renewcommand{\caption}[2][\relax]{
       {\raggedright\textbf{\fname@algorithm~\thealgorithm} ##2\par}%
       \ifx\relax##1\relax 
         \addcontentsline{loa}{algorithm}{\protect\numberline{\thealgorithm}##2}%
       \else 
         \addcontentsline{loa}{algorithm}{\protect\numberline{\thealgorithm}##1}%
       \fi
       \kern2pt\hrule\kern2pt
     }
  }{
     \kern2pt\hrule\relax
  }
\theoremstyle{plain}
\newtheorem{theorem}{Theorem}[section]
\newtheorem{proposition}[theorem]{Proposition}
\newtheorem{lemma}[theorem]{Lemma}
\theoremstyle{definition}
\newtheorem{definition}[theorem]{Definition}
\newtheorem{example}[theorem]{Example}
\newtheorem{remark}[theorem]{Remark}
\theoremstyle{remark}
\newcommand\RR{\mathbb{R}}
\newcommand\by{\boldsymbol{y}}
\newcommand\be{\boldsymbol{e}}
\newcommand\bc{\boldsymbol{c}}
\newcommand\bk{\boldsymbol{k}}
\newcommand\bw{\boldsymbol{w}}
\newcommand\bW{\boldsymbol{W}}
\newcommand\bg{\boldsymbol{g}}
\newcommand\bx{\boldsymbol{x}}
\newcommand\bv{\boldsymbol{v}}
\newcommand\bz{\boldsymbol{z}}
\newcommand\bY{\boldsymbol{Y}}
\newcommand\bA{\boldsymbol{A}}
\newcommand\bI{\boldsymbol{I}}
\newcommand\bd{\boldsymbol{d}}
\newcommand\hbd{\hat{\boldsymbol{d}}}
\newcommand{\defi}{\textbf}
        \pgfplotsset{compat=1.17}
        \tikzset{%
        fwdrxn/.style={very thick, arrows={-Stealth[length=5pt,width=5pt]}},
        revrxn/.style={very thick, arrows={-Stealth[length=5pt,width=5pt,left]}},
        newt/.style={turq, opacity=0.15}
        }
        \tikzset{near start abs-right/.style={xshift=1cm}}
        \tikzset{near start abs-left/.style={xshift=-3.5cm}}
        \tikzset{near start abs-up/.style={yshift=1.5cm}}
        \tikzset{near start abs-down/.style={yshift=-1cm}}
    \definecolor{viridisyellow}{RGB}{253,231,36}
    \definecolor{viridisyellowpale}{RGB}{239,223,81}
    \definecolor{viridisgreen}{RGB}{121,209,81}
        \definecolor{hlgreen}{RGB}{16,115,16}
    \definecolor{viridisturq}{RGB}{34,167,132}
    \definecolor{viridisblue}{RGB}{64,67,135}
    \definecolor{viridisviolet}{RGB}{68,1,84}
    \definecolor{magmapink}{RGB}{188,81,119}
    \definecolor{pastelpink}{RGB}{253,191,210}
	\definecolor{ratecnst}{RGB}{172,172,172}
\begin{document}

\title{Weakly reversible deficiency one realizations of polynomial dynamical systems: an algorithmic perspective}

\author[1]{
         Gheorghe Craciun%
}
\author[2]{
        Abhishek Deshpande%
}
\author[3]{
        Jiaxin Jin%
}
\affil[1]{\small Department of Mathematics and Department of Biomolecular Chemistry, University of Wisconsin-Madison}
\affil[2]{Center for Computational Natural Sciences and Bioinformatics, \protect \\
 International Institute of Information Technology Hyderabad}
\affil[3]{\small Department of Mathematics, The Ohio State University}

\date{}

\maketitle

\begin{abstract}
\noindent
Given a dynamical system with polynomial right-hand side, can it be generated by a reaction network that possesses certain properties? This question is important because some network properties may guarantee specific {\em dynamical} properties, such as existence or uniqueness of equilibria, persistence, permanence, or global stability. Here we focus on this problem in the context of {\em weakly reversible deficiency one networks}. In particular, we describe an algorithm for deciding if a polynomial dynamical system admits a weakly reversible deficiency one realization, and identifying one if it does exist. In addition, we show that weakly reversible deficiency one realizations can be partitioned into mutually exclusive {\em Type I} and {\em Type II} realizations, where Type I realizations guarantee existence and uniqueness of positive steady states, while Type II realizations are related to stoichiometric generators, and therefore to multistability. 
\end{abstract}

\section{Introduction}

By a  \emph{polynomial dynamical system} we mean a dynamical system of the form
\begin{equation}\label{eq:poly-intro}
\begin{split}
    \frac{dx_1}{dt} &= p_1(x_1, \ldots, x_n), \\ 
    \frac{dx_2}{dt} &= p_2(x_1, \ldots, x_n), \\ 
                    &\qquad \quad \vdots  \\
    \frac{dx_n}{dt} &= p_n(x_1, \ldots, x_n), \\ 
\end{split}
\end{equation}
where each $p_i(x_1,\ldots, x_n)$ is a polynomial in the variables  $x_1,\ldots, x_n$. 
Such systems can exhibit exotic behaviors like multistability, presence of oscillations, and chaos due to the underlying nonlinearities.
We are especially interested in the dynamics of these systems when restricted to the {\em positive orthant}, because such systems are very common models of biological interaction networks, population dynamics models, or models for the transmission of infectious diseases~\cite{voit2015150,feinberg2019foundations,gunawardena2003chemical, yu2018mathematical}. 
%
%
%

Very often,  polynomial dynamical systems are generated by {\em reaction networks}. It is often convenient to study the graphical structure of these networks to make inference about their dynamics. It is also possible to study the \emph{inverse problem}, i.e., for some given polynomial dynamical systems, ask what reaction networks can generate them. Due to the phenomenon of \emph{dynamical equivalence}~\cite{craciun2008identifiability,horn1972general}, such a network may not be unique, i.e., there  exist multiple reaction networks that  generate the same dynamics. 

A key quantity in the study of these networks is its {\em deficiency}. In particular, networks possessing low deficiency have been studied in reaction network theory using the Deficiency Zero and Deficiency One theorems~\cite{feinberg2019foundations,horn1972general, horn1972necessary, feinberg1980chemical,feinberg1987chemical,feinberg1995existence}. In particular, the Deficiency One Theorem~\cite{feinberg2019foundations,feinberg1995existence} guarantees  {\em uniqueness} of the steady state within each linear invariant subspace; this, together with the {\em existence} result in~\cite{boros2019existence} completely characterizes the steady states of weakly reversible networks that satisfy the hypotheses of the  Deficiency One Theorem.  Further, weakly reversible networks (i.e., networks where each reaction is part of a cycle) are related to dynamical properties like persistence, permanence, and the existence of a globally attracting steady state~\cite{gopalkrishnan2014geometric, anderson2011proof, craciun2013persistence, boros2020permanence}. 

The problem of identifying weakly reversible deficiency {\em zero} realizations has been addressed in~\cite{craciunalgorithm}. Here we analyze the realizability problem for weakly reversible reaction networks with deficiency {\em one}. In particular, given a polynomial dynamical system, we describe an algorithm to identify if there exists a weakly reversible deficiency one reaction network that generates this dynamical system.
Moreover, if  weakly reversible deficiency one realizations do exist, our algorithm uses the geometry of some convex cones generated using the net reaction vectors to construct one such realization explicitly.

\bigskip

\textbf{Structure of this paper.}
In Section~\ref{sec:reaction_networks}, we recall some basic notions from reaction network theory. 
Primarily, we introduce dynamical equivalence and the matrix of net reaction vectors.
In Section \ref{sec:weakly_reversible_deficiency_one}, we give a short primer on weakly reversible deficiency one reaction networks, and define Type I and Type II weakly reversible deficiency one realizations. 	
In Section~\ref{sec:pointed_cone}, we analyze the pointed cone $\text{ker}(\bW)\cap\mathbb{R}^m_{\geq 0}$ and its minimal set of generators in the context of weakly reversible deficiency one networks.
Moreover, we prove there cannot exist a dynamical equivalence between such networks of two types in Theorem~\ref{thm:unique of deficiency_one under same linkage}. 
In Section~\ref{sec:algorithms}, we state the main algorithm of our paper: Algorithm \ref{algorithm:WR_def_one}, which checks the existence of a weakly reversible deficiency one realization and returns a realization if it exists.
In Section~\ref{sec:discussion}, we summarize our findings in this paper and flesh out directions for future work.

\bigskip

\textbf{Notation.}
We let $\mathbb{R}_{\geq 0}^n$ and $\mathbb{R}_{>0}^n$ denote the set of vectors in $\mathbb{R}^n$ with non-negative and positive entries respectively. 
Given two vectors $\bx = (\bx_1, \ldots, \bx_n)^{\intercal}\in \RR^n_{>0}$ and $\by = (\by_1, \ldots, \by_n)^{\intercal} \in \RR^n$, we use the vector operation as follows:
\begin{equation} \notag
\bx^{\by} := \bx_1^{y_{1}} \ldots \bx_n^{y_{n}}.
\end{equation}
Given a positive integer $m$, we denote $[m] := \{1, \ldots, m \}$.

\section{Reaction networks}
\label{sec:reaction_networks}

\subsection{Terminology}

\begin{definition} 
A \defi{reaction network} $G = (V, E)$, also called the \defi{Euclidean embedded graph (E-graph)}, is a directed graph in $\RR^n$, where $V \subsetneq \mathbb{R}^n$ represents a finite set of \defi{vertices}, and $E \subseteq V \times V$ represents a finite set of \defi{edges}. 
In this paper, there are neither self-loops nor isolated vertices in $G$.

\begin{enumerate}[label=(\alph*)]
\item Let $V = \{ \by_1, \ldots, \by_m \}$, and denote the \textbf{number of vertices} in $G$ by $m$.

\item We denote a directed edge by $(\by_i, \by_j) \in E$, which represents a reaction in the network. Here $\by_i$ and $\by_j$ are called the \defi{source vertex} and \defi{target vertex} respectively.
Moreover, we denote the \defi{reaction vector} associated with the edge $\by_i \rightarrow \by_j$ by $\by_j - \by_i \in\mathbb{R}^n$. 
\end{enumerate}
\end{definition}

\begin{definition}
Let $G=(V, E)$ be a Euclidean embedded graph.
The \defi{stoichiometric subspace of $G$} is the vector space spanned by the reaction vectors as follows:
\begin{equation} \notag
S = \text{span} \{\by' - \by\, |\, \by \rightarrow \by' \in E \}.
\end{equation}
Given a subset of vertices $V_0 \subseteq V$, the \defi{stoichiometric subspace defined by $V_0$} is
\begin{equation} \notag
S(V_0) = \text{span} \{ \by' - \by\, |\, \by \rightarrow \by' \in E \ \text{and} \ \by', \by \in V_0 \}.
\end{equation}
Furthermore, given a positive vector $\bx_0 \in\mathbb{R}_{>0}^n$, the polyhedron $(\bx_0 + S ) \cap \mathbb{R}^n_{>0 }$ is called the \defi{stoichiometric compatibility class} of $\bx_0$.
\end{definition} 

\begin{definition} 
Let $G=(V, E)$ be a Euclidean embedded graph.

\begin{enumerate}[label=(\alph*)]
\item The set of vertices $V$ is partitioned by its connected components, also called \defi{linkage classes}.
Every connected component is denoted by the set of vertices belonging to it. 

\item A connected component $L \subseteq V$ is \defi{strongly connected}, if every edge is part of an oriented cycle. 
Moreover, a strongly connected component $L \subseteq V$ is \defi{terminal strongly connected}, if for every vertex $\by\in L$ and $\by\rightarrow\by'\in E$, we have $\by'\in L$.

\item $G=(V,E)$ is \defi{weakly reversible}, if all connected components are strongly connected.
\end{enumerate}
\end{definition}

\begin{remark}
For any weakly reversible reaction network $G=(V, E)$, every vertex $\by \in V$ is a source and a target vertex.
Moreover, every connected component of $G$ is strongly connected, and terminal strongly connected.
\end{remark}

\begin{definition}
Let $G = (V, E)$ be a Euclidean embedded graph, which contains $m$ vertices and $\ell$ connected components. Denote the dimension of the stoichiometric subspace of $G$ by $s = \dim (S)$, the \defi{deficiency} of $G$ is a non-negative integer as follows:
\begin{equation*}    
\delta = m - \ell - s.
\end{equation*}
\end{definition}

\medskip

\noindent When considering a linkage class with $V_i \subseteq V$, we define the \defi{deficiency of a linkage class} as 
\begin{equation*}
\delta_i = |V_i| - 1 - \dim S(V_i).
\end{equation*}

One can check that 
\begin{equation} \label{delta inequality}
\delta \geq \sum_{i=1}^\ell \delta_i,
\end{equation}
where the equality holds when the stoichiometric subspaces of all linkage classes are linearly independent.

\medskip

\begin{figure}[H]
\centering
\includegraphics[scale=0.5]{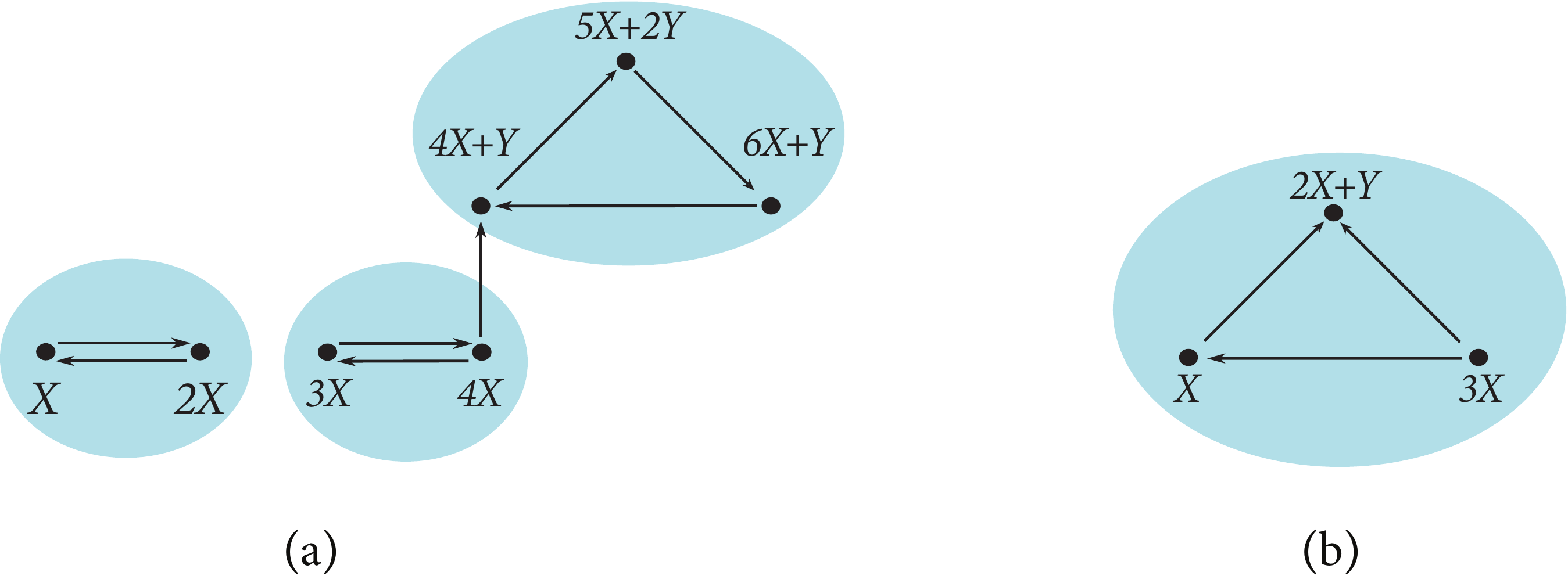}
\caption{\small (a) This reaction network consists of two linkage classes and contains three terminal strongly connected components (shown in circles).  It has a stoichiometric subspace of dimension 2 and a deficiency $\delta = n - \ell -s = 7- 2-2=3$. (b) This reaction network is weakly reversible and contains one terminal strongly connected component. It has a stoichiometric subspace of dimension 2 and a deficiency $\delta = n - \ell -s = 3- 1-2=0$.}
\label{fig:euclidean_graphs}
\end{figure} 

Figure~\ref{fig:euclidean_graphs} shows two reaction networks represented as Euclidean embedded graphs.
Given a reaction network, it can generate an extensive variety  of dynamical systems. Here, we focus on mass-action kinetics, which has been studied in~\cite{voit2015150,gunawardena2003chemical,yu2018mathematical,guldberg1864studies,feinberg1979lectures,adleman2014mathematics}.

\begin{definition} 
Let $G=(V,E)$ be a Euclidean embedded graph, we 
let $\bk = (k_{\by_i \rightarrow \by_j})_{\by_i \rightarrow \by_j \in E} \in \mathbb{R}_{>0}^{E}$
denote the \defi{vector of reaction rate constants}, where $k_{\by_i \rightarrow \by_j}$ or $k_{ij}$
is called the \defi{reaction rate constant} of the edge $\by_i \rightarrow \by_j \in E$. Furthermore, the \defi{associated mass-action system} generated by $(G, \bk)$ 
on $\RR^n_{>0}$ is
\begin{equation} \label{eq:mass_action}
\frac{\mathrm{d} \bx}{\mathrm{d} t}= \sum_{\by_i \rightarrow \by_j \in E}k_{\by_i \rightarrow \by_j} \bx^{\by_i}(\by_j - \by_i).
\end{equation}
A point $\bx^* \in \mathbb{R}_{>0}^n$ is called a \defi{positive steady state}, if 
\begin{equation} \label{eq:ss}
\sum_{\by_i \rightarrow \by_j \in E}k_{\by_i \rightarrow \by_j} (\bx^*)^{\by_i}(\by_j - \by_i)
= \mathbf{0}.
\end{equation}
\end{definition}

\noindent From~\cite{horn1972general}, it is known that every mass-action system admits the following matrix decomposition:
\begin{equation} \label{eq:mass_action_horn_jackson}
\frac{d\bx}{dt} = \bY \bA_{\bk}{\bx}^{\bY},
\end{equation}
where $\bY$ is called the \defi{matrix of vertices}, whose columns are the vertices, defined as
\begin{equation*}
\bY = (\by_1, \ \by_2, \ \ldots, \ \by_m),
\end{equation*}
 ${\bx}^{\bY}$ is the vector
of monomials given by
\begin{equation*}
{\bx}^{\bY} = ({\bx}^{\by_1},{\bx}^{\by_2}, \ldots, {\bx}^{\by_m})^{\intercal},
\end{equation*} 
and $\bA_{\bk}$ is the negative transpose of the Laplacian of $(G, \bk)$, defined as
\begin{equation*}
[\bA_{\bk}]_{ji} =
\begin{cases}
k_{\by_i\rightarrow\by_j}, & \text{if } \by_i\rightarrow\by_j \in E, \\[5pt]
- \sum\limits_{\by_i\rightarrow\by_j \in E} k_{\by_i\rightarrow\by_j}, & \text{if } i=j, \\[5pt]
0,  & \text{otherwise}.
\end{cases}
\end{equation*}
Here, $\bA_{\bk}$ is called the \defi{Kirchoff} matrix, whose column sums are zero from the definition. 

\medskip

The properties of the kernel of $\bA_{\bk}$ are well known in reaction network theory~\cite{feinberg2019foundations,gunawardena2003chemical,feinberg1977chemical}. 
Below we collect some of the most important properties.

\begin{theorem}[\cite{feinberg1977chemical}]
\label{thm:supp_terminal_linkage}
Let $(G, \bk)$ be a mass-action system, and $T_1, T_2, \ldots, T_t$ be the terminal strongly connected components of $G$. Then there exists a basis $\{\bc_1, \bc_2, \ldots, \bc_t\}$ for $\ker (\bA_{\bk})$, such that 
\begin{equation*}
\bc_q =
\begin{cases} 
    \begin{array}{cl}
         [\bc_q]_i  > 0, & \text{ if } \by_i \in T_q, \\[5pt]
         [\bc_q]_i  = 0, & \text{ otherwise.}
    \end{array} 
\end{cases}
\end{equation*}
\end{theorem}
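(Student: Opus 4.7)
My plan is to build the claimed basis one terminal component at a time, and then verify that the resulting $t$ vectors are linearly independent and span $\ker(\bA_{\bk})$. The organizing observation is that because $T_q$ is \emph{terminal}, every edge out of a vertex of $T_q$ stays in $T_q$; consequently the principal submatrix $\bA_{\bk}^{(q)} := [\bA_{\bk}]_{T_q \times T_q}$ is itself a Kirchhoff matrix (its column sums vanish). Moreover, for $j \notin T_q$ and $i \in T_q$ the entry $[\bA_{\bk}]_{ji}$ would record the rate of an edge $\by_i \to \by_j$ leaving $T_q$, which by terminality does not exist, so $[\bA_{\bk}]_{ji} = 0$. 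Hence any vector supported on $T_q$ that lies in $\ker(\bA_{\bk}^{(q)})$ automatically lies in $\ker(\bA_{\bk})$ once extended by zero.

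The next step is to exhibit a strictly positive generator of $\ker(\bA_{\bk}^{(q)})$. My preferred route is the matrix-tree theorem: define
\begin{equation*}
[\bc_q]_i = \sum_{\tau \in \mathcal{T}_i(T_q)} \ \prod_{(\by_j \to \by_k) \in \tau} k_{\by_j \to \by_k} \quad \text{for each } \by_i \in T_q,
\end{equation*}
where $\mathcal{T}_i(T_q)$ is the set of spanning subtrees of the subgraph on $T_q$ with all edges oriented toward the root $\by_i$, and $[\bc_q]_i = 0$ otherwise. Strong connectedness of $T_q$ guarantees $\mathcal{T}_i(T_q) \neq \emptyset$, and $\bk > 0$ gives $[\bc_q]_i > 0$ on $T_q$. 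The identity $\bA_{\bk}^{(q)} (\bc_q|_{T_q}) = \mathbf{0}$ is then a standard edge-swap cancellation between each rooted tree and its neighbors. An equivalent alternative, if one prefers to avoid combinatorics, is to note that $-\bA_{\bk}^{(q)}$ is a singular irreducible M-matrix with zero column sums, so Perron--Frobenius yields a one-dimensional null space generated by a positive vector.

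What remains, and what I expect to be the main obstacle, is to prove that $\{\bc_1, \ldots, \bc_t\}$ actually spans $\ker(\bA_{\bk})$. Linear independence is immediate because the supports $T_q$ are pairwise disjoint, so it suffices to show $\dim \ker(\bA_{\bk}) \leq t$. For any $\bv \in \ker(\bA_{\bk})$, the key intermediate claim is that $\bv$ vanishes at every vertex not belonging to some $T_q$; once this is granted, the restriction of $\bv$ to each $T_q$ lies in the one-dimensional space $\ker(\bA_{\bk}^{(q)})$, so $\bv$ is a linear combination of the $\bc_q$. The vanishing claim is cleanest via the Markov-chain viewpoint: $\bA_{\bk}^T$ is the generator of a continuous-time Markov chain on $V$ whose transition graph coincides with $G$, elements of $\ker(\bA_{\bk})$ correspond to its stationary measures, and a classical result ensures that stationary measures on a finite state space are supported exactly on the absorbing strongly connected classes, which here are precisely $T_1, \ldots, T_t$. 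A purely combinatorial alternative follows a directed path from any non-terminal vertex to some terminal component and uses the column-sum-zero and sign structure of $\bA_{\bk}$ to propagate the vanishing of $\bv$ along this path.
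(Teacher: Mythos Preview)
The paper does not prove this theorem; it is quoted from \cite{feinberg1977chemical} as a known structural fact about the Kirchhoff matrix, so there is no in-paper argument to compare against. Your proposal is the standard proof and is correct in outline: the matrix-tree (or Perron--Frobenius) construction of a strictly positive null vector on each terminal component, the extension-by-zero using terminality, and the disjoint-support independence are all fine. The one place where your write-up is thin is the spanning step. Your Markov-chain sentence is morally right but slightly imprecise: elements of $\ker(\bA_{\bk})$ are \emph{signed} stationary measures, and the statement that these vanish off the terminal components does not follow just from the probabilistic fact about \emph{nonnegative} stationary distributions. The clean linear-algebra version is to observe that, with $U$ the set of non-terminal vertices, terminality forces $[\bA_{\bk}]_{uj}=0$ for $u\in U$ and $j\notin U$, so any $\bv\in\ker(\bA_{\bk})$ satisfies $[\bA_{\bk}]_{U\times U}\,\bv|_U=\mathbf{0}$; then show $[\bA_{\bk}]_{U\times U}$ is nonsingular because $-[\bA_{\bk}]_{U\times U}$ is a column-diagonally-dominant $Z$-matrix with a strict inequality reachable from every column along a directed path to a terminal vertex (this is exactly your ``combinatorial alternative'' made precise). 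With that filled in, the argument is complete.
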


\begin{proposition}[{\cite[Corollary 4.2]{feinberg1979lectures}}] \label{prop:positive_vector_kernel}
Consider a mass-action system $(G, \bk)$, then $G$ is weakly reversible if and only if the kernel of the Kirchoff matrix $\bA_{\bk}$ contains a positive vector. 
\end{proposition}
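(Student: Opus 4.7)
The plan is to prove both directions using Theorem~\ref{thm:supp_terminal_linkage}, which supplies a basis $\{\bc_1,\ldots,\bc_t\}$ of $\ker(\bA_{\bk})$ where $\bc_q$ is strictly positive on the vertices of the $q$-th terminal strongly connected component $T_q$ and zero elsewhere. The key observation exploited in both directions is that the supports $T_1,\ldots,T_t$ are pairwise disjoint, so a linear combination $\sum_q \alpha_q \bc_q$ is strictly positive on vertex $\by_i$ if and only if $\by_i$ lies in some $T_q$ and the corresponding coefficient $\alpha_q$ is positive.

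For the forward direction, I would invoke the remark following the definition of weak reversibility to conclude that if $G$ is weakly reversible, then every connected component of $G$ is itself a terminal strongly connected component. In particular, every vertex belongs to some $T_q$. Then the vector $\bv := \bc_1 + \bc_2 + \cdots + \bc_t \in \ker(\bA_{\bk})$ has $[\bv]_i > 0$ for all $i \in [m]$, giving the desired positive element of the kernel.

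For the reverse direction, suppose $\bv \in \ker(\bA_{\bk})$ satisfies $\bv > 0$ componentwise. Writing $\bv = \sum_{q=1}^t \alpha_q \bc_q$, the disjointness of the supports forces every vertex $\by_i$ to lie in some $T_q$ (and every $\alpha_q$ to be positive). I would then show that if every vertex lies in some terminal strongly connected component, then each linkage class must coincide with a single such component, and hence is strongly connected. To do this, pick a linkage class $L$ and two vertices $\by, \by' \in L$ in (a priori different) terminal components $T_p, T_q$. Undirected connectedness yields a path in $L$ between them; I would walk along this path edge by edge and use the terminal property to argue that consecutive vertices must lie in the same terminal SCC. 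Concretely, an edge $\by_i \to \by_j$ on the path forces $\by_j \in T_{q(i)}$ because $T_{q(i)}$ is terminal; an edge traversed in the reverse direction $\by_j \to \by_i$ forces $\by_i \in T_{q(j)}$, which together with $\by_i \in T_{q(i)}$ and disjointness of terminal components gives $T_{q(i)} = T_{q(j)}$. Iterating, $T_p = T_q$, so $L$ is contained in (and therefore equals) a single terminal strongly connected component, establishing weak reversibility.

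The main obstacle, and essentially the only nontrivial step, is this last graph-theoretic argument in the reverse direction that upgrades ``every vertex belongs to a terminal SCC'' to ``every linkage class is strongly connected.'' Everything else is a direct application of Theorem~\ref{thm:supp_terminal_linkage} and the definitions.
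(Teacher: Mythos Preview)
The paper does not actually prove this proposition; it is stated with a citation to Feinberg's 1979 lecture notes and no argument is given. Your proof is correct and is essentially the standard derivation of this fact from the structure theorem for $\ker(\bA_{\bk})$ (Theorem~\ref{thm:supp_terminal_linkage}). The forward direction is immediate, and your graph-theoretic step in the reverse direction---propagating terminal-SCC membership along an undirected path using the defining property of terminality---is exactly what is needed and is handled cleanly.
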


\subsection{Net reaction vectors and dynamical equivalence}
\label{subsec:net_vectors}

Inspired by the matrix decomposition in \eqref{eq:mass_action_horn_jackson}, we introduce the key concept: net reaction vector, and illustrate a new matrix decomposition in terms of net reaction vectors.

\begin{definition} \label{defn:net_reaction_vector}
Let $(G, \bk)$ be a mass-action system, and $V_s = \{ \by_1, \by_2, \ldots, \by_{m_s}\} \subseteq V$ be \defi{the set of source vertices of $G$}. For each source vertex $\by_i \in V_s$, we denote the \defi{net reaction vector} $\bw_i$ corresponding to $\by_i$ by
\begin{equation} 
\bw_i = \sum\limits_{\by_i\rightarrow\by_j\in E}k_{\by_i\rightarrow\by_j}(\by_j - \by_i).
\end{equation}
Further, we denote the \defi{matrix of net reaction vectors of $G$} as follows:
\begin{equation}
\bW = \left(\bw_1, \bw_2, \ldots, \bw_{m_s} \right).
\end{equation}
It is convenient to refer to $\bw_j$ even when $\by_j \not\in V_s$, in which case we consider $\bw_j$ represents an empty sum, i.e., $\bw_j = \mathbf{0}$.
\end{definition}

\noindent From Definition \ref{defn:net_reaction_vector}, every net reaction vector $\bw_i$ corresponding to $\by_i$ can be expressed as 
\begin{equation}
\bw_i = \sum\limits_{\by_i\rightarrow\by_j\in E}k_{\by_i\rightarrow\by_j} \by_j 
- \bigg( \sum\limits_{\by_i\rightarrow\by_j\in E}k_{\by_i\rightarrow\by_j} \bigg) \by_i.
\end{equation}
Using a direct computation, we rewrite the mass-action system in~\eqref{eq:mass_action} as
\begin{equation} \label{eq:mass_action_net_reaction}
\frac{d\bx}{dt} = \bW {\bx}^{\bY_s},
\end{equation}
where $\bY_s$ is called the \defi{matrix of source vertices}, whose columns are source vertices, defined as
\begin{equation*}
\bY_s = (\by_1, \by_2, \ldots, \by_{m_s}),
\end{equation*}
and ${\bx}^{\bY_s}$ is the vector
of monomials given by
\begin{equation*}
{\bx}^{\bY_s} = ({\bx}^{\by_1},{\bx}^{\by_2}, \ldots, {\bx}^{\by_{m_s}})^{\intercal}.
\end{equation*}

Note that for any weakly reversible mass-action system $(G, \bk)$, we have $V_s = V$ and $\bY_s = \bY$.
Moreover, we derive that $\bW = \bY \bA_{\bk}$, which follows from matrix decomposition in \eqref{eq:mass_action_horn_jackson}.

\begin{definition}
Let $(G,\bk)$ and $(\bar{G},\bar{\bk})$ be two mass-action systems. Then $(G,\bk)$ and $(\bar{G},\bar{\bk})$ are called \defi{dynamically equivalent}, if for any $\bx \in \RR^n_{>0}$,
\begin{equation} \label{eq:eqDE}
\sum\limits_{\by\rightarrow \by'\in E}k_{\by\rightarrow \by'}{\bx}^{\by}(\by' - \by) =  \sum\limits_{\bar{\by}\rightarrow \bar{\by}'\in \bar{E}}\bar{k}_{\bar{\by}\rightarrow \bar{\by}'}{\bx}^{\bar{\by}}(\bar{\by}' - \bar{\by}).
\end{equation}
\end{definition}

\begin{remark} \label{rmk:dyn_net_vectors}
From Equation~\eqref{eq:eqDE}, we achieve a necessary and sufficient condition for dynamical equivalence 
between $(G,\bk)$ and $(\bar{G},\bar{\bk})$: 
for every vertex $\by_0 \in V_s \cup \bar{V}_s$, 
\begin{equation} \label{eq:DE}
\sum_{\by_0 \rightarrow \by \in E} k_{\by_0 \rightarrow \by} (\by - \by_0) =
\sum_{\by_0 \rightarrow \by' \in \bar{E}} \bar{k}_{\by_0 \rightarrow \by'}  (\by' - \by_0),
\end{equation}
From Definition \ref{defn:net_reaction_vector}, this is equivalent to
\begin{equation}
    \bw_0 =  \Bar{\bw}_0.
\end{equation}
Note that if either $\by_0 \not\in V_s$ or $\by_0 \not\in \bar{V}_s$, then one side of Equation~\eqref{eq:DE} gives an empty sum, i.e., $\bw_0 = \mathbf{0}$ or $\bar{\bw}_0 = \mathbf{0}$.
\end{remark}

\begin{example}
Two dynamically equivalent mass-action systems are presented in Figure~\ref{fig:dynamical_equivalence}. The mass-action systems (a) $(G, \bk)$ and (b) $(G', \bk')$ share the vertices
\begin{equation} \notag
\by_1 = \begin{pmatrix} 0 \\ 0 \end{pmatrix}, \quad 
\by_2 = \begin{pmatrix} 0 \\ 2 \end{pmatrix}, \quad 
\by_3 = \begin{pmatrix} 2 \\ 0 \end{pmatrix}.
\end{equation}
The reaction network $G'$ has an additional vertex
\begin{equation} \notag
\by_4 = \begin{pmatrix}  1 \\ 1 \end{pmatrix}.
\end{equation}
Given the rate constants in Figure \ref{fig:dynamical_equivalence}, we note that $\by_1$ is the only source vertex in both $G$ and $G'$. 
Thus, it suffices to check whether two systems satisfy Equation~\eqref{eq:DE} on the vertex $\by_1$.

For the system $(G, \bk)$, we have
\begin{equation}
\sum_{\by_1 \rightarrow \by \in E} k_{\by_1 \rightarrow \by} (\by - \by_1) =
k_{12} \begin{pmatrix} 0 \\ 2 \end{pmatrix}
+ k_{13} \begin{pmatrix} 2 \\ 0 \end{pmatrix}
= \begin{pmatrix} 4 \\ 4 \end{pmatrix}.
\end{equation}

For the system $(G', \bk')$, we have
\begin{equation}
\sum_{\by_1 \rightarrow \by' \in E'} k_{\by_1 \rightarrow \by'} (\by' - \by_1) =
k'_{12} \begin{pmatrix} 0 \\ 2 \end{pmatrix}
+ k'_{13} \begin{pmatrix} 2 \\ 0 \end{pmatrix}
+ k'_{14} \begin{pmatrix} 1 \\ 1 \end{pmatrix}
= \begin{pmatrix} 4 \\ 4 \end{pmatrix}.
\end{equation}
This shows two systems have the same net reaction vector corresponding to the source vertex $\by_1$, and are hence dynamically equivalent.

\begin{figure}[H]
\centering
\includegraphics[scale=0.5]{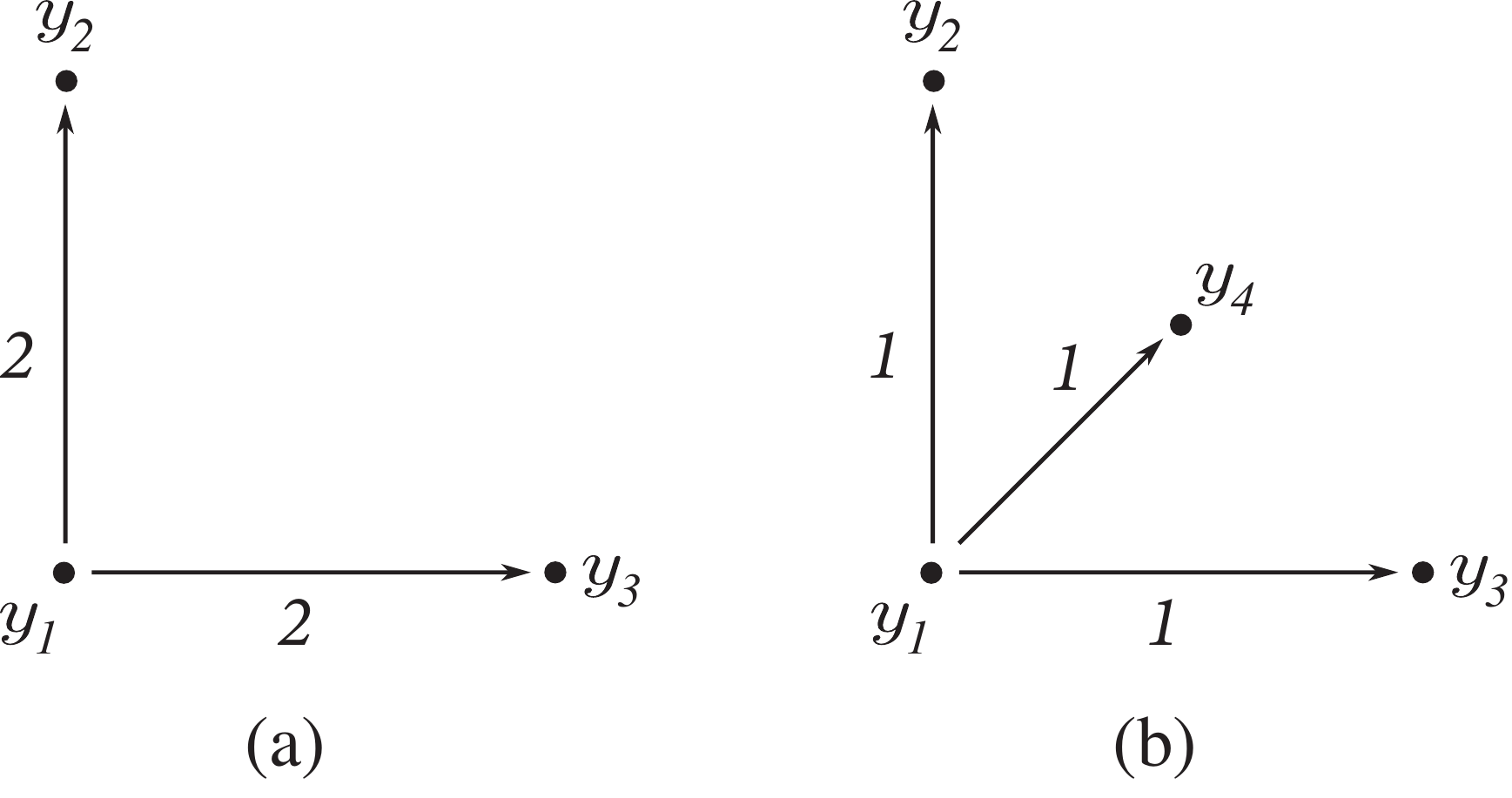}
\caption{\small Examples of dynamically equivalent networks (a) and (b).}
\label{fig:dynamical_equivalence}
\end{figure} 
\end{example}

\section{Weakly reversible deficiency one networks}
\label{sec:weakly_reversible_deficiency_one}

Deficiency analysis \cite{feinberg1987chemical,feinberg1995existence,horn1972necessary,feinberg1995multiple} forms an integral component of reaction network theory. The dynamics generated by reaction networks with low deficiency has been studied extensively using the Deficiency zero and Deficiency one theorems~\cite{feinberg2019foundations,feinberg1980chemical,feinberg1987chemical,feinberg1995existence}. In particular, properties like the existence of a unique equilibrium within each stoichiometric compatibility class, local asymptotic stability of the equilibrium owing to the existence of a Lyapunov function have been established. In this paper, we focus on weakly reversible deficiency one reaction networks. Such networks are ubiquitous in applications, and some noteworthy examples are listed below.

\begin{example}[Edelstein network,
\cite{ferragut2018liouville}]\label{ex:edelstein}
\[
X_1 \rightleftharpoons 2X_1,  \ X_1 + X_2 \rightleftharpoons X_3 \rightleftharpoons X_2
\]
This is a weakly reversible reaction network with deficiency $\delta = 5 - 2- 2 =1$. 
\end{example}

\begin{example}[Symmetry breaking network, \cite{feinberg2019foundations}]
\[
L+2R+P \rightleftharpoons R+Q, \ R+2L+P  \rightleftharpoons 3L+Q, \ P \rightleftharpoons 0  \rightleftharpoons Q
\]
This is a weakly reversible reaction network with the stoichiometric subspace given by:
\begin{equation} \notag
S = \text{span} \{(-1,-1,1,-1)^{\intercal}, \ (1,-1,1,-1)^{\intercal}, \ (0,-1,0,0)^{\intercal}, \ (0,0,-1,0)^{\intercal} \},
\end{equation}
It is a three-dimensional stoichiometric subspace. The network has deficiency $\delta = 7 - 3 - 3 =1$. 
\end{example}

\medskip

\noindent From inequality in Equation~\eqref{delta inequality}, deficiency one networks can be classified into the following types \footnote{Without loss of generality, we always assume $\delta_{1} = \ldots = \delta_{\ell-1} =0, \ \delta_{\ell} = 1$ in Type I networks, and $\delta_{1} = \ldots = \delta_{\ell} =0$ in Type II networks in the rest of this paper.}:

\begin{itemize}
\item $\delta = 1 = \delta_1 + \delta_2 + \cdots + \delta_{\ell}$.
We call this a \defi{Type I} network.

\item $\delta = 1 > \delta_1 + \delta_2 + \cdots + \delta_{\ell}$.
We call this a \defi{Type II} network.
\end{itemize} 

Weakly reversible deficiency one networks for which $\delta = 1 = \delta_1 + \delta_2 + \cdots + \delta_{\ell}$ (Type I) 
fall into the regime of the Deficiency one Theorem, which we state below.

\begin{theorem}[Deficiency One Theorem,
\cite{feinberg1987chemical,feinberg1995existence}]
\label{thm:deficiency_one_theorem}
Consider a reaction network $G$ consisting of $\ell$ linkage classes $L_1,L_2,\cdots, L_{\ell}$. Let us assume that $G$ satisfies the following conditions:
\begin{enumerate}
\item $\delta_i\leq 1$.
\item $\sum\limits_{i=1}^{\ell} \delta_i = \delta$.
\item Each linkage class $L_i$ contains exactly one terminal strongly connected component.
\end{enumerate}
If there exists a $\bk$ for which the mass-action system $(G,\bk)$ possesses a positive equilibrium, then every stoichiometric compatibility class has exactly one positive equilibrium. If $G$ is weakly reversible, then for all values of $\bk$ the mass-action system $(G,\bk)$ possesses a positive equilibrium. 
\end{theorem}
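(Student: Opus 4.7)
The plan is to reduce the equilibrium condition $\bY \bA_{\bk}\,\bx^{\bY} = \mathbf{0}$ to a system that decouples across linkage classes, and then solve each piece using the specific deficiency of that linkage class. The starting observation is that a positive steady state is exactly a positive vector $\bx$ with $\bx^{\bY} \in \ker(\bY \bA_{\bk})$; by Theorem~\ref{thm:supp_terminal_linkage} and hypothesis~(3), $\ker(\bA_{\bk})$ has a non-negative basis $\{\bc_1, \ldots, \bc_{\ell}\}$, one $\bc_q$ per linkage class $L_q$.

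First, I would establish a decoupling across linkage classes. The hypothesis $\sum_i \delta_i = \delta$ is the equality case of the bound~\eqref{delta inequality}, which I would use to conclude that the stoichiometric subspaces $S(L_1), \ldots, S(L_{\ell})$ are linearly independent. Since $\bA_{\bk}$ is block-diagonal across linkage classes, the residual $\bY \bA_{\bk}\,\bx^{\bY}$ splits as $\sum_i \bY_{L_i} \bA_{\bk}|_{L_i}\,\bx^{\bY|_{L_i}}$ with the $i$-th summand lying in $S(L_i)$; linear independence forces each summand to vanish separately. The stoichiometric compatibility-class condition $\bx - \bx_0 \in S = \bigoplus_i S(L_i)$ splits likewise, reducing the theorem to a per-linkage-class statement.

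Second, I would analyze each block using the rank computation $\dim \ker(\bY_{L_i} \bA_{\bk}|_{L_i}) = 1 + \delta_i$. For each $L_i$ with $\delta_i = 0$ this kernel coincides with $\RR\,\bc_i$, so $\bx^{\bY|_{L_i}}$ must be a positive scalar multiple of $\bc_i$, and Birch's theorem on toric varieties then gives exactly one positive solution in each compatibility class of $L_i$, existing unconditionally when $L_i$ is weakly reversible. For the unique deficiency-one linkage class $L_{\ell}$ the kernel is two-dimensional, introducing an extra positive free parameter, and intersecting with the compatibility-class slice leaves a single polynomial equation in one positive real variable.

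The main obstacle, and the technical core of Feinberg's original argument that I would follow, is proving that this single-variable polynomial equation has exactly one positive root under the stated hypotheses. I would fix a spanning tree of the unique terminal strongly connected component of $L_{\ell}$, use a matrix-tree expansion to write $\bc_{\ell}$ and the extra kernel direction as explicit rational functions of the rate constants and of the free parameter, and then analyze the sign structure of the image in $S(L_{\ell})$ along the compatibility-class slice; the terminality and deficiency-one hypotheses conspire to make the associated real function strictly monotone in the free parameter, forcing at most one positive root. For existence under weak reversibility, Proposition~\ref{prop:positive_vector_kernel} supplies a strictly positive $\bc \in \ker(\bA_{\bk})$, and a final Birch-theorem argument produces some $\bx^* \in \RR_{>0}^n$ with $(\bx^*)^{\bY} = \bc$, which is a positive equilibrium and, by the uniqueness just proved, the unique one in its compatibility class.
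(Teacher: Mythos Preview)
The paper does not contain a proof of this theorem: it is stated as a cited result from Feinberg's original papers~\cite{feinberg1987chemical,feinberg1995existence}, and the surrounding text merely uses it as a black box (together with Theorem~\ref{thm:boros}) to draw conclusions about Type~I networks. So there is no ``paper's own proof'' against which to compare your proposal.

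That said, two points in your sketch would not survive as written. First, you speak of ``the unique deficiency-one linkage class $L_{\ell}$,'' but the hypotheses $\delta_i \leq 1$ and $\sum_i \delta_i = \delta$ allow several linkage classes to have deficiency one whenever $\delta > 1$; the reduction must handle each such block, not just one. Second, your existence argument in the weakly reversible case is a genuine gap: Proposition~\ref{prop:positive_vector_kernel} gives a positive $\bc \in \ker(\bA_{\bk})$, but Birch's theorem does \emph{not} produce an $\bx^* \in \RR_{>0}^n$ with $(\bx^*)^{\bY} = \bc$ unless $\log \bc$ lies in the row space of $\bY$, which is not automatic. Indeed, the existence of positive equilibria for weakly reversible systems of arbitrary deficiency is a separate, nontrivial result (Theorem~\ref{thm:boros} in this paper, due to Boros), and Feinberg's own proof of the existence clause in the Deficiency One Theorem proceeds differently, via a careful analysis of the sign structure along the one-parameter family you identified rather than by exhibiting a complex-balanced state.
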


We also state a theorem~\cite{boros2019existence} that guarantees the existence of positive steady states for weakly reversible  systems.

\begin{theorem}[\cite{boros2019existence}]
\label{thm:boros}
For weakly reversible mass-action systems, there exists a positive steady state within each stoichiometric compatibility class. 
\end{theorem}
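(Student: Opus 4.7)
The plan is to prove existence via a topological / degree-theoretic argument applied to the mass-action vector field restricted to each stoichiometric compatibility class separately. Fix $\bx_0 \in \RR^n_{>0}$ and set $\mathcal{P} := (\bx_0 + S) \cap \RR^n_{>0}$. Since each reaction vector lies in $S$, the field $f(\bx) := \bY \bA_{\bk} \bx^{\bY}$ takes values in $S$ for every $\bx \in \RR^n_{>0}$, so we may view $f$ as a tangent vector field on the relatively open set $\mathcal{P}$ inside the affine space $\bx_0 + S$. It then suffices to exhibit a zero of $f$ in $\mathcal{P}$.

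The heart of the plan is to construct a compact subset $K \subset \mathcal{P}$ (relative to $\bx_0 + S$) on whose relative boundary $f$ points strictly inward. Once such a $K$ is available, a Brouwer-degree calculation (equivalently, a Poincar\'e--Hopf argument after retracting $K$ onto a ball) forces $f$ to vanish somewhere in the interior of $K$. Two consequences of weak reversibility drive the construction. First, by Proposition~\ref{prop:positive_vector_kernel} the kernel of $\bA_{\bk}$ contains a positive vector $\bc$, which underpins a pseudo-Helmholtz-type function of the form $H(\bx) = \sum_{i=1}^{n} \bx_i \bigl( \log(\bx_i / \bc_i) - 1 \bigr)$ whose sublevel sets intersected with $\mathcal{P}$ are bounded, thereby controlling escapes to infinity. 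Second, weak reversibility implies that the network is strongly endotactic, which is what I would use to rule out $f$ pointing outward along the portion of $\partial K$ adjacent to the coordinate hyperplanes of $\RR^n_{>0}$.

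The main obstacle will be the boundary analysis near $\partial \RR^n_{>0}$: a priori some coordinates of $\bx$ could drift to zero, and the Lyapunov-type bound alone does not preclude this. Making the strongly endotactic property quantitative enough to guarantee strict inward pointing along the relevant portion of $\partial K$ requires a careful comparison of the dominant monomials of $f$ coming from source vertices activated near each face of the orthant; this is the technical crux. An alternative route, closer in spirit to \cite{boros2019existence}, would be a homotopy argument: start from a complex-balanced rate vector $\bk^{(0)}$ (which exists in $\RR_{>0}^E$ because $G$ is weakly reversible, and whose equilibria in $\mathcal{P}$ are supplied by the Deficiency Zero Theorem), deform continuously to the given $\bk$, and use degree theory to track a positive equilibrium along the path; the burden then shifts to showing no equilibrium escapes to $\partial \RR^n_{>0}$ or to infinity during the deformation.
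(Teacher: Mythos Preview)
The paper does not prove Theorem~\ref{thm:boros} at all; it is quoted verbatim from \cite{boros2019existence} and used as a black box throughout Section~\ref{sec:pointed_cone}. So there is no ``paper's own proof'' to compare against, and your proposal is an attempt to reconstruct a result the authors simply import.

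That said, your sketch contains two genuine errors worth flagging. First, the vector $\bc$ you obtain from Proposition~\ref{prop:positive_vector_kernel} lives in $\ker(\bA_{\bk}) \subset \RR^m$, i.e.\ it is indexed by \emph{vertices}, not by \emph{species}; writing $H(\bx) = \sum_{i=1}^n \bx_i(\log(\bx_i/\bc_i)-1)$ therefore does not type-check. The classical pseudo-Helmholtz function uses a complex-balanced equilibrium $\bar{\bx}\in\RR^n_{>0}$ in place of $\bc$, and you have no such equilibrium available a priori---that is precisely what you are trying to prove exists. Second, and more seriously, weak reversibility does \emph{not} imply strong endotacticity; it only implies endotacticity. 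A union of two disjoint reversible pairs with linearly independent reaction vectors is weakly reversible but not strongly endotactic. Since your boundary analysis near $\partial\RR^n_{>0}$ leans on the strongly endotactic property, this step fails as written.

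Your alternative homotopy route is closer to what Boros actually does, though the citation of the Deficiency Zero Theorem is slightly off: the relevant input is that any weakly reversible network admits \emph{some} choice of rate constants for which it is complex balanced (Horn), and complex-balanced systems have positive equilibria in every compatibility class (Horn--Jackson), independent of deficiency. The hard part---ruling out escape of equilibria to the boundary or to infinity along the homotopy---is exactly where Boros's argument does its real work, and your proposal correctly identifies this as the crux without resolving it.
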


Using Theorem~\ref{thm:boros} in conjunction with the Deficiency one Theorem, we conclude that for any weakly reversible deficiency one network of Type I, there exists a unique equilibrium within each stoichiometric compatibility class for all values of the rate constants $\bk$.

\medskip

For weakly reversible deficiency one networks of Type II, all linkage classes have deficiency zero and they possess the following geometric property:

\begin{proposition}[{\cite[Theorem 9]{craciun2019realizations}}]
\label{prop_def_zero_affine}
Consider a reaction network $G$. Let $L_1$ be a linkage class of $G$. Then $L_1$ has deficiency zero if and only if its vertices are affinely independent.
\end{proposition}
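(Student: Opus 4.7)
The plan is to unpack the deficiency definition and reduce the claim to the standard fact that a set $\{\by_1,\ldots,\by_p\} \subset \RR^n$ is affinely independent if and only if the translated vectors $\{\by_2 - \by_1,\ldots,\by_p - \by_1\}$ are linearly independent. By definition, $\delta_1 = |V_1| - 1 - \dim S(V_1)$, so it suffices to show that $\dim S(V_1) = |V_1| - 1$ exactly when $V_1$ is affinely independent.

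First I would fix any $\by_1 \in V_1$ and prove the identity
\begin{equation} \notag
S(V_1) = \mathrm{span}\{\by_i - \by_1 : \by_i \in V_1,\ i \neq 1\}.
\end{equation}
The inclusion $\supseteq$ uses that $L_1$ is a connected component of $G$ (as an undirected graph): for any target $\by_i \in V_1$ there is a path $\by_1 = \by_{j_0}, \by_{j_1}, \ldots, \by_{j_r} = \by_i$ along which each consecutive pair is joined by a directed edge in either orientation. Since $S(V_1)$ is a vector subspace, it contains both $\by_{j_{k+1}} - \by_{j_k}$ and $\by_{j_k} - \by_{j_{k+1}}$ whenever one is a reaction vector, so telescoping gives $\by_i - \by_1 \in S(V_1)$. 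The reverse inclusion $\subseteq$ is immediate from rewriting each generator as $\by' - \by = (\by' - \by_1) - (\by - \by_1)$.

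Once this identity is in hand, the conclusion is the linear algebra fact recalled above: $\dim S(V_1) \leq |V_1|-1$ with equality precisely when $\by_2 - \by_1,\ldots,\by_{|V_1|} - \by_1$ are linearly independent, which is the definition of affine independence of $V_1$. Substituting back into $\delta_1 = |V_1| - 1 - \dim S(V_1)$ yields the biconditional.

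The only real subtlety is the asymmetry between the directed definition of $S(V_1)$ and the undirected nature of a linkage class; this is handled entirely by the observation that $S(V_1)$ is closed under negation. No hypothesis about weak reversibility or terminal components is needed, so the argument applies to any linkage class $L_1$ of $G$.
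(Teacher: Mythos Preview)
Your argument is correct. The paper itself does not supply a proof of this proposition; it simply imports the result from~\cite{craciun2019realizations}, so there is nothing to compare against. Your reduction of $\delta_1 = 0$ to $\dim S(V_1) = |V_1| - 1$, together with the identification $S(V_1) = \mathrm{span}\{\by_i - \by_1 : i \neq 1\}$ via connectedness and telescoping, is the standard and correct route; the observation that closure under negation bridges the directed edges and the undirected linkage class is exactly the point that needs to be made.
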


Recall that a set $X$ is a \textbf{polyhedral cone} if $X = \{\bx: M \bx \leq \textbf{0} \text{ for some matrix } M \}$. Such a cone is convex. It is \textbf{pointed}, or \textbf{strongly convex} if it does not contain a positive dimensional linear subspace. A pointed polyhedral cone admits a unique (up to scalar multiple) minimal set of generators, and these generating vectors are called \emph{extreme vectors}~\cite{2016david}.

\begin{lemma}
\label{lem: pointed cone and generator} 

Consider a mass-action system $(G,\bk)$ with vertices $\{\by_i\}_{i=1}^m$. Let $\bW$ be the matrix of net reaction vectors of $G$, then we have: 
\begin{enumerate}[label=(\alph*)]
\item $\text{ker}(\bW) \cap \RR^m_{\geq 0}$ is a pointed polyhedral cone. 

\item There exists the minimal set of generators for $\text{ker}(\bW) \cap \RR^m_{\geq 0}$.
\end{enumerate}
\end{lemma}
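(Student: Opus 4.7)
The plan is to treat the two statements separately; both will follow from standard facts about polyhedral cones with almost no input specific to reaction networks beyond the definition of $\bW$.

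For part (a), I would first express $\text{ker}(\bW)\cap\RR^m_{\geq 0}$ as the solution set of finitely many homogeneous linear inequalities. Writing
\begin{equation*}
\text{ker}(\bW) = \{\bx \in \RR^m : \bW\bx \leq \mathbf{0} \text{ and } -\bW\bx \leq \mathbf{0}\},
\qquad
\RR^m_{\geq 0} = \{\bx \in \RR^m : -\bx \leq \mathbf{0}\},
\end{equation*}
exhibits their intersection as a set defined by finitely many closed half-spaces through the origin, which is by definition a polyhedral cone. To establish pointedness, I would suppose that both $\bx$ and $-\bx$ lie in $\text{ker}(\bW)\cap\RR^m_{\geq 0}$; then the non-negativity conditions $\bx \geq \mathbf{0}$ and $-\bx \geq \mathbf{0}$ force $\bx = \mathbf{0}$, so the cone contains no nontrivial linear subspace.

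For part (b), I would invoke the Minkowski--Weyl theorem on polyhedral cones: every pointed polyhedral cone in $\RR^m$ is the conical hull of its finitely many extreme rays, and these extreme rays form the unique (up to positive scaling on each ray) minimal set of generators. Applying this to the cone obtained in part (a) gives the claim, and the citation \cite{2016david} already in the paragraph preceding the lemma supplies the needed reference.

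The main conceptual point, rather than any real obstacle, is simply flagging that this lemma is essentially a bookkeeping observation: it licenses all later arguments to speak of the extreme rays of $\text{ker}(\bW)\cap\RR^m_{\geq 0}$ as a well-defined finite object. I would keep the write-up short and let the Minkowski--Weyl theorem do the work in (b), since any attempt to reprove it from scratch would dwarf the actual content of the lemma.
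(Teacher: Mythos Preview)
Your proposal is correct and matches the paper's proof essentially line for line: the paper also writes the cone as the solution set of $\bW\nu \geq \mathbf{0}$, $-\bW\nu \geq \mathbf{0}$, $\bI_m\nu \geq \mathbf{0}$, observes that any cone inside $\RR^m_{\geq 0}$ is pointed, and then cites Minkowski--Weyl for the minimal generating set. Your pointedness argument is stated slightly more explicitly than the paper's one-line remark, but the content is identical.
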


\begin{proof}

\begin{enumerate}[label=(\alph*)]

\item It is clear that the set $\text{ker}(\bW) \cap \RR^m_{\geq 0}$ is the solution to $\bW \nu \geq \textbf{0}, -\bW \nu \geq \textbf{0}$, and $\bI_{m} \nu\geq\textbf{0}$, and the set is a polyhedral cone. From the definition, a cone contained in the positive orthant $\RR^m_{\geq 0}$ is always pointed.
Therefore, we deduce that $\text{ker}(\bW) \cap \RR^m_{\geq 0}$ is a pointed polyhedral cone.

\item Since $\text{ker}(\bW)\cap\mathbb{R}^m_{\geq 0}$ is a pointed cone, by the Minkowski-Weyl theorem~\cite{2016david, rockafellar1970convex}, we have 
\begin{equation}
\text{ker}(\bW)\cap\mathbb{R}^m_{\geq 0} 
= \sum\limits_{j=1}^r \zeta_i \bd_i,
\end{equation}
where $\{\bd_j\}_{j=1}^r $ is the \emph{unique} (up to scalar multiple) minimal set of generators of the cone $\text{ker}(\bW)\cap\mathbb{R}^m_{\geq 0}$. 
\end{enumerate}
\end{proof}

\noindent Furthermore, using the rank-nullity theorem, we have 
\begin{equation} \notag
\text{dim}(\text{ker}(\bW)) = \text{dim}(\text{ker}(\bA_{\bk})) + \text{dim}(\text{ker}(\bY)\cap\text{Im}(\bA_{\bk})).
\end{equation}
Thus the minimal set of generators can be divided into two groups. The next definition illustrates this point.

\begin{definition}[\cite{conradi2007subnetwork}]

Consider a mass-action system $(G,\bk)$ and let $\bW$ be the matrix of net reaction vectors of $G$. An extreme vector $\bd_i$ of the cone $\text{ker}(\bW)\cap\mathbb{R}^m_{\geq 0}$ is called
\begin{enumerate}
\item a \defi{cyclic generator}, if $\bd_i\in\text{ker}(\bA_{\bk})$.
\item a \defi{stoichiometric generator}, if $\bA_{\bk} \bd_i \in \text{ker}(\bY) \backslash 
\{\mathbf{0}\}$.
\end{enumerate}
\end{definition}

Here we give an example where a reaction network possesses both cyclic and stoichiometric generators.

\begin{example}
\label{ex:stoichiometric_gen}

Consider the network shown in Figure \ref{fig:stoichiometric_gen}. This weakly reversible reaction network has two linkage classes, and the deficiency of the entire network is one (i.e. $\delta = 1$). Moreover, the net reaction vector matrix follows:
\begin{equation}
\bW = \begin{pmatrix}
1 & 1   & -2	 & 1 & -1\\
1  & -1 & 0   & 0  & 0 
\end{pmatrix},
\end{equation}
and
\begin{equation}
\text{ker}(\bW)) = \text{span} \big\{ (1,1,1,0,0)^{\intercal}, \ (1,1,0,-2,0)^{\intercal}, \
(1,1,0,0,2)^{\intercal} \big\}.
\end{equation}
Therefore, we can compute the minimal set of generators of $\text{ker}(\bW)\cap\mathbb{R}^5_{\geq 0}$:
\begin{enumerate}
    \item[(i)] Cyclic generators: 
\begin{equation}
\bd_1 = (1,1,1,0,0)^{\intercal}, \ \ 
\bd_2 = (0,0,0,1,1)^{\intercal}.
\end{equation} 

\item[(ii)] Stoichiometric generators:
\begin{equation}
\bd_3 = (1,1,0,0,2)^{\intercal}, \ \ 
\bd_4 = (0,0,1,2,0)^{\intercal}.
\end{equation} 
\end{enumerate}

\begin{figure}[H]
\centering
\includegraphics[scale=0.5]{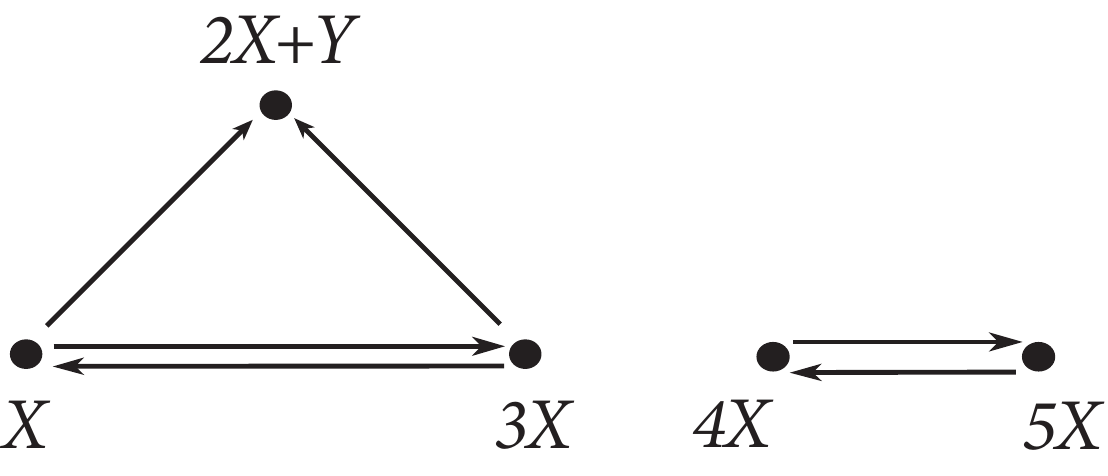}
\caption{The mass-action system corresponds to Example~\ref{ex:stoichiometric_gen}, which has both cyclic and stoichiometric generators.}
\label{fig:stoichiometric_gen}
\end{figure} 
\end{example}

In general, both cyclic and stoichiometric generators can be studied by flux mode analysis.  
Further, Conradi et al. \cite{conradi2007subnetwork} defined subnetworks generated by stoichiometric generators, and showed that under some  conditions if these subnetworks exhibit multistationarity, then so does the original network.

As remarked before, weakly reversible deficiency one realizations of Type I satisfy the conditions of the Deficiency One Theorem. This implies that there exists a unique equilibrium within each stoichiometric compatibility class for all values of the rate constants $\bk$ of these realizations. Weakly reversible deficiency one realizations of Type II are also important since the subnetworks generated by the stoichiometric generators can help answer questions about multistationarity. It is therefore important to identify and analyze weakly reversible deficiency one realizations.

\section{The pointed cone \texorpdfstring{$\text{ker}(\bW)\cap\mathbb{R}^m_{\geq 0}$}{kerwRm>0}}
\label{sec:pointed_cone}

The goal of this section is to analyze the pointed cone $\text{ker}(\bW)\cap\mathbb{R}^m_{\geq 0}$ for weakly reversible deficiency one reaction networks. Specifically, we focus on the extreme vectors of $\text{ker}(\bW)\cap\mathbb{R}^m_{\geq 0}$.

\begin{lemma} \label{lem:wr_ker support}
Consider a weakly reversible mass-action system $(G,\bk)$ with vertices $\{\by_i\}_{i=1}^m$.
Let $\bW$ be the matrix of net reaction vectors of $G$, and $\{\bd_1, \ldots, \bd_r\}$ be the minimal set of generators of $\text{ker}(\bW) \cap \mathbb{R}^m_{\geq 0}$, then 
\begin{equation} \label{lem:supp_whole} 
\bigcup\limits_{i=1}^r \text{supp}(\bd_i) = [m].
\end{equation}
\end{lemma}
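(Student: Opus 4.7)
The plan is to exploit the fact that weak reversibility produces a \emph{strictly positive} vector in $\ker(\bA_{\bk})$, and then deduce that every index $k \in [m]$ must be hit by the support of at least one extreme generator of the cone.

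First, I would invoke Proposition~\ref{prop:positive_vector_kernel}: since $G$ is weakly reversible, there exists a vector $\bv \in \ker(\bA_{\bk})$ with $\bv \in \mathbb{R}^m_{>0}$. Next, I would recall the identity $\bW = \bY \bA_{\bk}$ mentioned just after Equation~\eqref{eq:mass_action_net_reaction}, which immediately gives $\bW \bv = \bY \bA_{\bk} \bv = \mathbf{0}$, so $\bv \in \ker(\bW) \cap \mathbb{R}^m_{>0}$. In particular, $\text{supp}(\bv) = [m]$.

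Then I would apply Lemma~\ref{lem: pointed cone and generator}: since $\{\bd_1, \ldots, \bd_r\}$ is the minimal set of generators of the pointed cone $\ker(\bW) \cap \mathbb{R}^m_{\geq 0}$, we can write
\begin{equation} \notag
\bv = \sum_{i=1}^r \zeta_i \bd_i,
\end{equation}
with coefficients $\zeta_i \geq 0$ and generators $\bd_i \in \mathbb{R}^m_{\geq 0}$. Fix any index $k \in [m]$. Since $[\bv]_k > 0$, the nonnegative sum $\sum_{i=1}^r \zeta_i [\bd_i]_k$ is strictly positive, which forces at least one index $i$ with $\zeta_i > 0$ and $[\bd_i]_k > 0$. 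Hence $k \in \text{supp}(\bd_i) \subseteq \bigcup_{j=1}^r \text{supp}(\bd_j)$. As $k$ was arbitrary, we obtain $[m] \subseteq \bigcup_{j=1}^r \text{supp}(\bd_j)$; the reverse inclusion is immediate from $\bd_j \in \mathbb{R}^m$.

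There is no real obstacle here — the entire argument rests on locating a strictly positive element of $\ker(\bW)$, which weak reversibility hands to us through Proposition~\ref{prop:positive_vector_kernel}. The only subtle point worth stating explicitly in the write-up is the factorization $\bW = \bY \bA_{\bk}$, so that a positive kernel vector of the Kirchhoff matrix lifts to a positive kernel vector of $\bW$; without this, the conclusion about supports would not follow.
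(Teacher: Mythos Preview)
Your proof is correct and follows essentially the same approach as the paper: both obtain a strictly positive vector in $\ker(\bW)$ via Proposition~\ref{prop:positive_vector_kernel} and the factorization $\bW = \bY \bA_{\bk}$, and then argue that full support of this vector forces the union of the generators' supports to be $[m]$. The only cosmetic difference is that the paper phrases the final step as a contradiction (if some index $j$ were missed by every generator then every element of the cone would vanish at $j$), whereas you argue directly by expanding the positive vector as a conical combination of the $\bd_i$.
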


\begin{proof}
For contradiction, assume there exists $j \in [m]$, such that $j \notin \bigcup\limits_{i=1}^r \text{supp}(\bd_i)$.  Then for any $\bv = (v_1, \ldots, v_m)^{\intercal} \in\text{ker}(\bW)\cap\mathbb{R}^m_{\geq 0}$, we obtain that
\begin{equation} \label{vj = 0}
\bv_j=0.
\end{equation}
Since $(G,\bk)$ is a weakly reversible mass-action system, by Proposition~\ref{prop:positive_vector_kernel} there exists a positive vector in the kernel of the Kirchoff matrix $\bA_{\bk}$. 
Note that weakly reversibility indicates $\bW = \bY\bA_{\bk}$. Thus we have
\begin{equation} \notag
\text{ker}(\bA_{\bk}) \subseteq \text{ker}(\bY\bA_{\bk}) = \text{ker}(\bW). 
\end{equation}
This implies the existence of a positive vector in $\text{ker}(\bW)\cap\mathbb{R}^m_{\geq 0}$, contradicting Equation~\eqref{vj = 0}.
\end{proof}

\begin{lemma}[\cite{def_one}] \label{lem:wr_ker}
Consider a weakly reversible mass-action system $(G,\bk)$ with vertices $\{\by_i\}_{i=1}^m$ and stoichiometric subspace $S$. Let 
$\bW$ be the matrix of net reaction vectors of $G$, then
\begin{equation} \label{lem:image_stoich} 
\text{Im}(\bW) = S.
\end{equation} 
\end{lemma}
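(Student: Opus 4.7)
The plan is to establish both inclusions $\text{Im}(\bW) \subseteq S$ and $S \subseteq \text{Im}(\bW)$. The forward inclusion is immediate from Definition~\ref{defn:net_reaction_vector}: each column $\bw_i$ of $\bW$ is by construction a linear combination of the reaction vectors $\by_j - \by_i$ for $\by_i\rightarrow\by_j\in E$, hence lies in $S$; since $\text{Im}(\bW)$ is the span of these columns, $\text{Im}(\bW) \subseteq S$.

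For the reverse inclusion I would exploit the factorization $\bW = \bY \bA_{\bk}$ available in the weakly reversible setting (noted just after Equation~\eqref{eq:mass_action_net_reaction}), together with the identity $\by_j - \by_i = \bY(\be_j - \be_i)$, where $\be_i \in \mathbb{R}^m$ denotes the $i$-th standard basis vector. Since the vectors $\{\by_j - \by_i \,:\, \by_i\rightarrow\by_j\in E\}$ span $S$ by definition, it then suffices to prove that $\be_j - \be_i \in \text{Im}(\bA_{\bk})$ for every edge $\by_i \rightarrow \by_j \in E$: indeed, writing $\be_j - \be_i = \bA_{\bk}\bu$ for some $\bu$ would give $\by_j - \by_i = \bY\bA_{\bk}\bu = \bW\bu \in \text{Im}(\bW)$.

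To establish $\be_j - \be_i \in \text{Im}(\bA_{\bk})$, I would pass to the orthogonal complement $\text{Im}(\bA_{\bk}) = \ker(\bA_{\bk}^{\intercal})^{\perp}$ and pin down $\ker(\bA_{\bk}^{\intercal})$ as $\text{span}\{\mathbf{1}_{L_1}, \ldots, \mathbf{1}_{L_\ell}\}$, where $\mathbf{1}_{L_k}$ denotes the indicator vector of the linkage class $L_k$. The inclusion $\supseteq$ reduces to checking $\mathbf{1}_{L_k}^{\intercal}\bA_{\bk} = \mathbf{0}^{\intercal}$, which uses the zero column sum property of $\bA_{\bk}$ together with the observation that, because $G$ is weakly reversible, every edge leaving a vertex of $L_k$ has its target in $L_k$, so the accounting of contributions closes within $L_k$. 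Equality of dimensions follows because Theorem~\ref{thm:supp_terminal_linkage}, applied to the weakly reversible $G$ (each linkage class being a terminal strongly connected component), gives $\dim\ker(\bA_{\bk}) = \ell$, hence $\dim\ker(\bA_{\bk}^{\intercal}) = \ell$ as well. Once this characterization of $\ker(\bA_{\bk}^{\intercal})$ is in hand, the two endpoints of any edge $\by_i \rightarrow \by_j$ share a linkage class, so $\langle \be_j - \be_i, \mathbf{1}_{L_k}\rangle = 0$ for every $k$, which yields $\be_j - \be_i \in \ker(\bA_{\bk}^{\intercal})^{\perp} = \text{Im}(\bA_{\bk})$ and closes the argument. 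The step I expect to be the main obstacle, though routine, is precisely this identification of $\ker(\bA_{\bk}^{\intercal})$ with the span of linkage-class indicator vectors, since it combines a direct bookkeeping check with a transposed-dimension count drawn from Theorem~\ref{thm:supp_terminal_linkage}.
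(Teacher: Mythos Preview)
The paper does not supply a proof of this lemma; it simply cites the external reference \cite{def_one} and moves on. Your argument is correct and self-contained: the forward inclusion is immediate, and for the reverse you correctly exploit the factorization $\bW = \bY\bA_{\bk}$ together with the identification $\ker(\bA_{\bk}^{\intercal}) = \text{span}\{\mathbf{1}_{L_1},\ldots,\mathbf{1}_{L_\ell}\}$, so that every edge-difference $\be_j - \be_i$ lands in $\text{Im}(\bA_{\bk})$ and hence $\by_j - \by_i \in \text{Im}(\bW)$. The dimension count via Theorem~\ref{thm:supp_terminal_linkage} is exactly the right lever, since weak reversibility makes every linkage class a terminal strongly connected component.
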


The following lemma concerns the dimension of  $\text{ker}(\bW)$
in various cases.

\begin{lemma} \label{lem:dim wr_ker}
Consider a weakly reversible mass-action system $(G,\bk)$ with vertices $\{\by_i\}_{i=1}^m$ and stoichiometric subspace $S$. Let $\bW$ be the matrix of net reaction vectors of $G$. 
\begin{enumerate}[label=(\alph*)]
\item \label{dim of kerW single linkage class} If $G$ has deficiency $\delta$ and a single linkage class (i.e. $\ell = 1$), we have
\begin{equation}
\text{dim}(\text{ker}(\bW)) = \delta+ 1.
\end{equation}
Moreover, if $\delta = 0$, then for any $\bz\in\text{ker}(\bW) \backslash \{\mathbf{0}\}$, $\text{supp}(\bz)= [m]$.

\item\label{dim of kerW deficiency one} If $G$ has deficiency one and $\ell \geq 1$ linkage classes, we have
\begin{equation}
\text{dim}(\text{ker}(\bW)) = \ell + 1.
\end{equation}
\end{enumerate}
\end{lemma}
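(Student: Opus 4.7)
The plan is to prove both parts as more-or-less direct consequences of Lemma~\ref{lem:wr_ker}, the rank-nullity theorem, and the definition of deficiency. The key observation is that for any weakly reversible network one has $\bW = \bY \bA_{\bk}$ and hence, by Lemma~\ref{lem:wr_ker}, $\mathrm{Im}(\bW) = S$. Therefore $\mathrm{rank}(\bW) = \dim(S) = s$, and rank-nullity gives
\begin{equation} \notag
\dim(\ker(\bW)) = m - s.
\end{equation}
The definition $\delta = m - \ell - s$ then rewrites this as $\dim(\ker(\bW)) = \ell + \delta$.

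For part~\ref{dim of kerW single linkage class}, I plug in $\ell = 1$ to get $\dim(\ker(\bW)) = 1 + \delta$, which is the desired equality. For the \emph{moreover} clause, assume further that $\delta = 0$, so $\dim(\ker(\bW)) = 1$. Since $G$ is weakly reversible, Proposition~\ref{prop:positive_vector_kernel} supplies a strictly positive vector $\bc \in \ker(\bA_{\bk})$; and because $\bW = \bY \bA_{\bk}$, one has $\ker(\bA_{\bk}) \subseteq \ker(\bW)$, so $\bc \in \ker(\bW)$. The kernel being one-dimensional forces every nonzero $\bz \in \ker(\bW)$ to be a nonzero scalar multiple of $\bc$, so $\mathrm{supp}(\bz) = \mathrm{supp}(\bc) = [m]$, as required.

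For part~\ref{dim of kerW deficiency one}, I simply substitute $\delta = 1$ into the identity $\dim(\ker(\bW)) = \ell + \delta$ to obtain $\dim(\ker(\bW)) = \ell + 1$.

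I do not anticipate a real obstacle here: the only nontrivial ingredient is Lemma~\ref{lem:wr_ker} (which identifies $\mathrm{Im}(\bW)$ with the stoichiometric subspace $S$), and this is already available. The rest is a bookkeeping exercise combining the deficiency formula $\delta = m - \ell - s$ with rank-nullity, plus one appeal to Proposition~\ref{prop:positive_vector_kernel} to upgrade the dimension count to a full-support statement in the deficiency-zero, single-linkage-class case.
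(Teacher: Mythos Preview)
Your proposal is correct and follows essentially the same route as the paper: both use Lemma~\ref{lem:wr_ker} to identify $\mathrm{rank}(\bW)=s$, apply rank--nullity together with the deficiency formula $\delta=m-\ell-s$, and for the full-support clause exhibit a strictly positive vector in the one-dimensional kernel. The only cosmetic difference is that the paper obtains this positive vector via Theorem~\ref{thm:boros} (taking $(\hat\bx^{\by_1},\ldots,\hat\bx^{\by_m})$ at a positive steady state), whereas you invoke Proposition~\ref{prop:positive_vector_kernel} on $\ker(\bA_{\bk})\subseteq\ker(\bW)$ directly---a slightly cleaner but equivalent choice.
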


\begin{proof}

$(a)$
Since $G$ has deficiency $\delta$ and one linkage class, we have  
\begin{equation} \notag
\dim (S) = s = m- (\delta + 1).
\end{equation} 
By Lemma~\ref{lem:wr_ker}, $\text{rank}(\bW) = \dim(\text{Im}(\bW)) = s$. Using the rank-nullity theorem, we obtain 
\begin{equation} \notag
\text{dim}(\text{ker}(\bW)) = \delta+ 1.
\end{equation}

Furthermore, if $\delta = 0$, we deduce that
\begin{equation} \label{dim ker_w delta 0}
\text{dim}(\text{ker}(\bW)) = 1.
\end{equation}
As a weakly reversible mass-action system, $(G,\bk)$ possesses a strictly positive steady state $\hat{\bx} \in \mathbb{R}^n_{>0}$ by Theorem~\ref{thm:boros}. 
Using Equation~\eqref{eq:mass_action}, we get
\begin{eqnarray} \label{eq:ker_w delta 0}
\sum\limits_{j\in [m]}{\hat{\bx}}^{\by_j}\sum\limits_{\by_j\to \by_j' \in \mathcal{R}} k_{\by_j\to \by_j'}(\by_j'- \by_j) = \sum\limits_{j\in [m]}{\hat{\bx}}^{\by_j} \bw_j = 0. 
\end{eqnarray}
Note that $({\hat{\bx}}^{\by_1}, {\hat{\bx}}^{\by_2},\ldots, {\hat{\bx}}^{\by_m}) \in \mathbb{R}^m_{>0}$, and it spans $\text{ker}(\bW)$ due to Equation~\eqref{dim ker_w delta 0}. 

\medskip

$(b)$
Since the deficiency of $G$ is one, we get 
\begin{equation} \notag
\dim (S) = s = m - \ell - \delta = m - (\ell + 1).
\end{equation}
From Lemma~\ref{lem:wr_ker}, we conclude that 
\begin{equation} \notag
\text{dim}(\text{ker}(\bW)) = m - \dim(\text{Im}(\bW)) = m - \dim (S) = \ell + 1.
\end{equation}
\end{proof}

Here we start with the minimal set of generators of $\text{ker}(\bW) \cap \mathbb{R}^m_{\geq 0}$, when weakly reversible mass-action systems contain a single linkage class.

\begin{lemma} \label{lem:deficiency_one_cases part1}

Consider a weakly reversible mass-action system $(G,\bk)$ that has deficiency $\delta$ and a single linkage class $L = \{ \by_1, \ldots, \by_m \}$. 
Let $\bW$ be the matrix of net reaction vectors of $G$, and $\{\bd_1, \ldots, \bd_r\}$ be the minimal set of generators of $\text{ker}(\bW) \cap \mathbb{R}^m_{\geq 0}$, then 
\begin{equation} \label{eq: generator on deficiency}
r \geq \delta + 1.
\end{equation}
Moreover, if $\delta = 1$, then $r = 2$.
Assume $\{\bd_1,\bd_2\}$ is the minimal set of generators, then 
\begin{equation}
\label{support of deficiency_one_cases part1}
\text{supp}(\bd_1) \subsetneq [m], \ \
\text{supp}(\bd_2) \subsetneq [m], \ \
\text{supp}(\bd_1) \cup \text{supp}(\bd_2) = [m].
\end{equation}
\end{lemma}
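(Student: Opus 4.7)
The plan is to combine the dimension count $\dim(\text{ker}(\bW)) = \delta + 1$ from Lemma \ref{lem:dim wr_ker}(a) with the face structure of the pointed cone $C := \text{ker}(\bW) \cap \mathbb{R}^m_{\geq 0}$.

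First I would establish the inequality $r \geq \delta + 1$. The key observation is that $C$ is full-dimensional inside $\text{ker}(\bW)$, i.e.\ $\text{span}(C) = \text{ker}(\bW)$. To see this, weak reversibility together with Proposition \ref{prop:positive_vector_kernel} produces a strictly positive vector $\bv \in \text{ker}(\bA_{\bk}) \subseteq \text{ker}(\bY\bA_{\bk}) = \text{ker}(\bW)$, which lies in the interior of $\mathbb{R}^m_{\geq 0}$. Given any $\bu \in \text{ker}(\bW)$, for sufficiently small $\epsilon > 0$ the perturbation $\bv + \epsilon \bu$ still has all positive coordinates and so lies in $C$; then $\bu = \epsilon^{-1}\bigl((\bv + \epsilon\bu) - \bv\bigr) \in \text{span}(C)$. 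Since any pointed polyhedral cone requires at least $\dim(\text{span}(C))$ extreme rays to generate it conically, we conclude $r \geq \dim(\text{ker}(\bW)) = \delta + 1$.

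Next I would handle the $\delta = 1$ case. By the step above, $C$ is then a $2$-dimensional pointed polyhedral cone, hence a planar wedge bounded by exactly two extreme rays, which forces $r = 2$. For the support statements, I would invoke the standard face-support correspondence for polyhedral cones: the minimal face of $C$ containing an extreme ray $\bd_i$ is $\{\bv \in C : v_j = 0 \text{ for all } j \notin \text{supp}(\bd_i)\}$, whose linear dimension equals that of $\text{ker}(\bW) \cap \{\bv : v_j = 0 \text{ for } j \notin \text{supp}(\bd_i)\}$. Being a $1$-dimensional face, this dimension is $1$; but if $\text{supp}(\bd_i) = [m]$, the subspace in question would be all of $\text{ker}(\bW)$, whose dimension is $\delta + 1 = 2$, a contradiction. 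Hence $\text{supp}(\bd_i) \subsetneq [m]$ for $i = 1, 2$. The final identity $\text{supp}(\bd_1) \cup \text{supp}(\bd_2) = [m]$ is then an immediate application of Lemma \ref{lem:wr_ker support}.

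The main obstacle is the convex-geometric bookkeeping in the second paragraph — precisely, identifying extreme rays with $1$-dimensional faces and converting a face-dimension equality into a size bound on $\text{supp}(\bd_i)$. These are standard facts about polyhedral cones, but they must be invoked carefully so that the ambient dimension $\dim(\text{ker}(\bW)) = \delta + 1$ supplied by Lemma \ref{lem:dim wr_ker}(a) really is what rules out full-support extreme rays when $\delta = 1$.
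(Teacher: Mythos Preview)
Your proposal is correct and follows essentially the same architecture as the paper: produce a strictly positive vector in $\ker(\bW)$ to see that the cone is full-dimensional there, infer $r\geq\delta+1$, specialize to a two-dimensional pointed cone when $\delta=1$, and then rule out full-support extreme rays. Two minor differences are worth noting. First, you obtain the positive vector via $\ker(\bA_{\bk})$ and Proposition~\ref{prop:positive_vector_kernel}, whereas the paper uses a positive steady state from Theorem~\ref{thm:boros}; your route is a bit more direct here. Second, for $\text{supp}(\bd_i)\subsetneq[m]$ the paper gives an elementary argument (if $\bd_1\in\mathbb{R}^m_{>0}$ then $\lambda\bd_1-\bd_2\in C$ for large $\lambda$, contradicting that $\{\bd_1,\bd_2\}$ generates $C$), while you invoke the face--support correspondence for polyhedral cones; both are valid, and the paper's version avoids appealing to general polyhedral facts.
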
 

\begin{proof}

Since $G$ has deficiency $\delta$ and one linkage class, from Lemma \ref{lem:dim wr_ker}.\ref{dim of kerW single linkage class} 
we obtain
\begin{equation} \notag
\text{dim}(\text{ker}(\bW)) = \delta +1.
\end{equation}
Using Equation~\eqref{eq:ker_w delta 0} in Lemma \ref{lem:dim wr_ker}, we set $\bd = (\bx^{\by_1}, \bx^{\by_2},\ldots, \bx^{\by_m})$ where $\bx \in \mathbb{R}^n_{>0}$ is a steady state for the system, and obtain $\bd \in \text{ker}(\bW) \cap \mathbb{R}^m_{\geq 0}$.
Then there exists a basis of $\text{ker} (\bW)$ that contains $\bd$ as follows.
\begin{equation} \notag
B = \{ \bd, \be_1, \ldots, \be_{\delta} \}.
\end{equation}
Since $\bd \in \mathbb{R}^m_{>0}$, for any weights
$\lambda_1, \ldots, \lambda_{\delta}$, we can always find a sufficiently large $\lambda > 0$, such that
\begin{equation} \notag
\sum\limits^{\delta}_{i=1} \lambda_i \be_i + \lambda \bd \in  \mathbb{R}^m_{>0}.
\end{equation}
Thus, we conclude 
\begin{equation} \notag
r \geq \text{dim}(\text{ker}(\bW))= \delta +1.
\end{equation}

Furthermore, if the system has deficiency one (i.e. $\delta = 1$), we derive that
\begin{equation} \notag
\text{dim}(\text{ker}(\bW)) = \delta +1 = 2,
\end{equation}
and thus $\text{ker} (\bW) \cap \mathbb{R}^m_{\geq 0}$ is a two-dimensional pointed cone. 
Therefore, the cone $\text{ker} (\bW) \cap \mathbb{R}^m_{\geq 0}$ must have two generators, i.e., $r=2$.

Now assume $\{\bd_1,\bd_2\}$ is the minimal set of generators of $\text{ker} (\bW) \cap \mathbb{R}^m_{\geq 0}$ when the system has deficiency one. Using $\bd = (\bx^{\by_1}, \bx^{\by_2},\ldots, \bx^{\by_m}) \in \mathbb{R}^m_{>0}$, we derive that
\begin{equation} \notag
\text{supp}(\bd_1) \cup \text{supp}(\bd_2) = [m].
\end{equation}
Suppose $\text{supp}(\bd_1) = [m]$, thus $\bd_1 \in \mathbb{R}^m_{>0}$. Then we can find a sufficiently large $\lambda > 0$, such that
\begin{equation} \notag
\bv = \lambda \bd_1 - \bd_2 \in  \text{ker} (\bW) \cap \mathbb{R}^m_{>0}.
\end{equation}
Note that $\bd_1$ and $\bd_2$ are linearly independent, this contradicts with $\{\bd_1,\bd_2\}$ being the generating set of $\text{ker} (\bW) \cap \mathbb{R}^m_{\geq 0}$. Thus, we derive that $\text{supp}(\bd_1) \subsetneq [m]$. Similarly, we can show $\text{supp}(\bd_2) \subsetneq [m]$, and conclude \eqref{support of deficiency_one_cases part1}.
\end{proof}

\begin{lemma}
\label{lem:deficiency_zero generator}

Consider a weakly reversible mass-action system $(G,\bk)$ with a single linkage class $L = \{ \by_1, \ldots, \by_m \}$, and let $\bW$ be the matrix of net reaction vectors of $G$. Then there exists a vector $\bd \in \mathbb{R}^m_{\geq 0}$ generating the cone $\text{ker}(\bW) \cap \mathbb{R}^m_{\geq 0}$ if and only if the system has deficiency zero.
\end{lemma}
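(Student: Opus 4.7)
The plan is to prove both directions by combining the dimension formula for $\ker(\bW)$ with the lower bound on the number of extreme generators established just above.

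For the ``if'' direction ($\delta = 0 \Rightarrow$ single generator), I would invoke Lemma~\ref{lem:dim wr_ker}\ref{dim of kerW single linkage class}, which asserts that $\dim(\ker(\bW)) = \delta + 1 = 1$ under the hypothesis of a single linkage class and deficiency zero. The same lemma's proof also produces an explicit positive vector in $\ker(\bW)$, namely $\bd := (\hat{\bx}^{\by_1}, \hat{\bx}^{\by_2}, \ldots, \hat{\bx}^{\by_m})^{\intercal} \in \mathbb{R}^m_{>0}$, where $\hat{\bx}$ is the positive steady state guaranteed by Theorem~\ref{thm:boros}. Since $\ker(\bW)$ is a one-dimensional subspace containing the strictly positive vector $\bd$, the intersection $\ker(\bW) \cap \mathbb{R}^m_{\geq 0}$ is precisely the ray $\{\lambda \bd : \lambda \geq 0\}$, which is generated by the single vector $\bd$.

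For the ``only if'' direction (single generator $\Rightarrow \delta = 0$), I would argue by contrapositive: suppose $\delta \geq 1$. Lemma~\ref{lem:deficiency_one_cases part1} states that for a weakly reversible mass-action system with a single linkage class and deficiency $\delta$, the minimal set of generators of $\ker(\bW) \cap \mathbb{R}^m_{\geq 0}$ has cardinality $r \geq \delta + 1 \geq 2$. Since a pointed polyhedral cone admits a \emph{unique} minimal generating set (up to positive scalar multiples of each extreme ray), the cone cannot be generated by fewer than $r$ vectors. Hence no single vector $\bd \in \mathbb{R}^m_{\geq 0}$ can generate $\ker(\bW) \cap \mathbb{R}^m_{\geq 0}$, contradicting the hypothesis.

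The argument is largely routine given the preceding lemmas; there is no real obstacle, only the need to cite the right results. The one subtle point is ensuring that ``generating the cone'' is interpreted consistently: if $\{\bd\}$ generates in the conical sense, then it contains the unique minimal generating set, forcing $r = 1$. Because the cone is pointed (Lemma~\ref{lem: pointed cone and generator}\ref{dim of kerW single linkage class}-style argument), uniqueness of the minimal set up to scaling applies, and the chain ``$r = 1 \Rightarrow \delta + 1 \leq 1 \Rightarrow \delta = 0$'' closes the implication cleanly.
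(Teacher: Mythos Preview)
Your proposal is correct and follows essentially the same approach as the paper: both directions hinge on Lemma~\ref{lem:dim wr_ker}\ref{dim of kerW single linkage class} for the ``if'' part (producing the positive steady-state vector that spans the one-dimensional kernel) and on the inequality $r \geq \delta + 1$ from Lemma~\ref{lem:deficiency_one_cases part1} for the ``only if'' part. The paper phrases the latter as the direct chain $0 \leq \delta \leq r - 1 = 0$ rather than a contrapositive, but this is a cosmetic difference.
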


\begin{proof}

First, suppose the system has deficiency zero.
From Equation~\eqref{eq:ker_w delta 0} in Lemma \ref{lem:dim wr_ker}, we set $\bd = (\bx^{\by_1}, \bx^{\by_2},\ldots, \bx^{\by_m})$ where $\bx \in \mathbb{R}^n_{>0}$ is a steady state for the system $(G,\bk)$, and obtain 
\begin{equation} \notag
\text{ker}(\bW) = \text{span}\{ \bd \}.
\end{equation}
One can check $\bd \in \mathbb{R}^m_{>0}$, and hence $\bd$ generates $\text{ker}(\bW) \cap \mathbb{R}^m_{\geq 0}$. 

On the other hand, consider a vector $\bd$ that generates $\text{ker}(\bW) \cap \mathbb{R}^m_{\geq 0}$. Using~\eqref{eq: generator on deficiency} and deficiency is non-negative, we get that
\begin{equation}
0 \leq \delta \leq 1 - 1 = 0.
\end{equation}
Thus, we conclude the deficiency of the system is zero.
\end{proof}

The remark below follows from Lemmas~\ref{lem:deficiency_one_cases part1} and~\ref{lem:deficiency_zero generator}.

\begin{remark}
\label{rmk:generator_on_deficiency_one}

Consider a weakly reversible mass-action system $(G,\bk)$ with a single linkage class $L = \{\by_i\}_{i=1}^m$. 
Let $\bW$ be the matrix of net reaction vectors of $G$.
Suppose two vectors $\bd_1, \bd_2$ form the minimal set of generators of $\text{ker}(\bW) \cap \mathbb{R}^m_{\geq 0}$, then $(G,\bk)$ has deficiency one.
\end{remark}

Next, we work on the minimal set of generators of $\text{ker}(\bW)\cap\mathbb{R}^m_{\geq 0}$ when the weakly reversible deficiency one networks have multiple linkage classes.

\begin{lemma} \label{lem:deficiency_one_cases part2}
Consider a weakly reversible deficiency one mass-action system $(G,\bk)$ of Type I that has $\ell > 1$ linkage classes, denoted by $L_1, \ldots, L_{\ell}$. Let $\bW$ be the matrix of net reaction vectors of $G$, and $\{\bW_p \}^{\ell}_{p=1}$ be the matrix of net vectors corresponding to linkage classes $\{L_p \}^{\ell}_{p=1}$, then

\begin{enumerate}[label=(\alph*)]

\item  
\begin{equation} \label{dim of deficiency_one_cases part2}
\text{dim}(\text{ker}(\bW_1)) + \cdots + \text{dim}(\text{ker}(\bW_{\ell}))  =  \text{dim}(\text{ker}(\bW)) = \ell + 1,
\end{equation}
where
\begin{equation} \notag
\text{dim}(\text{ker}(\bW_i)) =
\begin{cases}
    1, & \text{for } 1 \leq i \leq \ell-1, \\[5pt]
    2, & \text{for } i = \ell.
\end{cases}
\end{equation}
Moreover, for any $1 \leq i \leq \ell-1$ and $\bz \in \text{ker}(\bW_i) \backslash \{\mathbf{0}\}$, we have $\text{supp} (\bz) = L_i$.

\item There exist $\ell + 1$ vectors $\bd_1, \ldots, \bd_{\ell + 1}$, which form the minimal set of generators of the cone $\text{ker} (\bW)\cap\mathbb{R}^m_{\geq 0}$, such that 
\begin{equation}
\label{support of deficiency_one_cases part2 a}
\text{supp}(\bd_i) = L_i, \ \text{for } 1 \leq i \leq \ell-1,
\end{equation}
and
\begin{equation} 
\label{support of deficiency_one_cases part2 b}
\text{supp}(\bd_{\ell}) \subsetneq L_{\ell}, \ \
\text{supp}(\bd_{\ell+1}) \subsetneq L_{\ell}, \ \ 
\text{supp}(\bd_{\ell})\cup \text{supp}(\bd_{\ell +1}) = L_{\ell}.
\end{equation}
\end{enumerate}
\end{lemma}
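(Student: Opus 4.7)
The plan is to exploit the block structure of $\bW$ induced by the partition of vertices into linkage classes, together with the Type~I convention $\delta_1 = \cdots = \delta_{\ell-1} = 0$ and $\delta_\ell = 1$. After reordering vertex indices so that linkage classes are contiguous, the matrix of net reaction vectors decomposes column-wise as $\bW = [\bW_1 \mid \cdots \mid \bW_\ell]$, because weak reversibility confines every reaction (and hence every $\bw_i$) to the linkage class of its source vertex. Applying Lemma~\ref{lem:wr_ker} to each linkage class viewed in isolation as a weakly reversible subnetwork gives $\mathrm{Im}(\bW_p) = S(L_p)$. The Type~I hypothesis $\delta = \sum_p \delta_p$ is precisely the equality case of~\eqref{delta inequality}, which forces the stoichiometric subspaces $S(L_1), \ldots, S(L_\ell)$ to be linearly independent. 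Consequently $\bW\bv = \sum_p \bW_p \bv|_{L_p} = \mathbf{0}$ implies $\bW_p \bv|_{L_p} = \mathbf{0}$ for each $p$, yielding the direct-sum decomposition $\ker(\bW) = \bigoplus_{p=1}^\ell \ker(\bW_p)$ under the natural zero-extension.

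For part~(a), each linkage class $L_p$ is itself a weakly reversible mass-action system with a single linkage class and deficiency $\delta_p$, so Lemma~\ref{lem:dim wr_ker}\ref{dim of kerW single linkage class} yields $\dim \ker(\bW_p) = \delta_p + 1$, which equals $1$ for $p < \ell$ and $2$ for $p = \ell$. Summing over $p$ gives $\ell + 1$, matching $\dim \ker(\bW)$ from Lemma~\ref{lem:dim wr_ker}\ref{dim of kerW deficiency one} and confirming the direct-sum decomposition dimensionally. The final clause of Lemma~\ref{lem:dim wr_ker}\ref{dim of kerW single linkage class} (the $\delta = 0$ case) further guarantees that for $p < \ell$ every nonzero $\bz \in \ker(\bW_p)$ satisfies $\mathrm{supp}(\bz) = L_p$.

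For part~(b), I would construct the generators one linkage class at a time and then zero-extend. For $p < \ell$, Lemma~\ref{lem:deficiency_zero generator} applied to $L_p$ produces a strictly positive $\bd_p$ generating $\ker(\bW_p) \cap \mathbb{R}^{|L_p|}_{\geq 0}$, so $\mathrm{supp}(\bd_p) = L_p$ after zero-extension. For $p = \ell$, Lemma~\ref{lem:deficiency_one_cases part1} applied to $L_\ell$ produces two generators $\bd_\ell, \bd_{\ell+1}$ of $\ker(\bW_\ell) \cap \mathbb{R}^{|L_\ell|}_{\geq 0}$ whose supports are proper subsets of $L_\ell$ with union $L_\ell$. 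To see that these $\ell+1$ vectors generate $\ker(\bW) \cap \mathbb{R}^m_{\geq 0}$, any $\bv$ in the cone decomposes as $\bv = \sum_p \bv|_{L_p}$ by the first paragraph, each restriction $\bv|_{L_p}$ is nonnegative and therefore lies in $\ker(\bW_p) \cap \mathbb{R}^{|L_p|}_{\geq 0}$, and each piece can be expanded in the chosen linkage-class generators. Minimality is then automatic from the support pattern: for $p < \ell$, $\bd_p$ has support $L_p$ disjoint from every other $\bd_q$, so it cannot be a nontrivial nonnegative combination of the others; and $\bd_\ell, \bd_{\ell+1}$ remain extreme in the full cone because they are extreme inside the two-dimensional sub-cone on $L_\ell$ and no other generator places any mass on $L_\ell$.

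The main obstacle is the direct-sum decomposition in the first paragraph; everything downstream is bookkeeping with the three earlier lemmas. The crucial technical input is the linear independence of the per-linkage-class stoichiometric subspaces, which relies exclusively on the Type~I equality $\delta = \sum_p \delta_p$ — without it, $\ker(\bW)$ would not split cleanly along linkage classes and the explicit support structure in~\eqref{support of deficiency_one_cases part2 a}--\eqref{support of deficiency_one_cases part2 b} would break.
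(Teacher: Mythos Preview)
Your proof is correct and follows essentially the same route as the paper: both apply Lemma~\ref{lem:dim wr_ker} linkage-class by linkage-class for part~(a), and both build the $\ell+1$ generators for part~(b) by zero-extending the per-linkage-class generators supplied by Lemmas~\ref{lem:deficiency_zero generator} and~\ref{lem:deficiency_one_cases part1}. The one organizational difference is that you front-load the direct-sum decomposition $\ker(\bW) = \bigoplus_p \ker(\bW_p)$ via linear independence of the $S(L_p)$, whereas the paper verifies the same content more computationally by writing an arbitrary $\bv$ in the basis $\{\bd_i\}$ and checking coefficient signs---but this is the same argument in slightly different packaging.
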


\begin{proof}


$(a)$ From the assumption, the system $G$ is of Type I with $\delta_{1} = \cdots = \delta_{\ell-1} =0$ and $\delta_{\ell} = 1$.
Using Lemma~\ref{lem:dim wr_ker}, we get 
\begin{equation} \notag
\text{dim}(\text{ker}(\bW_{\ell})) = \delta_{\ell} + 1 = 2, 
\ \
\text{dim}(\text{ker}(\bW_i)) = \delta_{i} + 1 = 1, \ \text{for } 1 \leq i \leq \ell-1.
\end{equation}
Further, for any $1 \leq i \leq \ell-1$ and $\bz \in \text{ker}(\bW_i) \backslash \{\mathbf{0}\}$,
\begin{equation} \notag
\text{supp}(\bz) = L_i.
\end{equation}
Note that $G$ has deficiency one, thus
$\text{dim}(\text{ker}(\bW)) = \ell + 1$,
and we derive \eqref{dim of deficiency_one_cases part2}.

\medskip


$(b)$ 
Now we construct the minimal set of generators of $\text{ker}(\bW) \cap \mathbb{R}^m_{\geq 0}$, denoted by $\{\bd_1, \ldots,\bd_{r}\}$. It will follow from the construction that $r = \ell + 1=  \text{dim}(\text{ker}(\bW))$. 

Since $(G,\bk)$ is a weakly reversible mass-action system, it possesses a strictly positive steady state $\hat{\bx}\in \mathbb{R}^n_{>0}$ by Theorem~\ref{thm:boros}. Following Equation~\eqref{eq:ker_w delta 0} in Lemma~\ref{lem:dim wr_ker}, we can build $\ell-1$ vectors $\bd_1, \ldots, \bd_{\ell - 1}$. We define $\bd_1 = (\bd_{1,1},\ldots,\bd_{1,m}) \in \text{ker}(\bW) \cap \mathbb{R}^m_{\geq 0}$, such that
\begin{eqnarray} \label{eq:defn_generators_1 a}
\bd_{1,i} =  
\begin{cases}
    \hat{\bx}^{\by_i}, & \text{for } i \in L_1, \\
    0, & \text{for } i \notin L_1. 
\end{cases}
\end{eqnarray} 
It is clear that $\text{supp}(\bd_1) = L_1$. Analogously, for $i=1, \ldots, \ell - 1$, we can define $\bd_i$ corresponding to the linkage classes $L_i$ with $\text{supp}(\bd_i) = L_i$. 

Note that $G$ is of Type I and the linkage class $L_{\ell}$ has deficiency one. 
Let $L_{\ell} = \big\{ \by_{\ell_i} \big\}^{m_{\ell}}_{i = 1}$ with $m_{\ell} = |L_{\ell}|$. From Lemma~\ref{lem:deficiency_one_cases part1}, the cone $\text{dim}(\text{ker}(\bW_{\ell}))\cap\mathbb{R}^{m_{\ell}}_{\geq 0}$ has two generators, denoted by $\{ \bg_1, \bg_2 \}$. Suppose $\bg_1 = \big( \bg_{1,i} \big)_{i \in L_{\ell}}$ and   $\bg_2 = \big( \bg_{2,i} \big)_{i \in L_{\ell}}$, then we define $\bd_{\ell} = (\bd_{\ell, 1}, \ldots,\bd_{\ell, m})$, $\bd_{\ell+1} = (\bd_{\ell+1,1}, \ldots, \bd_{\ell+1,m})$ as
\begin{equation} 
\label{eq:defn_generators_1 b}
\begin{split}
\bd_{\ell,i} 
 = \begin{cases}
    \bg_{1,i}, & \text{for } i \in L_{\ell}, \\
    0, & \text{for } i \notin L_{\ell}.
    \end{cases} 
\ \ \text{and } \
\bd_{\ell + 1,i} 
 = \begin{cases}
    \bg_{2,i}, & \text{for } i \in L_{\ell}, \\
    0, & \text{for } i \notin L_{\ell}.
    \end{cases}
\end{split}
\end{equation}
Note that both $\bd_{\ell}, \bd_{\ell+1} \in \text{ker}(\bW) \cap \mathbb{R}^m_{\geq 0}$, and satisfy Equation~\eqref{support of deficiency_one_cases part2 b}.

We claim that the vectors $\bd_1, \ldots , \bd_{\ell}$ form a set of generators for $\text{ker}(\bW) \cap \mathbb{R}^m_{\geq 0}$. 
From Equations~\eqref{eq:defn_generators_1 a} and~\eqref{eq:defn_generators_1 b}, we deduce that the vectors $\{ \bd_i \}^{\ell + 1}_{i=1}$ are linearly independent. Together with $\text{dim}(\text{ker}(\bW))=\ell + 1$, we derive that the set $\{\bd_1, \ldots ,\bd_{\ell + 1}\}$ is a basis for $\text{ker}(\bW)$. Thus, any vector $\bv \in \text{ker}(\bW) \cap \mathbb{R}^m_{\geq 0}$ can be expressed as
\begin{equation} \label{linear combination 1}
\bv = a_1 \bd_1 + a_2 \bd_2 + \cdots + a_{\ell+1} \bd_{\ell+1} \in \mathbb{R}^m_{\geq 0},
\end{equation}
where $a_1, \ldots, a_{\ell+1} \in \mathbb{R}$. 
So it suffices to prove all $\{ a_i \}^{\ell+1}_{i=1}$ are non-negative. 
Recall $\{ L_i\}^{\ell}_{i=1}$ are linkage classes with $\text{supp}(\bd_i) = L_i$, and $\text{supp}(\bd_{\ell})$, $\text{supp}(\bd_{\ell+1}) \subseteq L_{\ell}$, then we obtain 
\begin{equation} \notag
a_i \geq 0, \ \text{for } i=1, \ldots, \ell-1.
\end{equation}
Moreover, we set $\hat{\bv} = a_{\ell} \bg_1 + a_{\ell+1} \bg_2$. From $\sum\limits^{\ell}_{i=1} \text{dim}(\text{ker}(\bW_i)) = \text{dim}(\text{ker}(\bW))$, we derive
\begin{equation} \notag
\hat{\bv} \in \text{ker}(\bW_{\ell})\cap\mathbb{R}^{m_{\ell}}_{\geq 0}.
\end{equation}
Since $\bg_1, \bg_2$ form the generators of the cone $\text{dim}(\text{ker}(\bW_{\ell}))\cap\mathbb{R}^{m_{\ell}}_{\geq 0}$, we have
\begin{equation} \notag
a_{\ell} \geq 0, \ \ 
a_{\ell+1} \geq 0.
\end{equation}
Therefore, we prove the claim.  

Finally, we show $\{\bd_1, \bd_2, \ldots, \bd_{\ell +1}\}$ is the minimal set of generators for $\text{ker}(\bW) \cap \mathbb{R}^m_{\geq 0}$. 
Note from Equations~\eqref{eq:defn_generators_1 a} and~\eqref{eq:defn_generators_1 b}, $\bd_{\ell + 1}$ cannot be generated by $\{\bd_i\}^{\ell}_{i=1}$, thus it suffices to show $\{\bd_1,\bd_2, \ldots ,\bd_{\ell}\}$ are all extreme vectors. 

Suppose not, there exists $1 \leq j \leq \ell$, such that $\bd_j$ is not an extreme vector.
Then we can find two vectors $\gamma, \theta \in \text{ker}(\bW) \cap \mathbb{R}^m_{\geq 0}$ and $0 < \lambda < 1$, such that
\begin{equation} \label{d_decompose_1}
\lambda \gamma + (1 - \lambda) \theta = \bd_j,
\end{equation}
where $\gamma \neq \nu \theta$ for any constant $\nu$.
From Equation~\eqref{linear combination 1}, we can express $\gamma$ and $\theta$ as the conical combination of $\{\bd_1,\bd_2, \ldots ,\bd_{\ell + 1}\}$ as
\begin{equation} \notag
\gamma = \sum\limits^{\ell+1}_{i=1} \gamma_{i}\bd_i
\ \ \text{and } \
\theta = \sum\limits^{\ell+1}_{i=1} \theta_{i} \bd_i,
\end{equation}
where  $\gamma_{i}, \ \theta_{i} \geq 0$, for $i = 1, \ldots, \ell+1$. 

If $j \neq \ell$, from $\text{supp}(\bd_j) = L_j$ and Equation~\eqref{d_decompose_1}, we derive that $\gamma_{i} = \theta_{i} = 0$ for $1 \leq i \leq \ell+1, i \neq j$. 
This implies $\gamma = \gamma_j d_j$ and $\theta = \theta_j d_j$, which contradicts with $\gamma \neq \nu \theta$. 

If $j = \ell$, we deduce that $\gamma_{i} = \theta_{i} = 0$ for $1 \leq i \leq \ell+1$ such that $i \neq j$ in a similar way. This implies $\gamma = \gamma_{\ell}\bd_{\ell}$ and $\theta = \theta_{\ell}\bd_{\ell}$, which also contradicts with $\gamma \neq \nu \theta$. 
Therefore we conclude that $\{\bd_1, \ldots, \bd_{\ell + 1}\}$ is the minimal set of generators of the cone $\text{ker}(\bW) \cap \mathbb{R}^m_{\geq 0}$.
\end{proof}

Here we illustrate an example where Lemma \ref{lem:deficiency_one_cases part2} can be verified.

\begin{example}
\label{ex:deficiency_sum_lemma}

Consider a weakly reversible deficiency one mass-action system 
shown in Figure \ref{fig:deficiency_sum_lemma}.
This reaction network has two linkage classes.  One linkage class has deficiency zero, and the other has deficiency one (i.e. $\delta_1 = 0, \ \delta_2 = 1$), and the deficiency of the entire network is one (i.e. $\delta = 1$). Therefore, we have 
\begin{equation}
    1 = \delta = \delta_1 + \delta_2.
\end{equation}
For all reactions $\by\rightarrow\by' \in E$, we assume
$k_{\by\rightarrow\by'} = 1$, and get
\begin{equation} 
\bW_1 = 
\begin{pmatrix}
1 & -1\\
0  & 0
\end{pmatrix}, \ \ 
\bW_2 = 
\begin{pmatrix}
0 &  0 & 0\\
1  &  0  &-1
\end{pmatrix}, \ \ 
\bW = 
\begin{pmatrix}
1 & -1 & 0 &  0 & 0 \\
0  & 0 & 1  &  0  & -1
\end{pmatrix}.
\end{equation}
So we can derive that
\begin{equation}
\text{ker}(\bW_1)) = \text{span} 
\left\{
\begin{pmatrix}
1 \\
1 \\
\end{pmatrix}
\right\}, \
\text{ker}(\bW_2)) = \text{span} 
\left\{ \begin{pmatrix}
0 \\
1 \\
0 \\
\end{pmatrix}, \ \ 
\begin{pmatrix}
1 \\
0 \\
1 \\
\end{pmatrix}\right\},
\end{equation} 
and
\begin{equation}
\text{ker}(\bW)) = \text{span} 
\left\{\begin{pmatrix}
1 \\
1 \\
0 \\
0 \\
0 \\
\end{pmatrix}, \ \ 
\begin{pmatrix}
0 \\
0 \\
0 \\
1 \\
0 \\
\end{pmatrix}, \ \ 
\begin{pmatrix}
0 \\
0 \\
1 \\
0 \\
1 \\
\end{pmatrix}\right\}.
\end{equation}
For any vector $\bz_1 \in \text{ker}(\bW_1) \backslash \{\mathbf{0}\}$, we have 
\begin{equation}
\text{supp}(\bz_1)= L_1. 
\end{equation}
Then, we compute the minimal set of generators of $\text{ker}(\bW)\cap\mathbb{R}^5_{\geq 0}$:
\begin{equation}
\bd_2 = 
\begin{pmatrix}
1 \\
1 \\
0 \\
0 \\
0 \\
\end{pmatrix}, \ \ 
\bd_2 = 
\begin{pmatrix}
0 \\
0 \\
0 \\
1 \\
0 \\
\end{pmatrix}, \ \ 
\bd_3 = \begin{pmatrix}
0 \\
0 \\
1 \\
0 \\
1 \\
\end{pmatrix}
\end{equation}
This shows that the number of extreme vectors: $r = 3$, and
\begin{equation}
r = \text{dim}(\text{ker}(\bW)) = \ell +1,
\end{equation}
where $\ell = 2$.
Moreover, for $q = 2, 3$,
\begin{equation}
\text{supp}(\bd_1) = L_1, \ \ 
\text{supp}(\bd_{q}) \cap L_2 \subsetneq L_2, \ \
\text{supp}(\bd_{2})\cup \text{supp}(\bd_{3}) = L_{2}.
\end{equation}
Therefore, we verify Lemma~\ref{lem:deficiency_one_cases part2}.

\begin{figure}[H]
\centering
\includegraphics[scale=0.5]{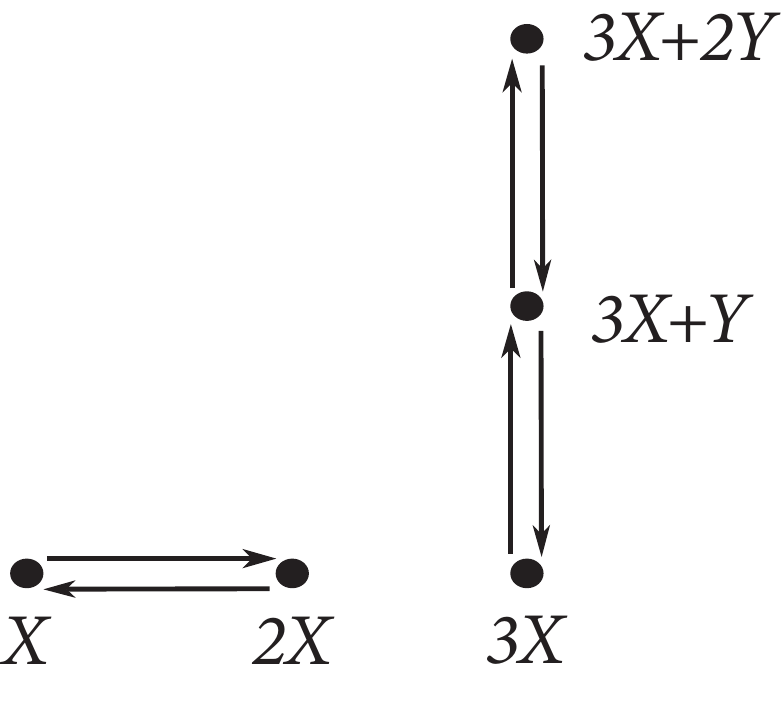}
\caption{A weakly reversible deficiency one mass-action system of Type I from Example \ref{ex:deficiency_sum_lemma}}
\label{fig:deficiency_sum_lemma}
\end{figure} 
\end{example}

\begin{lemma} \label{lem:deficiency_one_cases part3}

Consider a weakly reversible deficiency one mass-action system $(G,\bk)$ of Type II that has $\ell > 1$ linkage classes denoted by $L_1, \ldots, L_{\ell}$. Let $\bW$ be the matrix of net reaction vectors of $G$, and $\{\bW_p \}^{\ell}_{p=1}$ be the matrix of net reaction vectors corresponding to linkage classes $\{L_p \}^{\ell}_{p=1}$, then

\begin{enumerate}[label=(\alph*)]
\item[(a)]
\begin{equation} \label{dim of deficiency_one_cases part3}
\text{dim}(\text{ker}(\bW_1)) + \cdots + \text{dim}(\text{ker}(\bW_{\ell}))  =  \text{dim}(\text{ker}(\bW)) - 1 = \ell,
\end{equation}
where
\begin{equation} \notag
\text{dim}(\text{ker}(\bW_i)) = 1, \ \text{for } 1 \leq i \leq \ell.
\end{equation}
Moreover, for any $1 \leq i \leq \ell$ and $\bz \in \text{ker}(\bW_i) \backslash \{\mathbf{0}\}$, we have $\text{supp} (\bz) = L_i$.

\item[(b)] There exist $\ell + 2$ vectors $\bd_1, \ldots, \bd_{\ell + 2}$, which form the minimal set of generators of the cone $\text{ker} (\bW)\cap\mathbb{R}^m_{\geq 0}$, such that for $i = 1, \ldots, \ell$,
\begin{equation}
\label{support of deficiency_one_cases part 3}
\text{supp}(\bd_i) = L_i, \ \
\emptyset \neq \text{supp}(\bd_{\ell+1}) \cap L_i \subsetneq L_{i}, \ \
\emptyset \neq \text{supp}(\bd_{\ell+2}) \cap L_i \subsetneq L_{i}.
\end{equation}
\end{enumerate}

\end{lemma}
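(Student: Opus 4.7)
Part (a) should follow quickly from Lemma~\ref{lem:dim wr_ker}. Since $(G,\bk)$ is of Type II, every linkage class has $\delta_i=0$, so applying Lemma~\ref{lem:dim wr_ker}(a) to each restricted system $(L_i,\bk|_{L_i})$ yields $\dim(\ker(\bW_i))=1$ together with the forced support $\text{supp}(\bz)=L_i$ for any nonzero $\bz\in\ker(\bW_i)$. Since $G$ itself has deficiency one with $\ell$ linkage classes, Lemma~\ref{lem:dim wr_ker}(b) gives $\dim(\ker(\bW))=\ell+1$. Summing yields \eqref{dim of deficiency_one_cases part3}.

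For part (b), the plan is to mimic the construction in Lemma~\ref{lem:deficiency_one_cases part2}, but with the ``extra'' kernel dimension now spreading across several linkage classes rather than living inside one. Using a positive steady state $\hat{\bx}$ guaranteed by Theorem~\ref{thm:boros}, define $\bd_i$ by $(\bd_i)_j=\hat{\bx}^{\by_j}$ for $j\in L_i$ and $0$ otherwise; then $\bd_i\in\ker(\bW)\cap\mathbb{R}^m_{\geq 0}$ with $\text{supp}(\bd_i)=L_i$ exactly as in the earlier proof. These $\ell$ vectors are linearly independent but span only a codimension-one subspace of $\ker(\bW)$, so pick any $\bd^{**}\in\ker(\bW)\setminus\text{span}\{\bd_1,\ldots,\bd_\ell\}$ and set
\[
\bd_{\ell+1} \;=\; \bd^{**}+\sum_{i=1}^{\ell} M_i^{+}\bd_i, \qquad \bd_{\ell+2} \;=\; -\bd^{**}+\sum_{i=1}^{\ell} M_i^{-}\bd_i,
\]
where $M_i^{+}=\max_{j\in L_i}\bigl(-(\bd^{**})_j/(\bd_i)_j\bigr)$ and $M_i^{-}=\max_{j\in L_i}\bigl((\bd^{**})_j/(\bd_i)_j\bigr)$ are the smallest shifts that make each vector componentwise non-negative. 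By the definition of the maxima, each of $\bd_{\ell+1},\bd_{\ell+2}$ picks up a zero entry at the argmax within every linkage class where $\bd^{**}|_{L_i}$ is not a multiple of $\bd_i|_{L_i}$, which is precisely what yields the support relations in \eqref{support of deficiency_one_cases part 3}.

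The remaining work is verifying generation and minimality. For generation, any $\bv\in\ker(\bW)\cap\mathbb{R}^m_{\geq 0}$ can be written uniquely as $\bv=\sum\alpha_i\bd_i+\beta\bd^{**}$ in the basis $\{\bd_1,\ldots,\bd_\ell,\bd^{**}\}$; splitting on the sign of $\beta$ and substituting either $\bd_{\ell+1}$ or $\bd_{\ell+2}$ reexpresses $\bv$ as a non-negative combination of $\{\bd_1,\ldots,\bd_{\ell+2}\}$ because the residuals $\alpha_i-|\beta|M_i^{\pm}$ are forced to be non-negative on the support of $\bv$. The $\bd_i$ are immediately extreme because of their pairwise-disjoint supports. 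The main obstacle is proving extremality of $\bd_{\ell+1}$ and $\bd_{\ell+2}$: assuming $\bd_{\ell+1}=\bu+\bw$ with $\bu,\bw$ in the cone, I would expand both in the basis, first rule out a ``mixed sign'' decomposition (where the $\bd^{**}$-coefficients of $\bu$ and $\bw$ have opposite signs) by exploiting the strict inequality $M_i^{+}+M_i^{-}>0$ coming from $\bd^{**}|_{L_i}$ being linearly independent from $\bd_i|_{L_i}$, and then use the sharpness of the $M_i^{+}$'s (the explicit zero entries of $\bd_{\ell+1}$) to force $\bu$ and $\bw$ to be scalar multiples of $\bd_{\ell+1}$. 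The delicate point to isolate is that the Type~II hypothesis (non-trivial linear dependence among $S_1,\ldots,S_\ell$) is exactly what couples the linkage classes so that $\bd^{**}$ can be chosen with non-trivial restriction to each $L_i$, which is essential for the support conditions claimed in \eqref{support of deficiency_one_cases part 3}.
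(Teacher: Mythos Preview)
Your outline for part (a) and the construction and generation arguments in part (b) track the paper's proof closely: your $\bd^{**}$ is the paper's $\tilde{\bd}$, your shifts $M_i^{\pm}$ are its $\alpha_i,\beta_i$, and unwinding shows your $\bd_{\ell+2}$ coincides with theirs. For extremality of $\bd_{\ell+1}$ the paper takes a slightly cleaner route than your mixed-sign analysis: it expands a hypothetical decomposition $\bd_{\ell+1}=\lambda\gamma+(1-\lambda)\theta$ in the basis $\{\bd_1,\ldots,\bd_\ell,\bd_{\ell+1}\}$ rather than $\{\bd_1,\ldots,\bd_\ell,\bd^{**}\}$; the zero entry of $\bd_{\ell+1}$ inside each $L_i$ forces the $\bd_i$-coefficients of $\gamma$ and $\theta$ to be non-negative, after which coefficient matching gives $\gamma,\theta\in\mathbb{R}\bd_{\ell+1}$ directly. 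Your approach works too (the inequality $M_i^{+}+M_i^{-}>0$ need only hold for \emph{some} $i$, which is automatic since $\bd^{**}\notin\mathrm{span}\{\bd_1,\ldots,\bd_\ell\}$), but it is more elaborate than necessary.

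The genuine gap is your final ``delicate point.'' The Type~II hypothesis says only that there is one linear relation among $S_1,\ldots,S_\ell$; it does \emph{not} force that relation to involve every $S_i$. Take $\ell=3$ with each $L_i$ a single reversible pair, where $S_1=S_2=\mathrm{span}\{e_1\}$ while $S_3=\mathrm{span}\{e_2\}$ is independent of $S_1+S_2$. This network is weakly reversible, deficiency one, Type~II, yet every $\bd^{**}\in\ker(\bW)\setminus\mathrm{span}\{\bd_1,\bd_2,\bd_3\}$ restricts on $L_3$ to a scalar multiple of $\bd_3|_{L_3}$ (since $\dim\ker(\bW_3)=1$ and the extra kernel direction lives entirely over $L_1\cup L_2$). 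Consequently $M_3^{+}+M_3^{-}=0$, and the resulting extreme vectors $\bd_4,\bd_5$ satisfy $\mathrm{supp}(\bd_4)\cap L_3=\mathrm{supp}(\bd_5)\cap L_3=\emptyset$; the clause $\emptyset\neq\mathrm{supp}(\bd_{\ell+1})\cap L_i$ simply fails for $i=3$. In fact the paper's own proof does not establish this clause either: it only shows that $\bd_{\ell+1}$ and $\bd_{\ell+2}$ each meet at least \emph{two} linkage classes (equations \eqref{support l+1 on two classes} and \eqref{support l+2 on two classes}), which is weaker than the lemma as stated but is all that Algorithm~\ref{algorithm:WR_def_one} actually uses.
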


\begin{proof}



$(a)$
From the assumption, the system $G$ is of Type II with $\delta_{1} = \cdots = \delta_{\ell} =0$.
Using Lemma~\ref{lem:dim wr_ker}, we get for $i=1, \ldots, \ell$, 
\begin{equation} \notag
\text{dim}(\text{ker}(\bW_i)) = \delta_{i} + 1 = 1.
\end{equation}

Further, for any $1 \leq i \leq \ell$ and $\bz \in \text{ker}(\bW_i) \backslash \{\mathbf{0}\}$,
\begin{equation} \notag
\text{supp}(\bz) = L_i.
\end{equation}
Note that $G$ has deficiency one, thus
$\text{dim}(\text{ker}(\bW)) = \ell + 1$,
and we derive \eqref{dim of deficiency_one_cases part3}.

\medskip


$(b)$ Now we construct the minimal set of generators of $\text{ker}(\bW) \cap \mathbb{R}^m_{\geq 0}$, denoted by $\{\bd_1, \ldots,\bd_{r}\}$. It will follow from the construction that $r = \ell + 2 = \text{dim}(\text{ker}(\bW)) + 1$.

Since $(G,\bk)$ is a weakly reversible mass-action system, it possesses a strictly positive steady state $\hat{\bx}\in \mathbb{R}^n_{>0}$ by Theorem~\ref{thm:boros}. Following Equation~\eqref{eq:ker_w delta 0} in Lemma~\ref{lem:dim wr_ker}, we can build $\ell$ vectors $\bd_1, \ldots, \bd_{\ell}$. We define $\bd_1 = (\bd_{1,1},\ldots,\bd_{1,m}) \in \text{ker}(\bW) \cap \mathbb{R}^m_{\geq 0}$, such that
\begin{eqnarray} \label{eq:defn_generators_2 a}
\bd_{1,i} =  
\begin{cases}
    \bx^{\by_i}, & \text{for } i \in L_1, \\
    0, & \text{for } i \notin L_1, 
\end{cases}
\end{eqnarray} 
It is clear that $\text{supp}(\bd_1) = L_1$. Analogously, for $i=1, \ldots, \ell$, we can define $\bd_i$ corresponding to the linkage classes $L_i$, with $\text{supp}(\bd_i) = L_i$.

Now we show that there exists a non-zero vector $\bd_{\ell + 1} \in \text{ker}(\bW) \cap \mathbb{R}^m_{\geq 0}$, such that 
\begin{equation} \label{support l+1}
\text{supp}(\bd_{\ell + 1}) \cap L_i \subsetneq L_i, \ \text{for } i = 1, \ldots, \ell.
\end{equation}
From Equation~\eqref{dim of deficiency_one_cases part3}, there exists a vector $\tilde{\bd} \in \text{ker}(\bW) \backslash \{\mathbf{0}\}$, which is linearly independent from $\{\bd_i\}^{\ell}_{i=1}$. 
Since $\bd_i \in \text{ker}(\bW) \cap \mathbb{R}^m_{\geq 0}$ with $\text{supp}(\bd_i) = L_i$, we set for $i = 1, \ldots, \ell$, 
\begin{equation} \label{max weight l+1}
\alpha_i = \max_{k \in L_i} \Big\{ - \frac{\tilde{\bd}_k}{\bd_{i,k}} \Big\}.
\end{equation} 
Then we define $\bd_{\ell + 1}$ as
\begin{equation} \label{eq:defn_generators_2 b}
\bd_{\ell + 1} 
= \sum\limits^{\ell}_{i=1} \alpha_i \bd_i + \tilde{\bd}.
\end{equation}
For any $1 \leq j \leq  \ell$ and $\theta \in L_j$, we obtain that
\begin{equation} \notag
\bd_{\ell + 1, \theta} = \alpha_j \bd_{i, \theta} + \tilde{\bd}_{\theta} 
\geq  - \frac{\tilde{\bd}_\theta}{\bd_{i,\theta}} \bd_{i, \theta} + \tilde{\bd}_{\theta} 
= 0,
\end{equation}
and the inequality holds when $\theta = k \in L_j$ in Equation~\eqref{max weight l+1}. 
Moreover, the linear independence between $\tilde{\bd}$ and $\{\bd_i\}^{\ell}_{i=1}$ implies that $\bd_{\ell + 1}$ is non-zero.
Thus, we show $\bd_{\ell+1} \in \text{ker}(\bW) \cap \mathbb{R}^m_{\geq 0}$, and it satisfies Equation~\eqref{support l+1}.

Furthermore, we claim that there exist at least two linkage classes: $L_i$, $L_j$ with $1 \leq i, j \leq \ell$ and $i \neq j$, such that
\begin{equation} \label{support l+1 on two classes}
\text{supp}(\bd_{\ell + 1}) \cap L_i \neq \emptyset, \ \text{supp}(\bd_{\ell + 1}) \cap L_j \neq \emptyset.
\end{equation}
Suppose not, we assume that only the linkage class $L_1$ satisfies $\text{supp}(\bd_{\ell + 1}) \cap L_1 \neq \emptyset$. This implies that
\begin{equation} \notag
\text{supp}(\bd_{\ell + 1}) \cap L_1 \subsetneq L_1.
\end{equation}
Using $\text{dim}(\text{ker}(\bW_1))=1$, we get that  $\bd_{\ell + 1}$ must be a scalar multiple of $\bd_{1}$, contradicting Equation~\eqref{eq:defn_generators_2 b}.

\medskip

Next, we construct another non-zero vector $\bd_{\ell + 2} \in \text{ker}(\bW) \cap \mathbb{R}^m_{\geq 0}$, such that
\begin{equation} \label{support l+2}
\text{supp}(\bd_{\ell + 2}) \cap L_i \subsetneq L_i, \ \text{for} \ i = 1, \ldots, \ell.
\end{equation}
Given $\bd_{1}, \ldots, \bd_{\ell}, \bd_{\ell + 1} \in \mathbb{R}^m_{\geq 0}$, we set for $i = 1, \ldots, \ell$, 
\begin{equation} \label{max weight l+2}
\beta_i = \max_{k \in L_i}\Big\{ \frac{\bd_{\ell+1,k}}{\bd_{i,k}} \Big\}.
\end{equation}
It is clear that $\beta_i\geq 0$ for $1\leq i\leq \ell$, then we define $\bd_{\ell + 2}$ as
\begin{equation} \label{eq:defn_generators_2 c}
\bd_{\ell + 2}= \sum\limits^{\ell}_{i=1} \beta_i \bd_i - \bd_{\ell + 1}.
\end{equation}
For any $1 \leq j \leq  \ell$ and $\theta \in L_j$, we get
\begin{equation} \notag
\bd_{\ell + 2, \theta} = \beta_j \bd_{i, \theta} - \bd_{\ell + 1, \theta} 
\geq \frac{\bd_{\ell+1,\theta}}{\bd_{i,\theta}} \bd_{i, \theta} - \bd_{\ell + 1, \theta} 
= 0
\end{equation}
The inequality holds when $\theta = k \in L_j$ in Equation \eqref{max weight l+2}. Moreover, the linear independence between 
$\bd_{\ell+1}$ and $\{\bd_i\}^{\ell}_{i=1}$ implies that $\bd_{\ell + 2}$ is non-zero. Thus, we show $\bd_{\ell+2} \in \text{ker}(\bW) \cap \mathbb{R}^m_{\geq 0}$, and it satisfies Equation~\eqref{support l+2}. Similarly as in \eqref{support l+1 on two classes}, there also exist at least two linkage classes: $L_i$, $L_j$ with $1 \leq i, j \leq \ell$ and $i \neq j$, such that
\begin{equation} \label{support l+2 on two classes}
\text{supp}(\bd_{\ell + 2}) \cap L_i \neq \emptyset, \ \ 
\text{supp}(\bd_{\ell + 2}) \cap L_j \neq \emptyset.
\end{equation}

We claim that the vectors $\bd_1, \ldots , \bd_{\ell+2}$ form a set of generators of $\text{ker}(\bW) \cap \mathbb{R}^m_{\geq 0}$. Using Equations~\eqref{eq:defn_generators_2 a} and~\eqref{eq:defn_generators_2 b}, we deduce the vectors $\{ \bd_i \}^{\ell + 1}_{i=1}$ are linearly independent. Together with $\text{dim}(\text{ker}(\bW))=\ell + 1$, we get that the set $\{\bd_1, \ldots ,\bd_{\ell + 1}\}$ is a basis for $\text{ker}(\bW)$. Thus, any vector $\bv \in \text{ker}(\bW) \cap \mathbb{R}^m_{\geq 0}$ can be expressed as
\begin{equation} \label{linear combination 2a}
\bv = a_1 \bd_1 + a_2 \bd_2 + \cdots + a_{\ell+1} \bd_{\ell+1} \in \mathbb{R}^m_{\geq 0},
\end{equation}
where $a_1, \ldots, a_{\ell+1} \in \mathbb{R}$.
Recall $\{ L_i\}^{\ell}_{i=1}$ are linkage classes with $\text{supp}(\bd_i) = L_i$, for $i=1, \ldots, \ell$, and $\text{supp}(\bd_{\ell + 1})$ in Equation~\eqref{support l+1}, then we obtain
\begin{equation} \notag
a_i \geq 0, \ \text{for } i=1, \ldots, \ell.
\end{equation}
If $a_{\ell+1} \geq 0$, it is clear that $\bv$ can be expressed as a conical combination of $\{\bd_1,\bd_2, \ldots ,\bd_{\ell + 1}\}$ from Equation~\eqref{linear combination 2a}.  Otherwise, if $a_{\ell+1} < 0$, we rewrite $\bv$ as 
\begin{equation} \label{linear combination 2b}
\begin{split}
\bv 
& = a_1 \bd_1 + \cdots + a_{\ell} \bd_{\ell} + a_{\ell+1} ( \sum\limits^{\ell}_{i=1} \beta_i \bd_i - \bd_{\ell +2} )
\\& = a_1\bd_1 + \cdots  + a_{\ell} \bd_{\ell} + a_{\ell+1} \sum\limits^{\ell}_{i=1} \beta_i\bd_i - a_{\ell+1} \bd_{\ell +2}
\\& = (a_1 + a_{\ell+1} \beta_1) \bd_1 + \cdots  + (a_{\ell} + a_{\ell+1} \beta_{\ell}) \bd_{\ell} - a_{\ell+1} \bd_{\ell +2}.
\end{split}
\end{equation}
Using $\bv \in \text{ker}(\bW) \cap \mathbb{R}^m_{\geq 0}$ and Equation~\eqref{support l+2}, we get that for $i = 1, \ldots, \ell$,
\begin{equation} \notag
a_i + a_{\ell+1} \beta_i \geq 0,
\end{equation}
which implies that $\bv$ can be generated by $\{\bd_1, \ldots, \bd_{\ell}, \bd_{\ell+2}\}$.

Finally, we show $\{\bd_1,\bd_2, \ldots ,\bd_{\ell + 2}\}$ is the minimal set of generators for $\text{ker}(\bW) \cap \mathbb{R}^m_{\geq 0}$.  Note that $\bd_1, \ldots ,\bd_{\ell + 1}$ form a basis for $\text{ker}(\bW)$ and $\bd_{\ell + 2} = \sum\limits^{\ell}_{i=1} \beta_i \bd_i - \bd_{\ell + 1}$, thus $\bd_{\ell + 2}$ cannot be generated by $\{\bd_i\}^{\ell+1}_{i=1}$. 
So it suffices to show $\{\bd_1,\bd_2, \ldots ,\bd_{\ell + 1}\}$ are all extreme vectors. 

Suppose not, there exists $1 \leq j \leq \ell+1$, such that $\bd_j$ is not an extreme vector.
Then we can find two vectors $\gamma, \theta \in \text{ker}(\bW) \cap \mathbb{R}^m_{\geq 0}$ and $0 < \lambda < 1$, such that
\begin{equation} \label{d_decompose_2}
\lambda \gamma + (1 - \lambda) \theta = \bd_j,
\end{equation}
where $\gamma \neq \nu \theta$ for any constant $\nu$. Then we write $\gamma$ and $\theta$ as the combination of 
$\{\bd_i\}^{\ell + 1}_{i=1}$,
\begin{equation} \notag
\gamma = \sum\limits^{\ell+1}_{i=1} \gamma_{i}\bd_i, \ \ 
\theta = \sum\limits^{\ell+1}_{i=1} \theta_{i} \bd_i.
\end{equation}
Since $\gamma, \theta \in \mathbb{R}^m_{\geq 0}$,
we have for $i = 1, \ldots, \ell$,
\begin{equation} \notag
\gamma_{i} \geq 0, \ \theta_{i} \geq 0.
\end{equation}

If $j \neq \ell + 1$, from Equation~\eqref{d_decompose_2}, we can derive that $\gamma_{i} = \theta_{i} = 0$ when $1 \leq i \leq \ell, i \neq j$. Since $\gamma, \theta \in \mathbb{R}^m_{\geq 0}$, and $\text{supp}(\bd_{\ell +1}) \cap L_i \subsetneq L_i$ for $i=1, \ldots, \ell$, we derive $\gamma_{\ell + 1} = \theta_{\ell + 1} = 0$. 
This implies $\gamma = \gamma_j d_j$ and $\theta = \theta_j d_j$, and this contradicts with $\gamma \neq \nu \theta$. 

If $j = \ell + 1$, in a similar way we can deduce that $\gamma_{i} = \theta_{i} = 0$ for $1 \leq i \leq \ell$. This implies $\gamma = \gamma_{\ell+1}\bd_{\ell+1}$ and $\theta = \theta_{\ell+1}\bd_{\ell+1}$, which also contradicts with $\gamma \neq \nu \theta$. 
Therefore we conclude that $\{\bd_1, \ldots, \bd_{\ell + 2}\}$ is the minimal set of generators of the cone $\text{ker}(\bW) \cap \mathbb{R}^m_{\geq 0}$.  
\end{proof}

We also illustrate an example where Lemma \ref{lem:deficiency_one_cases part3} can be verified.

\begin{example}
\label{ex:deficiency_not_sum_lemma}

Consider a weakly reversible deficiency one mass-action system 
shown in Figure \ref{fig:deficiency_not_sum_lemma}.
This reaction network has two deficiency zero linkage classes (i.e. $\delta_1 = \delta_2 = 0$), and the deficiency of the entire network is one (i.e. $\delta = 1$).
Therefore, we have 
\begin{equation}
    1 = \delta > \delta_1 + \delta_2 = 0 + 0.
\end{equation}
For all reactions $\by\rightarrow\by' \in E$, we assume
$k_{\by\rightarrow\by'} = 1$, and get
\begin{equation} 
\bW_1 = \begin{pmatrix}
1 & -1\\
0  & 0
\end{pmatrix}, \ \ 
\bW_2 = \begin{pmatrix}
1 & -1\\
0  & 0
\end{pmatrix}, \ \ 
\bW = \begin{pmatrix}
1 & -1 & 1 &  -1 \\
0  & 0 & 0  &  0  
\end{pmatrix}.
\end{equation}
So we can derive that
\begin{equation}
\text{ker}(\bW_1)) = \text{ker}(\bW_2)) = \text{span} 
\left\{
\begin{pmatrix}
1 \\
1 \\
\end{pmatrix}
\right\},
\end{equation}
and
\begin{equation}
\text{ker}(\bW)) = \text{span} 
\left\{\begin{pmatrix}
1 \\
1 \\
0 \\
0 \\
\end{pmatrix}, \ \ 
\begin{pmatrix}
-1 \\
0 \\
1 \\
0 \\
\end{pmatrix}, \ \ 
\begin{pmatrix}
1 \\
0 \\
0 \\
1 \\
\end{pmatrix}\right\}.
\end{equation}
For any vectors $\bz_1 \in \text{ker}(\bW_1) \backslash \{\mathbf{0}\}$ and $\bz_2 \in \text{ker}(\bW_2) \backslash \{\mathbf{0}\}$, we have 
\begin{equation}
\text{supp}(\bz_1)= L_1, \ \ 
\text{supp}(\bz_2)= L_2.
\end{equation}
Then, we compute the minimal set of generators of $\text{ker}(\bW)\cap\mathbb{R}^4_{\geq 0}$:
\begin{equation}
\bd_1 = 
\begin{pmatrix}
1 \\
1 \\
0 \\
0 \\
\end{pmatrix}, \ \ 
\bd_2 =
\begin{pmatrix}
0 \\
0 \\
1 \\
1\\
\end{pmatrix}, \ \ 
\bd_3 =
\begin{pmatrix}
0 \\
1 \\
1 \\
0 \\
\end{pmatrix}, \ \ 
\bd_4 =
\begin{pmatrix}
1 \\
0 \\
0 \\
1 \\
\end{pmatrix}.
\end{equation}
This indicates the number of extreme vectors: $r = 4$ and
\begin{equation}
r = \text{dim}(\text{ker}(\bW)) +1 = \ell +2,
\end{equation}
where $\ell = 2$.
Moreover, for $p=1, 2$,
\begin{equation}
\text{supp}(\bd_p) = L_p, \ \
\text{supp}(\bd_{\ell +1}) \cap L_p \subsetneq L_p, \ \
\text{supp}(\bd_{\ell +2}) \cap L_p \subsetneq L_p.
\end{equation}
Therefore, we verify Lemma~\ref{lem:deficiency_one_cases part3}.

\bigskip

\begin{figure}[H]
\centering
\includegraphics[scale=0.5]{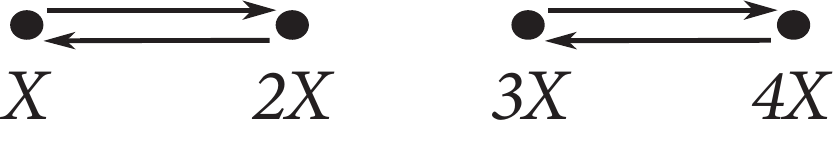}
\caption{A weakly reversible deficiency one mass-action system of Type II from Example \ref{ex:deficiency_not_sum_lemma}}
\label{fig:deficiency_not_sum_lemma}
\end{figure}
\end{example}

\medskip

To conclude this section, we show that given a mass-action system that admits weakly reversible deficiency one realizations, then these realizations must be of the same type.
First, we recall a special result from~\cite{deshpande2022source}:

\begin{theorem}[{\cite[Theorem 6.3]{deshpande2022source}}]
\label{thm:unique of deficiency_one under same linkage}
Consider two weakly reversible mass-action systems $(G, \bk)$ and $(G', \bk')$ having a deficiency of one and the same number of linkage classes. Let $G$ be of Type I and $G'$ be of Type II. Then $(G, \bk)$ and $(G', \bk')$ cannot be dynamically equivalent.
\end{theorem}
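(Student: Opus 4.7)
\emph{Proof plan.} I would argue by contradiction, supposing that $(G, \bk)$ of Type I and $(G', \bk')$ of Type II, both weakly reversible of deficiency one with $\ell$ linkage classes, are dynamically equivalent. The plan is to compare the structure of the associated pointed cones: by Lemma~\ref{lem:deficiency_one_cases part2}, the cone $\text{ker}(\bW) \cap \mathbb{R}^m_{\geq 0}$ of the Type I system has exactly $\ell + 1$ extreme rays, while by Lemma~\ref{lem:deficiency_one_cases part3} the cone of the Type II system has exactly $\ell + 2$. Both cones sit inside a kernel of dimension $\ell + 1$ (Lemma~\ref{lem:dim wr_ker}), so the Type I cone is simplicial whereas the Type II cone is not. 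Since the minimal generating set of a pointed polyhedral cone is unique up to positive scaling (Lemma~\ref{lem: pointed cone and generator}), this combinatorial mismatch will deliver the contradiction once the two cones are shown to coincide.

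To identify the two cones, I would use Remark~\ref{rmk:dyn_net_vectors} to rewrite dynamical equivalence as the pointwise identity $\bw_{\by_0} = \bar{\bw}_{\by_0}$ for every $\by_0 \in V \cup V'$, with the empty-sum convention for vertices that fail to be sources of the corresponding network. Weak reversibility gives $V_s = V$ and $V'_s = V'$, so one can assemble a single matrix $\hat{\bW}$ of net reaction vectors indexed by $V \cup V'$, containing $\bW$ and $\bW'$ as column-submatrices. The cone $\text{ker}(\hat{\bW}) \cap \mathbb{R}^{|V \cup V'|}_{\geq 0}$ must then simultaneously admit the Type I decomposition inherited from $(G, \bk)$ and the Type II decomposition inherited from $(G', \bk')$, at which point the extreme-ray counts $\ell + 1$ versus $\ell + 2$ cannot both be correct.

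The main obstacle is the possibility $V \neq V'$: a vertex $\by \in V \setminus V'$ must satisfy $\bw_{\by} = \mathbf{0}$ in $(G, \bk)$, so the unit vector $\be_{\by}$ becomes an additional extreme ray of $\text{ker}(\bW) \cap \mathbb{R}^m_{\geq 0}$ and may perturb the extreme-ray count away from the nominal $\ell + 1$ (symmetrically for $V' \setminus V$). I would control this via the support conditions of Lemmas~\ref{lem:deficiency_one_cases part2} and~\ref{lem:deficiency_one_cases part3}: since $\text{supp}(\bd_i) = L_i$ for all $i$ outside the exceptional linkage class in either description, a singleton-support extreme ray can occur only inside the deficiency-one class $L_\ell$ in Type I or among the crossing generators $\bd_{\ell+1}, \bd_{\ell+2}$ in Type II. Combined with the union condition $\text{supp}(\bd_\ell) \cup \text{supp}(\bd_{\ell+1}) = L_\ell$ and the non-trivial intersection requirements $\emptyset \neq \text{supp}(\bd_{\ell+1}) \cap L_i \subsetneq L_i$ in Type II, a careful bookkeeping of the admissible singleton-support rays should rule out any reconciliation between the two descriptions and close the proof.
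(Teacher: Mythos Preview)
The paper does not supply its own proof of this theorem; it is quoted verbatim from \cite{deshpande2022source} and used as a black box in the proof of Theorem~\ref{lem:unique of deficiency_one}. So there is no in-paper argument to compare against, and your proposal has to be judged on its own merits.

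Your core idea---the extreme-ray count $\ell+1$ (Lemma~\ref{lem:deficiency_one_cases part2}) versus $\ell+2$ (Lemma~\ref{lem:deficiency_one_cases part3}) for the same cone $\ker(\bW)\cap\mathbb{R}^m_{\geq 0}$---is correct and clean, \emph{provided} $V=V'$, since then dynamical equivalence forces $\bW=\bW'$ and the uniqueness of the minimal generating set gives the contradiction immediately.

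The gap is in your treatment of $V\neq V'$. Passing to $\hat{\bW}$ on $V\cup V'$ is problematic: the added zero columns create extra unit-vector extreme rays, and Lemmas~\ref{lem:deficiency_one_cases part2} and~\ref{lem:deficiency_one_cases part3} no longer apply to $\hat{\bW}$ directly, since it is not the net-reaction matrix of a weakly reversible system in the sense those lemmas require (the paper excludes isolated vertices). Your proposed ``careful bookkeeping'' of singleton-support rays is not spelled out, and it is not clear it closes. There is a much simpler route that the paper itself exploits in the proof of Theorem~\ref{lem:unique of deficiency_one}: first show $V'\subseteq V$, because in the Type~II network every linkage class has deficiency zero, hence affinely independent vertices (Proposition~\ref{prop_def_zero_affine}), so no source vertex of $G'$ can have zero net reaction vector; then observe that since $(G,\bk)$ and $(G',\bk')$ share the same stoichiometric subspace $S$ (Lemma~\ref{lem:wr_ker}), the same deficiency $\delta=1$, and by hypothesis the same number $\ell$ of linkage classes, the formula $\delta=m-\ell-s$ forces $|V|=|V'|$. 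Together these give $V=V'$, and your extreme-ray argument then finishes the proof without any further bookkeeping.
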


After Theorem~\ref{thm:unique of deficiency_one under same linkage}, we are ready to prove the  more general result as follows:

\begin{theorem}\label{lem:unique of deficiency_one}
Given two weakly reversible deficiency one mass-action systems: $(G, \bk)$ of Type I and $(G', \bk')$ of Type II, then $(G, \bk)$ and $(G', \bk')$ cannot be dynamically equivalent.
\end{theorem}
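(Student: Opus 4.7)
My plan is to argue by contradiction: I suppose $(G,\bk)$ of Type~I and $(G',\bk')$ of Type~II are dynamically equivalent, and derive a numerical impossibility by comparing the extreme-ray counts of the two kernel cones $\ker(\bW)\cap\RR^m_{\geq 0}$ and $\ker(\bW')\cap\RR^{m'}_{\geq 0}$.

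The first step is to translate dynamical equivalence into conditions on net reaction vectors via Remark~\ref{rmk:dyn_net_vectors}. Since both networks are weakly reversible we have $V_s=V$ and $V'_s=V'$, so equivalence forces $\bw_y=\bw'_y$ whenever $\by\in V\cap V'$, together with $\bw_y=\mathbf{0}$ for $\by\in V\setminus V'$ and $\bw'_y=\mathbf{0}$ for $\by\in V'\setminus V$. Setting $V_A:=\{\by\in V:\bw_y\neq\mathbf{0}\}$ and $V'_A:=\{\by\in V':\bw'_y\neq\mathbf{0}\}$, a quick check shows $V_A=V'_A\subseteq V\cap V'$; I denote this common active set by $\mathcal{A}$. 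Lemma~\ref{lem:wr_ker} then gives $S=\mathrm{Im}(\bW)=\mathrm{span}\{\bw_y:\by\in\mathcal{A}\}=\mathrm{Im}(\bW')=S'$, so the two stoichiometric subspaces coincide in a common dimension $s$, and the deficiency identities $m-\ell-s=1=m'-\ell'-s$ combine to yield $m-m'=\ell-\ell'$.

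The central step is an extreme-ray dichotomy for both cones. For any $\by\in V\setminus\mathcal{A}$ the corresponding column of $\bW$ vanishes, so $\be_y\in\ker(\bW)\cap\RR^m_{\geq 0}$ and is in fact an extreme ray. A short face argument shows the converse: writing any cone element as $\bd=\bd_I+\bd_A$ with supports in $V\setminus\mathcal{A}$ and $\mathcal{A}$ respectively, the vanishing of inactive columns gives $\bW\bd_I=\mathbf{0}=\bW\bd_A$ automatically, so both summands lie independently in the cone and extremality forces one of them to vanish. Hence every extreme ray is either a $\be_y$ with $\by\in V\setminus\mathcal{A}$ or has support inside $\mathcal{A}$ and restricts to an extreme ray of the common active cone $\mathcal{C}_\mathcal{A}:=\ker(\bW|_\mathcal{A})\cap\RR^{|\mathcal{A}|}_{\geq 0}$; the identical dichotomy applies to $(G',\bk')$ since $\bW|_\mathcal{A}=\bW'|_\mathcal{A}$. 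Letting $r_\mathcal{A}$ be the number of extreme rays of $\mathcal{C}_\mathcal{A}$, the Type~I count from Lemmas~\ref{lem:deficiency_one_cases part1} and~\ref{lem:deficiency_one_cases part2} (which give exactly $\ell+1$ extreme rays) together with the Type~II count from Lemma~\ref{lem:deficiency_one_cases part3} (exactly $\ell'+2$) produce
\begin{equation*}
|V\setminus\mathcal{A}|+r_\mathcal{A}=\ell+1,\qquad |V'\setminus\mathcal{A}|+r_\mathcal{A}=\ell'+2.
\end{equation*}
Subtracting and using $|V\setminus\mathcal{A}|-|V'\setminus\mathcal{A}|=(m-|\mathcal{A}|)-(m'-|\mathcal{A}|)=m-m'$ gives $m-m'=\ell-\ell'-1$, which combined with $m-m'=\ell-\ell'$ forces $-1=0$, the desired contradiction.

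The step I expect to be most delicate is verifying that Lemma~\ref{lem:deficiency_one_cases part3} genuinely excludes any basis-vector $\be_y$ among its $\ell'+2$ generators (so $V'=\mathcal{A}$ automatically): each listed generator either fills an entire linkage class of $G'$ (size $\geq 2$ by weak reversibility) or hits every linkage class nontrivially (support size $\geq\ell'\geq 2$). I also want to note that Type~II forces $\ell'\geq 2$, since a single-linkage-class deficiency-one network automatically satisfies $\delta=\delta_1$ and is therefore Type~I, so Lemma~\ref{lem:deficiency_one_cases part3} indeed applies to $G'$. Interestingly, no appeal to Theorem~\ref{thm:unique of deficiency_one under same linkage} is needed here; the counting alone closes the argument, although Theorem~\ref{thm:unique of deficiency_one under same linkage} is what suggested looking at this two-cone comparison in the first place.
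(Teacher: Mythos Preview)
Your proof is correct and takes a genuinely different route from the paper's argument.

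The paper proceeds structurally: it first shows that the two networks must share the same vertex set (using affine independence of vertices in deficiency-zero linkage classes on the Type~II side, and a pigeonhole argument on linkage classes for the Type~I side), deduces from this and the common stoichiometric subspace that $\ell=\ell'$, and then invokes the external result Theorem~\ref{thm:unique of deficiency_one under same linkage} to finish. Your argument instead bypasses the vertex-set comparison entirely: you observe that the extreme rays of each cone $\ker(\bW)\cap\RR^m_{\geq 0}$ split cleanly into standard basis vectors $\be_y$ indexed by inactive vertices and rays supported on the common active set $\mathcal{A}$, and since the latter contribution $r_{\mathcal{A}}$ is identical for both cones, the extreme-ray counts $\ell+1$ (Type~I, Lemmas~\ref{lem:deficiency_one_cases part1}--\ref{lem:deficiency_one_cases part2}) and $\ell'+2$ (Type~II, Lemma~\ref{lem:deficiency_one_cases part3}) force $m-m'=\ell-\ell'-1$, contradicting the deficiency identity $m-m'=\ell-\ell'$.

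Your approach has the pleasant feature of being entirely self-contained within the Section~\ref{sec:pointed_cone} lemmas, with no appeal to Theorem~\ref{thm:unique of deficiency_one under same linkage} needed; in effect you reprove that theorem as the special case $|V\setminus\mathcal{A}|=|V'\setminus\mathcal{A}|=0$. The paper's route, on the other hand, yields the stronger intermediate conclusion $V=V'$, which is of independent interest. Your closing remark that $V'=\mathcal{A}$ automatically is correct and could be argued more directly: since every linkage class of $G'$ has deficiency zero, Proposition~\ref{prop_def_zero_affine} gives affine independence, so no net reaction vector $\bw'_y$ can vanish. This would let you drop the support-size check on the generators from Lemma~\ref{lem:deficiency_one_cases part3}, though your version also works.
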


\begin{proof}

For contradiction, assume that the two weakly reversible deficiency one mass-action systems $(G=(V, E), \bk)$ of Type I, and $(G'=(V', E'), \bk')$ of Type II are dynamically equivalent. By Remark~\ref{rmk:dyn_net_vectors}, they have the same set of non-zero net reaction vectors. Using $\text{Im}(\bW) = S$ from Lemma~\ref{lem:wr_ker}, we get that $(G, \bk)$ and $(G', \bk')$ share the same stoichiometric subspace.

Now we claim that $(G, \bk)$ and $(G', \bk')$ have the same number of vertices. For contradiction, suppose there exists a vertex $\by \in V'$ such that $\by \notin V$. Let $\bw_{\by}$ and $\bw'_{\by}$ represent the net reaction vectors corresponding to the vertex $\by$ in $G$ and $G'$. From Remark~\ref{rmk:dyn_net_vectors}, we deduce that 
\begin{equation} \label{bw'y = 0}
\bw'_{\by} = \sum\limits_{\by \rightarrow\by_j\in E'} k'_{\by \rightarrow\by_j}(\by_j - \by) = \mathbf{0}.
\end{equation}
Since the network $G'$ is of Type II, each linkage of $G'$ has deficiency zero. By Proposition~\ref{prop_def_zero_affine}, we get that its vertices are affinely independent within each linkage class. This implies that the reaction vectors $\{ \by_i - \by \}_{\by \rightarrow\by_i \in E'}$ are linearly independent,  contradicting Equation~\eqref{bw'y = 0}.

Assume that there exists a vertex $\by \in L \subseteq V$ such that $\by \notin V'$  (where $L$ is some linkage class in $G$).  Following the steps in the first part, we have that for any $\by' \in V'$, $\bw'_{\by} \neq \mathbf{0}$. This shows that $V' \subsetneq V$. 
Moreover, from $\by \notin V'$, we get that 
\begin{equation} \notag
    \bw_{\by} = \mathbf{0}.
\end{equation}
This implies that the vertices in the linkage class $L$ are not affinely independent. Therefore the deficiency of linkage class $L$ is one. Since $(G, \bk)$ and $(G', \bk')$ have the same stoichiometric subspace and deficiency, we deduce that $G$ has at least one more linkage class than $G'$. From the Pigeonhole Principle, there exists at least one linkage class in $G'$ that is split into different linkage classes in $G$. Let us call this linkage class as $L'_1 \subsetneq V'$. Using Lemma~\ref{lem:dim wr_ker}, we have 
\begin{equation} \notag
\text{dim}(\text{ker}(\bW'_1)) \geq 1.
\end{equation}
where $\bW'_1$ is the matrix of net reaction vectors on $L'_1$. This implies that the stoichiometric subspaces corresponding to the linkage classes in $G$ are not linearly independent, contradicting the fact that $G$ is of Type I. 

Since $(G, \bk)$ and $(G', \bk')$ have the same stoichiometric subspace, number of vertices, and deficiency we obtain that $(G, \bk)$ and $(G', \bk')$ possess the same number of linkage classes. Finally, applying Theorem~\ref{thm:unique of deficiency_one under same linkage}, we get that $(G, \bk)$ and $(G', \bk')$ cannot be dynamically equivalent, which leads to a contradiction.
\end{proof}

The following remark is a direct consequence of Theorem~\ref{lem:unique of deficiency_one}.

\begin{remark}
\label{rmk:unique of deficiency_one}
For any mass-action system $(G, \bk)$, it at most has one type of weakly reversible deficiency one realization, i.e. either Type I or Type II.
\end{remark}

\section{Main results}
\label{sec:algorithms}

This section aims to present the main algorithm of this paper, which checks the existence of a weakly reversible deficiency one realization and outputs one if it exists.
In this algorithm, the inputs are the matrices of source vertices and net reaction vectors via 
\[
\bY_s = (\by_1,\by_2,\ldots,\by_m)
\ \ \text{and } \
\bW = (\bw_1,\bw_2,\ldots,\bw_m)
\]
respectively. 
For the sake of simplicity, we temporarily  let $d_{\bW}=\{\bd_1,\bd_2,\ldots,\bd_r\}$ denote the minimal set of generators of $\text{ker}(\bW)\cap\mathbb{R}^m_{\geq 0}$ in this section.

\medskip

To build the main algorithm, we need an algorithm to search for a weakly reversible realization with a single linkage class.  
We use the algorithm in \cite{def_one} and summarize its main idea as follows.

First, the algorithm in \cite{def_one} checks whether there exists a reaction network realization that generates the given dynamical system such that all the target vertices are among the source vertices, {\em without} imposing the restrictions that $(i)$ the network should be {\em weakly reversible}, and $(ii)$ there should be only {\em one} linkage class. Next, if such a realization exists, the algorithm greedily searches for a maximal realization (a realization containing the maximum number of reactions) that generates the same dynamical system, while still imposing the restriction that all target vertices are among the source vertices. 
The algorithm uses the fact that if the initial realization was weakly reversible and consisted of a single linkage class, then the maximal realization found using this procedure preserves weakly reversibility and a single linkage class. 
Finally, based on this maximal realization, the algorithm constructs a Kirchoff matrix $Q$ and checks whether $\dim(\text{ker}(Q))=1$ and $\text{supp}(\text{ker}(Q))=\{1,2,3,\ldots,m\}$. 
If both conditions are satisfied, then the maximal realization is weakly reversible and consists of a single linkage class. Otherwise,  there is no such realization that generates the given polynomial dynamical system. 

For more details on this algorithm and its implementation and complexity, please see~\cite{def_one}. In what follows, we will refer to the algorithm in~\cite{def_one} as \textbf{Alg-WR}$^{\ell=1}$.

\subsection{Algorithm for weakly reversible and deficiency one realization}

Now we state the main algorithm. The key idea is to find a proper decomposition on $\bW = \bY \bA_{\bk}$, which allows a weakly reversible and deficiency one realization. We apply 
\textbf{Alg-WR}$^{\ell=1}$ to ensure weakly reversibility and  the single linkage class condition, and use results in Section~\ref{sec:pointed_cone} to guarantee that the deficiency of the network is one. 

\bigskip\bigskip


\begin{breakablealgorithm}
\caption{(Check the existence of a weakly reversible deficiency one realization)} \label{algorithm:WR_def_one}

\medskip

\noindent \textbf{Input:} The matrices of source vertices $\bY_s = (\by_1, \ldots, \by_m)$, and net reaction vectors $\bW = (\bw_1, \ldots, \bw_m)$ that generate the dynamical system $\dot{\bx} = \sum\limits_{i=1}^m \bx^{\by_i} \bw_i$. 

\medskip

\noindent \textbf{Output:} A weakly reversible deficiency one realization if exists or output that it does not exist. 

\medskip

\begin{algorithmic}[1] 

\State Set flag = 0 and $\text{dim}(\text{ker}(\bW)) = \bw^*$;

\State Find the minimal set of generators $d_{\bW}=\{\bd_1,\bd_2,\ldots,\bd_r\}$ of the pointed cone $\text{ker}(\bW)\cap\mathbb{R}^m_{\geq 0}$. 

\If{$r < 2$ or
$\displaystyle\bigcup_{i=1}^r \text{supp} (\bd_i) \neq [m]$:}
\State\label{line:5} Exit the main program; 

\ElsIf{r = 2}\label{line:6}

\State Pass $\bY_s, \bW$ through 
\textbf{Alg-WR}$^{\ell=1}$
\If 
{\textbf{Alg-WR}$^{\ell=1}$ outputs that a weakly reversible realization consisting of a single linkage class does not exist}\label{line:8} 

\State Exit the main program;

\Else

\State flag=1;

\State Exit the main program; 

\EndIf

\Else\label{line:13}

\For{$i = 1, 2, \ldots, r-1$}        
\For{$j = i+1, i+2, \ldots, r$}\label{line:loop}               
\State $S_1 = \{\bd_i, \bd_j\}$.

\State $S_2 = \bd_{\bW} \setminus S_1$ and \footnote{For the simplicity of notations, we use a new symbol $\hbd$ to represent the vectors in $S_2$}set $S_2 := \{ \hbd_p \}^{r-2}_{p=1}$.

\If {$r = \bw^*$ and the support of $S_1$ and every member of $S_2$ are disjoint}\label{line:19}

\State Define linkage classes to be $\{L_p\}^{r-1}_{p=1}$, where $L_p:= \{\text{supp}(\hbd_p), \hbd_p \in S_2\}$ for $1 \leq p \leq r-2$, and $L_{r-1} := \{\text{supp}(\bd_i) \cup \text{supp}(\bd_j) \}$. 

\State Let $\bY_p$ denotes the vertices in linkage class $L_p$, and $\bW_p$ denotes the matrix of net reaction vectors corresponding to $\bY_p$.


\For{$p=1$ to $r-1$}

\State Pass $\bY_p,\bW_p$ through 
\textbf{Alg-WR}$^{\ell=1}$

\If 
{\textbf{Alg-WR}$^{\ell=1}$ outputs that a weakly reversible realization consisting of a single linkage class does not exist}\label{line:23}

\State Go to line~\ref{line:loop};





\EndIf

\EndFor


\State\label{line:27} flag=2;

\State Exit the main program;


\ElsIf {$r = \bw^* + 1$ and the support of the members of $S_2$ partition $[m]$}\label{line:29}

\State Define linkage classes to be $\{L_p\}^{r-2}_{p=1}$, where $L_p:= \{\text{supp}(\hbd_p), \hbd_p \in S_2\}$ for $1 \leq p \leq r-2$.

\State Let $\bY_p$ denotes the vertices in linkage class $L_p$, and $\bW_p$ denotes the matrix of net reaction vectors corresponding to $\bY_p$.


    
\If {$\text{dim}(\text{ker}(\bW_1)) = \text{dim}(\text{ker}(\bW_2))  = \cdots = \text{dim}(\text{ker}(\bW_{r-2}))  =  1$}\label{line:31}

\For{$p=1$ to $r-2$}

\State Pass $\bY_p,\bW_p$ through
\textbf{Alg-WR}$^{\ell=1}$

\If 
{\textbf{Alg-WR}$^{\ell=1}$ outputs that a weakly reversible realization consisting of a single linkage class does not exist}\label{line:34}

\State Go to line~\ref{line:loop};

\EndIf

\EndFor

\State flag=3;

\State Exit the main program;

\EndIf

\EndIf


\EndFor

\EndFor

\EndIf

\bigskip

\State \textbf{End of main program}

\vspace{5mm}

\If {flag = 0}

\State\label{line:flag=0} Print: No weakly reversible and deficiency one realization exists.

\ElsIf{flag = 1}

\State\label{line:flag=1} Print: Weakly reversible and deficiency one realization consisting of a single linkage class exists.

\ElsIf{flag = 2}

\State\label{line:flag=2} Print: Weakly reversible and deficiency one realization of Type I exists.

\ElsIf{flag = 3}

\State\label{line:flag=3} Print: Weakly reversible and deficiency one realization of Type II exists.

\EndIf



\end{algorithmic}

\end{breakablealgorithm}

\newpage

\tikzset{%
    Node/.style={rectangle, rounded corners, draw=black, thick, fill=blue!10, fill opacity = 1, minimum width=4.5cm, minimum height=1cm, outer sep=0pt},
    Edge/.style={very thick, style={very thick, arrows={-Stealth[length=7.5pt,width=7.5pt]}}},
}

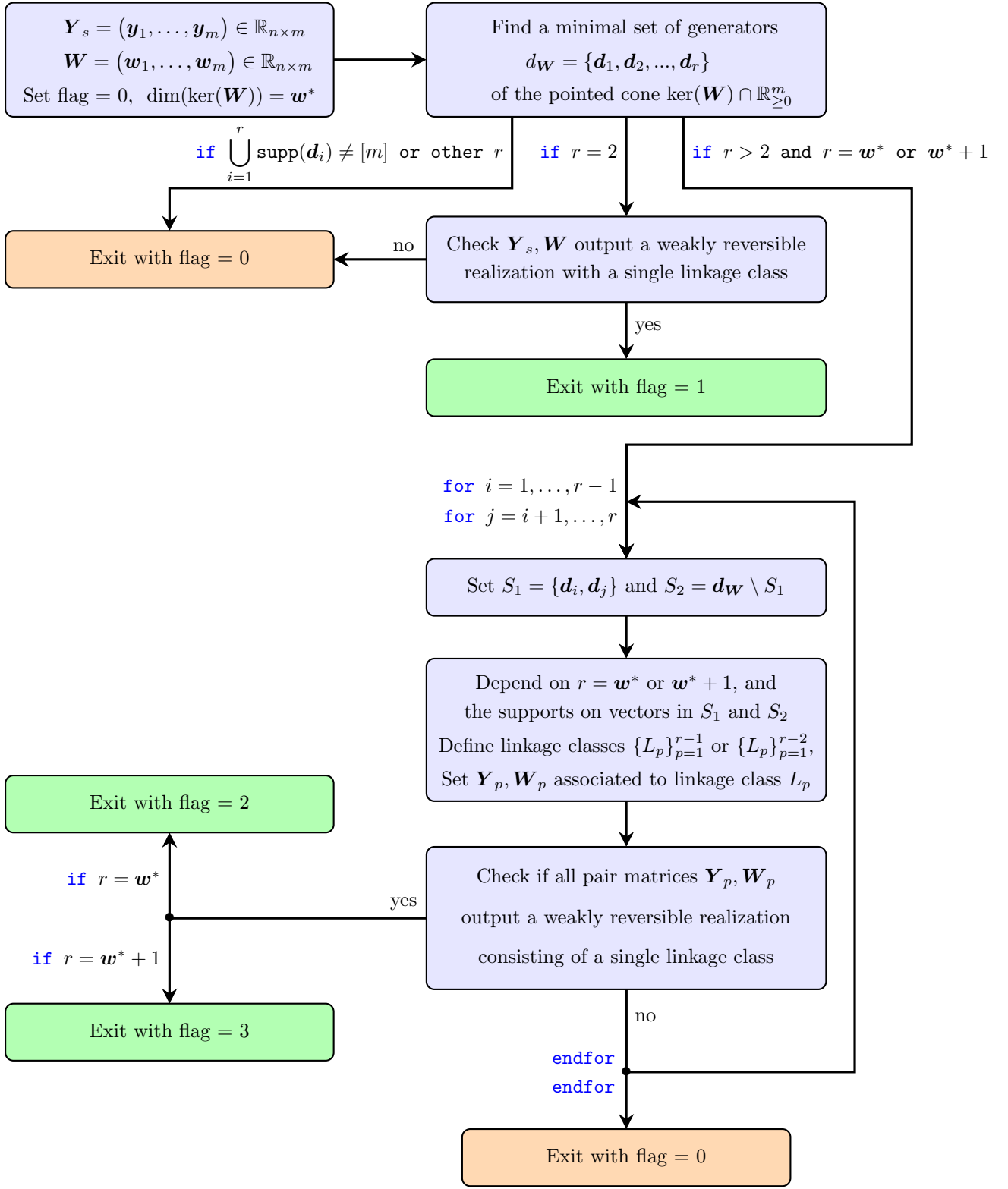
\begin{figure}[H]
\centering
    \begin{tikzpicture}
    \node[Node,  minimum width=5.75cm, fill=orange!30] (fail1) at (1,-3.75) {\small Exit with flag = 0};
        \node[Node,  minimum width=5.75cm, fill=orange!30] (fail2) at (9, -19.5) {\small Exit with flag = 0};
    \node[Node, minimum height=2cm, minimum width=5.75cm] (YW) at (1, -0.25) {};
        \node at (-1,0.3) [right] {\small $\bY_s = \begin{pmatrix} \by_1 ,  \ldots ,  \by_m\end{pmatrix} \in  \mathbb{R}_{n \times m}$};
        \node at (-1,-0.3) [right] {\small $\bW = \begin{pmatrix} \bw_1 ,  \ldots ,  \bw_m\end{pmatrix} \in \mathbb{R}_{n \times m}$};
        \node at (-1.7,-0.9) [right] {\small Set flag = 0, \ $\rm{dim}(\rm{ker}(\bW)) = \bw^*$};
    \node[Node, minimum height=2cm, minimum width=7cm] (dw) at (9,-0.25) {};
        \node at (6.5,0.3) [right] {\small Find a minimal set of generators};
        \node at (7.1,-0.3) [right] {\small $d_{\bW}=\{\bd_1,\bd_2,...,\bd_r\}$};
        \node at (6.5,-0.9) [right] {\small of the pointed cone $\text{ker}(\bW)\cap\mathbb{R}^m_{\geq 0}$};
        \draw[Edge] (YW)--(dw) node {};   
        \draw[Edge] (7,-1.25) to (7,-2.5) node [above=17pt,left] {\small \ttfamily\bfseries \textcolor{blue}{if} $\displaystyle\bigcup_{i=1}^r \text{supp} (\bd_i) \neq [m]$ or other $r$} to (1,-2.5)  to (fail1);
    \node[Node,  minimum width=7cm, minimum height=1.5cm] (subalg) at (9,-3.75) {};
        \node at (9,-3.5) {\small Check $\bY_s, \bW$ output a weakly reversible};
        \node at (9,-4) {\small realization with a single linkage class};
        \draw[Edge] (dw)--(subalg) node [above= 54pt, left] {\small \ttfamily\bfseries \textcolor{blue}{if} $r = 2$};
        \draw[Edge] (subalg) node[xshift=-3.9cm, above] {\small no}--(fail1);
    \node[Node,  minimum width=7cm, minimum height=1cm, fill=green!30] (flag1) at (9,-6) {};
        \node at (9,-6) {\small Exit with flag = 1};
    \draw[Edge] (subalg)--(flag1) node [above=30pt, right] {\small yes};
    \node[Node, minimum height=1cm, minimum width=7cm] (S1S2) at (9,-9.5) {};
        \node at (9,-9.5) {\small Set $S_1 = \{\bd_i, \bd_j\}$ and $S_2 = \bd_{\bW}\setminus S_1$};
        \draw[Edge] (10,-1.25) to (10,-2.5) node [above = 18pt,right] {\small \ttfamily\bfseries \textcolor{blue}{if} $r > 2$ and $r = \bw^*$ or $\bw^* + 1$} to (14,-2.5)  to (14,-7) to (9,-7) to (S1S2);
        \draw[Edge] (9,-7)--(S1S2) node [above = 50pt, left] {\small \ttfamily\bfseries \textcolor{blue}{for} $i = 1, \ldots, r-1$ };   
        \draw[Edge] (9,-7)--(S1S2) node [above = 35pt, left] {\small \ttfamily\bfseries \textcolor{blue}{for} $j = i+1, \ldots, r$ };
    \node[Node, minimum height=2.5cm, minimum width=7cm] (Lp) at (9,-12) {};
        \node at (9,-11.2) {\small Depend on $r = \bw^*$ or $\bw^* + 1$, and};
        \node at (9,-11.7) {\small the supports on vectors in $S_1$ and $S_2$};
        \node at (9,-12.3) {\small Define linkage classes $\{L_p\}^{r-1}_{p=1}$ or $\{L_p\}^{r-2}_{p=1}$,};
        \node at (9,-12.9) {\small Set $\bY_p, \bW_p$ associated to linkage class $L_p$};
        \draw[Edge] (S1S2)--(Lp) node {\small };
    \node[Node,  minimum height=2.5cm, minimum width=7cm] (YpWp) at (9,-15.3) {};
        \node at (9,-14.6) {\small Check if all pair matrices $\bY_p, \bW_p$};
        \node at (9,-15.3) {\small output a weakly reversible realization};
        \node at (9,-16.0) {\small consisting of a single linkage class};
        \draw[Edge] (Lp)--(YpWp) node[below] {};
        \node at (9,-18) {$\bullet$};
        \node at (9,-17.75) [left] {\small \ttfamily\bfseries \textcolor{blue}{endfor}\,};
        \node at (9,-18.25) [left] {\small \ttfamily\bfseries \textcolor{blue}{endfor}\,};
        \draw[Edge] (YpWp) to (9,-18) to (13,-18) to (13,-8) to (9, -8);
    \node[Node,  minimum width=5.75cm, fill=green!30] (flag2) at (1,-13.3) {\small Exit with flag = 2};
        \node[Node,  minimum width=5.75cm, fill=green!30] (flag3) at (1,-17.3) {\small Exit with flag = 3};
        \draw[Edge] (YpWp) node [xshift=-3.9cm,above]  {\small yes} to (5.5,-15.3) to (1,-15.3) node [above= 20pt,left] {\small \ttfamily\bfseries \textcolor{blue}{if} $r = \bw^*$} to (flag2);
        \draw[Edge] (YpWp) to (5.5,-15.3) to (1,-15.3)node [below= 20pt,left] {\small \ttfamily\bfseries \textcolor{blue}{if} $r = \bw^*+1$} to (flag3);
        \node at (1,-15.3) {$\bullet$};
        \draw[Edge] (YpWp)--(fail2) node [above=70pt, right] {\small no};; 
\end{tikzpicture}  
\caption{Algorithm \ref{algorithm:WR_def_one} for finding a weakly reversible deficiency one realization that generates a given polynomial dynamical system $\dot{\bx} = \sum\limits_{i=1}^m \bx^{\by_i} \bw_i$. 
}
\label{fig:Alg} 
\end{figure}

\newpage

Now we show the correctness of Algorithm \ref{algorithm:WR_def_one} via the following two lemmas.

\begin{lemma}
\label{lem: algorithm_2_part a}

Suppose Algorithm \ref{algorithm:WR_def_one} exits with a positive flag value, then there exists a weakly reversible deficiency one realization of the dynamical system $\dot{\bx} = \sum\limits_{i=1}^m \bx^{\by_i} \bw_i$. Moreover, we have
\begin{enumerate}
\item[(a)] If flag = 1, the system admits a weakly reversible deficiency one realization consisting of a single linkage class.

\item[(b)] If flag = 2, the system admits a weakly reversible deficiency one realization of Type I.

\item[(c)] If flag = 3, the system admits a weakly reversible deficiency one realization of Type II.
\end{enumerate}
\end{lemma}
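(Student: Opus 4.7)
The plan is to handle each positive flag value as a separate case, in each case verifying four things: that the algorithm's output is indeed a realization of $\dot{\bx}=\sum_i \bx^{\by_i}\bw_i$, that it is weakly reversible, that its deficiency is one, and that it is of the claimed type. The unifying strategy is to read off the number of vertices, linkage classes, and the rank of $\bW$ from the algorithm's decisions, then invoke Lemma~\ref{lem:wr_ker} to identify the stoichiometric subspace with $\mathrm{Im}(\bW)$ and compute $\delta=m-\ell-s$ directly, while using Remark~\ref{rmk:generator_on_deficiency_one} (or Lemma~\ref{lem:deficiency_zero generator}) on each individual linkage class to pin down $\delta_p$.

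For part (a), the algorithm exits with flag $=1$ only via lines~\ref{line:6}--\ref{line:8}, meaning $r=2$ and the subroutine \textbf{Alg-WR}$^{\ell=1}$ has returned a weakly reversible realization with a single linkage class and net reaction vectors equal to $\bW$. Since $r=2$, Remark~\ref{rmk:generator_on_deficiency_one} forces this realization to have deficiency one. For part (b), the flag $=2$ branch (lines~\ref{line:19}--\ref{line:27}) requires $r=\bw^*=\dim\ker(\bW)$ together with disjointness of the supports of $S_1=\{\bd_i,\bd_j\}$ and every element of $S_2$; the resulting $r-1$ vertex-disjoint weakly reversible sub-realizations, one per linkage class, assemble into a weakly reversible network $G$ whose net-reaction matrix is still $\bW$ (so $G$ is a realization). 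By Lemma~\ref{lem:wr_ker}, $s=\dim\mathrm{Im}(\bW)=m-r$, so $\delta=m-(r-1)-(m-r)=1$. For $p=1,\dots,r-2$ the class $L_p$ has $\dim\ker(\bW_p)=1$ (only one generator $\hbd_p$ has support inside $L_p$), hence $\delta_p=0$ by Lemma~\ref{lem:deficiency_zero generator}; for $L_{r-1}$ the generators $\bd_i,\bd_j$ restrict to a two-element generating set, giving $\delta_{r-1}=1$ by Remark~\ref{rmk:generator_on_deficiency_one}. Thus $\sum_p\delta_p=1=\delta$, which is exactly Type~I.

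For part (c), the flag $=3$ branch (lines~\ref{line:29}--\ref{line:34}) is entered only when $r=\bw^*+1$ and the supports of the members of $S_2$ already partition $[m]$, and when the additional check $\dim\ker(\bW_p)=1$ at line~\ref{line:31} succeeds for every $p$. Each successful \textbf{Alg-WR}$^{\ell=1}$ call yields a weakly reversible realization of the sub-dynamics on $L_p$, and assembling them produces a weakly reversible realization of $\dot{\bx}=\sum_i\bx^{\by_i}\bw_i$ with $\ell=r-2$ linkage classes. Using $s=m-(r-1)$ from Lemma~\ref{lem:wr_ker} gives $\delta=m-(r-2)-(m-r+1)=1$, while Lemma~\ref{lem:deficiency_zero generator} applied to each $\bW_p$ forces $\delta_p=0$; hence $\sum_p\delta_p=0<1=\delta$, confirming Type~II. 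The main obstacle I expect is not the arithmetic but the book-keeping in case (b): one must carefully argue that when the supports of $\bd_i,\bd_j,\hbd_1,\dots,\hbd_{r-2}$ have the stated disjointness structure, the restriction of the cone generators to each linkage class actually gives the minimal generating set of $\ker(\bW_p)\cap\mathbb{R}^{|L_p|}_{\geq 0}$, so that Remark~\ref{rmk:generator_on_deficiency_one} can be legitimately invoked on $L_{r-1}$ rather than merely providing an upper bound on $\dim\ker(\bW_{r-1})$.
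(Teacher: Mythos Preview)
Your proposal is correct and follows essentially the paper's approach; the only stylistic difference is that you compute the global deficiency directly via $s=\dim\mathrm{Im}(\bW)=m-\dim\ker(\bW)$ and rank--nullity, whereas the paper first establishes $\sum_q\dim\ker(\bW_q)=\dim\ker(\bW)$ (Type~I) or $\dim\ker(\bW)-1$ (Type~II) and then reads off $\delta=\sum_q\delta_q$ or $\delta=\sum_q\delta_q+1$.

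On the obstacle you flag in case~(b): you do not actually need to identify the minimal generating set of $\ker(\bW_{r-1})\cap\mathbb{R}^{|L_{r-1}|}_{\geq 0}$. Because the $L_q$ partition $[m]$, the zero-extensions of the spaces $\ker(\bW_q)$ form an internal direct sum inside $\ker(\bW)$, so $\sum_q\dim\ker(\bW_q)\leq\dim\ker(\bW)=r$. Each summand has dimension at least $1$ (the restriction of $\hbd_q$ is nonzero), and $\dim\ker(\bW_{r-1})\geq 2$ since the restrictions of $\bd_i,\bd_j$ are linearly independent; this forces $\sum_q\dim\ker(\bW_q)\geq r$ and hence equality throughout. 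In particular $\dim\ker(\bW_{r-1})=2$, and Lemma~\ref{lem:dim wr_ker}\ref{dim of kerW single linkage class} then gives $\delta_{r-1}=1$ directly, bypassing Remark~\ref{rmk:generator_on_deficiency_one}. The paper makes this same dimension count (see~\eqref{flag=2 ker estimate} and~\eqref{flag=2 ker sum}), albeit tersely.
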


\begin{proof}

$(a)$
From flag $= 1$, we obtain that $r = 2$ with $\text{supp}(\bd_1) \cup \text{supp}(\bd_2) = [m]$. Moreover, the input matrices $\bY_s$ and $\bW$ pass through \textbf{Alg-WR}$^{\ell=1}$.

Then there exists a weakly reversible realization with a single linkage class that generates the dynamical system $\dot{\bx} = \sum\limits_{i=1}^m \bx^{\by_i} \bw_i$. Using Remark~\ref{rmk:generator_on_deficiency_one}, we conclude its deficiency is one.

\medskip

$(b)$
From flag $= 2$, we get $r = \text{dim} (\text{ker}(\bW)) > 2$, and $r-1$ linkage classes $\{L_1, \ldots, L_{r-1}\}$ as follows.
There exists some $1 \leq i < j \leq r$,
\begin{equation} \notag
S_1 = \{\bd_i,\bd_j\} \ \ \text{and } \
S_2 = d_{\bW} \setminus S_1.
\end{equation}
For the simplicity of notations, we rename $r-2$ vectors in $S_2$ as 
 $S_2 := \{ \hbd_p \}^{r-2}_{p=1}$ and set
\begin{equation} \notag
\begin{split}
& L_p = \{\text{supp}(\hbd_p): \ \hbd_p \in S_2 \}, \ \text{for} \ 1 \leq p \leq r-2,
\\& L_{r-1} =\{\text{supp}(\bd_i) \cup \text{supp}(\bd_j) \},
\end{split}
\end{equation}
with $\{L_1, L_2, \ldots, L_{r-1} \}$ partition $[m]$.

Moreover, for any $1 \leq q \leq r-1$, the 
matrices of source vertices and net reaction vectors $\bY_q, \bW_q$ related to the linkage class $L_q$ pass through 
\textbf{Alg-WR}$^{\ell=1}$.
Thus each linkage class $L_q$ admits a weakly reversible realization.
Together with $\{L_q\}^{r-1}_{q=1}$ have disjoint supports in $[m]$, we have
\begin{equation}
\begin{split} \label{flag=2 ker estimate}
& \text{ker}(\bW_p) = \{\text{span}(\hbd_p) \}, \ \text{for} \ 1 \leq p \leq r-2,
\\& \text{ker}(\bW_{r-1}) = \text{span} \{ \bd_i, \bd_j \}.
\end{split}
\end{equation}
Using Lemma~\ref{lem:deficiency_zero generator} and Remark~\ref{rmk:generator_on_deficiency_one} on the realization under \textbf{Alg-WR}$^{\ell=1}$,
we get
\begin{equation} \label{flag=2 deficiency}
\delta_1 = \cdots = \delta_{r-1} = 0 
\ \ \text{and } \
\delta_{r-1} = 1,
\end{equation}
where $\delta_q$ represents the deficiency of linkage class $L_q$.

Now we compute the deficiency of the whole realization $\delta$. From \eqref{flag=2 ker estimate}, we obtain
\begin{equation} \label{flag=2 ker sum}
\text{dim}(\text{ker}(\bW_1) + \text{dim}(\text{ker}(\bW_2) + \cdots + \text{dim}(\text{ker}(\bW_{r-1})
= r = \text{dim}(\text{ker}(\bW).
\end{equation}
Applying Lemma~\ref{lem:wr_ker} and Lemma \ref{lem:dim wr_ker} on Equation~\eqref{flag=2 ker sum}, we deduce for $p = 1, \ldots, r-1$, 
\begin{equation} \notag
\text{dim}(\text{ker}(\bW_q)) = 1 + \delta_{q} = |L_q| - s_q \ \ \text{and } \
\sum\limits_{q=1}^{r-1} s_{q} = s,
\end{equation}
where $s_q$ and $s$ represent the stoichiometric subspace for linkage class $L_q$ and whole network respectively. 
Then we do the summation from $q = 1$ to $r-1$, and get
\begin{equation} \notag
\sum\limits_{q=1}^{r-1} (|L_q| - s_q) = m - s = (r-1) + \sum\limits_{q=1}^{r-1} \delta_{q}.
\end{equation}
From Equation~\eqref{flag=2 deficiency}, we conclude that
\begin{equation} \notag
\delta = m - s - (r-1) = \sum\limits_{q=1}^{r-1} \delta_{q} = 1,
\end{equation}
and the system admits a weakly reversible deficiency one realization of Type I.

\medskip

$(c)$
From flag $= 3$, we get $r = \text{dim} (\text{ker}(\bW)) + 1 > 2$, and $r-2$ linkage classes $\{L_1, \ldots, L_{r-2}\}$ as follows.
There exists some $1 \leq i < j \leq r$,
\begin{equation} \notag
S_1 = \{\bd_i,\bd_j\} 
\ \ \text{and } \
S_2 = d_{\bW} \setminus S_1,
\end{equation}
Similarly, we rename $r-2$ vectors in $S_2$ as $S_2 := \{ \hbd_p \}^{r-2}_{p=1}$ and set
\begin{equation} \notag
L_p = \{\text{supp}(\hbd_p): \ \hbd_p \in S_2 \}, \ \text{for} \ 1 \leq p \leq r-2,
\end{equation}
with $\{L_1, L_2, \ldots, L_{r-2} \}$ partition $[m]$.

Moreover, for any $1 \leq p \leq r-2$,  the 
matrices of source vertices and net reaction vectors $\bY_p, \bW_p$
related to the linkage class $L_p$ pass through \textbf{Alg-WR}$^{\ell=1}$.
Thus each linkage class $L_q$ admits a weakly reversible realization with $\text{dim}(\text{ker}(\bW_p)) = 1$.
Applying that $\{L_p\}^{r-2}_{p=1}$ partition $[m]$, we have
\begin{equation}
\label{flag=3 ker estimate}
\text{ker}(\bW_p) = \{\text{span}(\hbd_p) : \hbd_p \in S_2 \}, \ \text{for} \ 1 \leq p \leq r-2.
\end{equation}
Using Lemma \ref{lem:deficiency_zero generator} on the realization under 
\textbf{Alg-WR}$^{\ell=1}$,
we get
\begin{equation} \label{flag=3 deficiency}
\delta_1 = \cdots = \delta_{r-2} = 0,
\end{equation}
where $\delta_p$ represents the deficiency of linkage class $L_p$.

Now we compute the deficiency of the whole realization $\delta$. From \eqref{flag=3 ker estimate}, we obtain
\begin{equation} \label{flag=3 ker sum}
\text{dim}(\text{ker}(\bW_1) + \text{dim}(\text{ker}(\bW_2) + \cdots + \text{dim}(\text{ker}(\bW_{r-2})
= r-2 = \text{dim}(\text{ker}(\bW) - 1.
\end{equation}
Applying Lemma~\ref{lem:wr_ker} and Lemma \ref{lem:dim wr_ker} on Equation~\eqref{flag=3 ker sum}, we deduce for $p = 1, \ldots, r-2$, 
\begin{equation} \notag
\text{dim}(\text{ker}(\bW_p)) = 1 + \delta_{p} = |L_p| - s_p 
\ \ \text{and } \ 
\sum\limits_{p=1}^{r-2} s_{p} = s + 1,
\end{equation}
where $s_p$ and $s$ represent the stoichiometric subspace for linkage class $L_p$ and whole network respectively. 
Summing from $p = 1$ to $r-2$, we get
\begin{equation} \notag
\sum\limits_{p=1}^{r-1} (|L_p| - s_p) = m - (s + 1) = (r-2) + \sum\limits_{q=1}^{r-1} \delta_{q}.
\end{equation}
From Equation~\eqref{flag=3 deficiency}, we conclude that
\begin{equation} \notag
\delta = m - s - (r-2) = \sum\limits_{q=1}^{r-1} \delta_{q} + 1 = 1,
\end{equation}
and the system admits a weakly reversible deficiency one realization of Type II.
\end{proof}

\begin{lemma}
\label{lem: algorithm_2_part b}
Suppose the dynamical system $\dot{\bx} =\displaystyle \sum_{i=1}^m \bx^{\by_i} \bw_i$ admits a weakly reversible deficiency one realization, then Algorithm \ref{algorithm:WR_def_one} must set the flag value to be either $1,2$ or $3$.
\end{lemma}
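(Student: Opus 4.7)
The plan is to assume the dynamical system $\dot{\bx} = \sum_{i=1}^m \bx^{\by_i} \bw_i$ admits a weakly reversible deficiency one realization and then invoke Remark~\ref{rmk:unique of deficiency_one}, which pins down the realization as being of exactly one type: either a single linkage class ($\ell = 1$), or Type I with $\ell > 1$, or Type II with $\ell > 1$. In each case I will identify a specific branch of the algorithm that must fire and set the flag to a positive value. Since the algorithm computes the (essentially unique) minimal generating set $d_{\bW}$ of $\ker(\bW) \cap \mathbb{R}^m_{\geq 0}$ and iterates through all pairs $(\bd_i, \bd_j)$ at line~\ref{line:loop}, it suffices to exhibit \emph{one} such pair that satisfies the required conditions; the loop will reach it eventually.

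In the case $\ell = 1$, Lemma~\ref{lem:deficiency_one_cases part1} yields $r = 2$ with $\text{supp}(\bd_1) \cup \text{supp}(\bd_2) = [m]$, so the algorithm enters the branch at line~\ref{line:6}; since a WR realization with a single linkage class exists by assumption, \textbf{Alg-WR}$^{\ell=1}$ succeeds and flag is set to $1$. In the Type I case with $\ell > 1$, Lemma~\ref{lem:deficiency_one_cases part2} gives $r = \ell + 1 = \bw^*$ and supplies a distinguished pair $\bd_i, \bd_j$ with supports contained in $L_\ell$ satisfying $\text{supp}(\bd_i) \cup \text{supp}(\bd_j) = L_\ell$, together with $\ell - 1$ other generators whose supports are exactly $L_1, \ldots, L_{\ell-1}$. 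When the loop selects this pair, the disjointness condition at line~\ref{line:19} holds automatically, the linkage classes constructed by the algorithm coincide with $L_1, \ldots, L_\ell$ of the given realization, and restricting the original realization to each class yields a WR realization with a single linkage class. Hence each call to \textbf{Alg-WR}$^{\ell=1}$ succeeds and flag $= 2$.

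The Type II case is analogous: Lemma~\ref{lem:deficiency_one_cases part3} gives $r = \ell + 2 = \bw^* + 1$, with $\ell$ generators having supports exactly $L_1, \ldots, L_\ell$ and two further generators whose supports properly meet every $L_p$. Choosing the latter two as $S_1$ makes the supports of the $\ell$ members of $S_2$ partition $[m]$, so the condition at line~\ref{line:29} is triggered; the check at line~\ref{line:31} follows from $\dim(\ker(\bW_p)) = 1$ for each $p$, which is part of Lemma~\ref{lem:deficiency_one_cases part3}(a). Then \textbf{Alg-WR}$^{\ell=1}$ succeeds on each linkage class by restricting the given realization, yielding flag $= 3$.

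The main obstacle is the careful matching between the algorithmic conditions (disjoint supports, partitioning of $[m]$, kernel-dimension checks, single-linkage-class realizability via \textbf{Alg-WR}$^{\ell=1}$) and the structural information about the extreme vectors extracted in Lemmas~\ref{lem:deficiency_one_cases part1}, \ref{lem:deficiency_one_cases part2}, and \ref{lem:deficiency_one_cases part3}. Once the correct pair $(\bd_i, \bd_j)$ is located inside the minimal generating set via those lemmas, every condition the algorithm tests reduces to a direct reading of the support pattern of that pair, so no new computation beyond bookkeeping is required.
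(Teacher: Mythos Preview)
Your proposal is correct and follows essentially the same approach as the paper's own proof: split into the three cases (single linkage class, Type~I with $\ell>1$, Type~II with $\ell>1$), invoke Lemmas~\ref{lem:deficiency_one_cases part1}, \ref{lem:deficiency_one_cases part2}, and \ref{lem:deficiency_one_cases part3} respectively to locate the distinguished pair of extreme vectors, and verify that the algorithm's branch conditions and the calls to \textbf{Alg-WR}$^{\ell=1}$ succeed by restricting the given realization to each linkage class. The only difference is that the paper adds a short final paragraph arguing that the resulting flag value is uniquely determined by the type, which goes slightly beyond the lemma statement and is not needed for what you were asked to prove.
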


\begin{proof}

Note that every weakly reversible deficiency one network belongs to the following:
\begin{enumerate}
\item Weakly reversible deficiency one realization consisting of a single linkage class.
\item Weakly reversible deficiency one realization of Type I, with two or more  linkage classes.
\item Weakly reversible deficiency one realization of Type II.
\end{enumerate}
Therefore, we split our proof into the above three cases.

\medskip

\textbf{Case 1: }
Suppose the system admits a weakly reversible deficiency one realization consisting of a single linkage class.
From Lemma \ref{lem:deficiency_one_cases part1}, we obtain that $r = 2$ and the input $\bY$ and $\bW$ pass through 
\textbf{Alg-WR}$^{\ell=1}$
from the weakly reversibility. Therefore, Algorithm \ref{algorithm:WR_def_one} will exit with flag $=1$.

\medskip

\textbf{Case 2: }
Suppose the system admits a weakly reversible deficiency one realization of Type I with $\ell > 1$ linkage classes, denoted by $L_1, L_2, \ldots, L_{\ell}$.
From Lemma \ref{lem:deficiency_one_cases part2}, we have  
\begin{equation} \notag
\text{dim}(\text{ker}(\bW_{\ell}))= 2
\ \ \text{and } \ 
\text{dim}(\text{ker}(\bW_p)) = 1, \ \text{for} \ 1 \leq p \leq \ell-1,
\end{equation}
and
\begin{equation} \notag
\text{dim}(\text{ker}(\bW)) = \text{dim}(\text{ker}(\bW_1)) + \cdots + \text{dim}(\text{ker}(\bW_{\ell})) = \ell + 1 = r.
\end{equation}
Moreover, there exist $\ell + 1$ vectors  $\bd_1, \ldots, \bd_{\ell + 1}$ forming the minimal set of generators of $\text{ker}(\bW)\cap\mathbb{R}^m_{\geq 0}$, such that for
$p=1, \ldots, \ell-1$,  
\begin{equation} \notag
\text{supp}(\bd_p) = L_p,
\end{equation}
and
\begin{equation} \notag
\text{supp}(\bd_{\ell}) \subsetneq L_{\ell}, \ \
\text{supp}(\bd_{\ell+1}) \subsetneq L_{\ell}, \ \ 
\text{supp}(\bd_{\ell})\cup \text{supp}(\bd_{\ell +1}) = L_{\ell}.
\end{equation}
Thus, when $i = \ell$ and $j = \ell +1$, 
(i.e. $\bd_i = \bd_{\ell}$ and $\bd_j = \bd_{\ell+1}$), Algorithm \ref{algorithm:WR_def_one} will exit with flag $=2$.

\medskip

\textbf{Case 3: }
Suppose the system admits a weakly reversible deficiency one realization of Type II with $\ell > 1$ linkage classes, denoted by $L_1, L_2, \ldots, L_{\ell}$.
From Lemma \ref{lem:deficiency_one_cases part3}, we have  
\begin{equation} \notag
\text{dim}(\text{ker}(\bW_p)) = 1, \ \text{for} \ 1 \leq p \leq \ell,
\end{equation}
and
\begin{equation} \notag
\text{dim}(\text{ker}(\bW)) - 1 = \text{dim}(\text{ker}(\bW_1)) + \cdots + \text{dim}(\text{ker}(\bW_{\ell})) = \ell = r - 2.
\end{equation}
Moreover, there exist $\ell + 2$ vectors  $\bd_1, \ldots, \bd_{\ell + 2}$ forming the minimal set of generators of $\text{ker}(\bW)\cap\mathbb{R}^m_{\geq 0}$, such that for
$p=1, \ldots, \ell$,
\begin{equation} \notag
\text{supp}(\bd_p) = L_p, \ \
\text{supp}(\bd_{\ell +1}) \cap L_p \subsetneq L_p, \ \
\text{supp}(\bd_{\ell +2}) \cap L_p \subsetneq L_p.
\end{equation}
Again when we pick $i = \ell$ and $j = \ell +1$, Algorithm \ref{algorithm:WR_def_one} will exit with flag $=3$.

\medskip

Lastly, we show every mass-action system admitting a weakly reversible deficiency one realization has a unique flag value after applying Algorithm \ref{algorithm:WR_def_one}.
Following Remark \ref{rmk:unique of deficiency_one}, we deduce that 
if flag $=3$ after passing the same mass-action system through the algorithm, the flag value cannot equal $1$ or $2$.
From Lemma \ref{lem:deficiency_zero generator} and Lemma \ref{lem:deficiency_one_cases part2}, we have $r = 2$ if flag $=1$, and $r = \ell + 1 > 2$ if flag $=2$. 
Thus, it is also impossible that the flag equals both $1$ and $2$ on the same mass-action system.
Therefore, we show the uniqueness and prove this lemma.
\end{proof}

The following remark is a direct consequence of Lemma~\ref{lem: algorithm_2_part b}.

\begin{remark}
If Algorithm \ref{algorithm:WR_def_one} sets the value of flag to 0, then $\dot{\bx} = \sum\limits_{i=1}^m \bx^{\by_i} \bw_i$ does not admit a weakly reversible deficiency one realization.
\end{remark}

\medskip

\begin{example}
\label{ex: alg2 pass}

Consider the system of differential equations
\begin{equation} \label{eq: alg2 pass}
\begin{split}
\dot{x} & = - x +x^2, \\
\dot{y} & =  x^3 - x^3y^2.
\end{split}
\end{equation}
We have $n = 2$ for the two state variables, and $m = 5$ for the five distinct monomials. The matrices of source vertices and net direction vectors are
\begin{equation}
\bY_s =  \begin{pmatrix}
1  & 2  & 3 & 3 &3\\
0 	& 0 & 0 & 1  & 2
\end{pmatrix}, \ \text{and } \
\bW =  \begin{pmatrix}
1  & -1  & 0 & 0 & 0\\
0 	& 0 & 1 & 0 & -1
\end{pmatrix}.
\end{equation}
respectively, which are inputs to Algorithm \ref{algorithm:WR_def_one}.

Then, we can compute that $\text{dim}(\text{ker}(\bW)) = 3$, and extreme vectors of $\text{ker}(\bW)\cap\mathbb{R}^m_{\geq 0}$ is given by
\begin{equation} \notag
\bd_1 = 
\begin{pmatrix}
1  \\
1 \\
0  \\
0 \\
0 \\
\end{pmatrix}, \ \ 
\bd_2=\begin{pmatrix}
0  \\
0 \\
0  \\
1 \\
0 \\
\end{pmatrix}, \ \ 
\bd_3=\begin{pmatrix}
0  \\
0 \\
1  \\
0 \\
1 \\
\end{pmatrix}.
\end{equation}
This shows that $r =3$, and the algorithm enters line~\ref{line:13}. 

Next, when we pick $i = 2$, $S_1= \{\bd_2,\bd_3\}$ and $S_2= \{\bd_1\}$. Note that $r = \text{dim}(\text{ker}(\bW)) = 3$, and the support of $S_1$ and every member of $S_2$ are disjoint, the algorithm defines candidate linkage classes are follows: 
\begin{equation} \notag
L_1 = \{\text{supp}(\bd_1)\} = \{1, 2\}, \ \
L_2 = \{\text{supp}(\bd_2) \cup \text{supp}(\bd_3) \} = \{3, 4, 5\}.
\end{equation}
Following the candidate linkage classes $L_1, L_2$, we derive the corresponding matrices of source vertices and net direction vectors:
\begin{equation} \notag
\begin{split}
& \bY_1 =  \begin{pmatrix}
1  & 2 \\
0  & 0 
\end{pmatrix}, \ \ 
\bW_1 =  \begin{pmatrix}
1  & -1  \\
0  &  0
\end{pmatrix}, \ \text{and } \ 
\bY_2 =  \begin{pmatrix}
3 & 3 & 3 \\
0 & 1 & 2 
\end{pmatrix}, \ \
\bW_2 =  \begin{pmatrix}
0  &  0 & 0 \\
1  &  0 & -1
\end{pmatrix}.
\end{split}
\end{equation}

After that, we pass two pairs $(\bY_1, \bW_1)$ and $(\bY_2, \bW_2)$ through 
\textbf{Alg-WR}$^{\ell=1}$.
Both pairs pass successfully through 
\textbf{Alg-WR}$^{\ell=1}$,
i.e., a weakly reversible single linkage class exists for both arrangements. Finally, the algorithm sets flag $=2$ on line~\ref{line:27}, and exits. 
Therefore, \eqref{eq: alg2 pass} admits a weakly reversible deficiency one realization of Type I, whose E-graph is shown in Figure \ref{fig:alg2pass}.

\begin{figure}[H]
\centering
\includegraphics[scale=0.5]{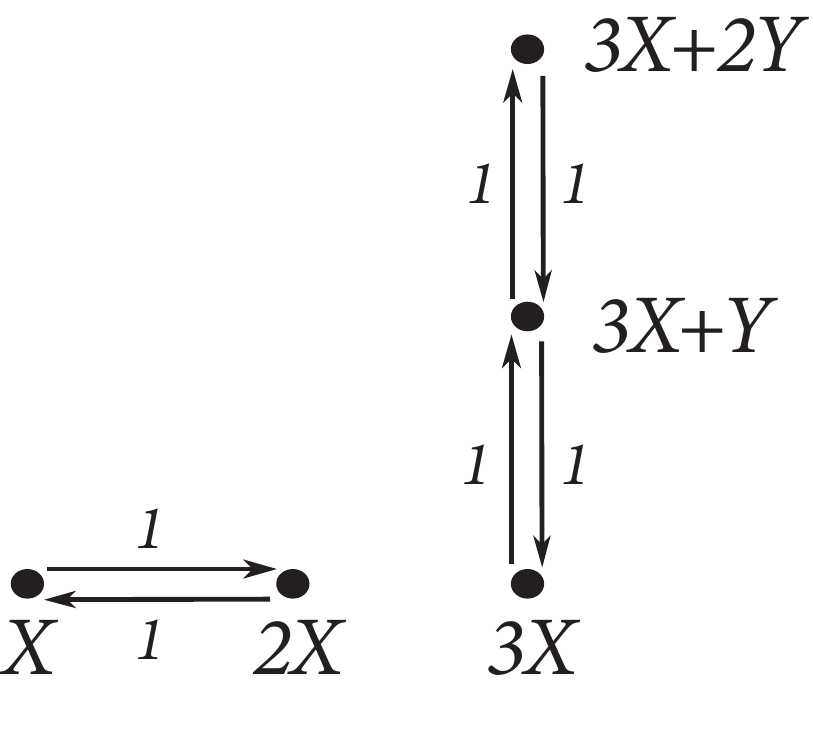}
\caption{A weakly reversible deficiency one mass-action system from Example \ref{ex: alg2 pass}}
\label{fig:alg2pass}
\end{figure}
\end{example}

\begin{example}

Consider the system of differential equations
\begin{equation} \label{ex: alg2 fail}
\begin{split}
\dot{x} & = - x +x^2, \\
\dot{y} & =  0.
\end{split}
\end{equation}
We have $n = 2$ for the two state variables, and $m = 2$ for the two distinct monomials. The matrices of source vertices and net direction vectors are
\begin{equation}
\bY_s =  \begin{pmatrix}
1  & 2  \\
0 	& 0 
\end{pmatrix}, \ \text{and } \
\bW = \begin{pmatrix}
-1  & 1 \\
0 	& 0 
\end{pmatrix}.
\end{equation}
respectively, which are inputs to Algorithm \ref{algorithm:WR_def_one}.

Then, we compute that $\text{dim}(\text{ker}(\bW)) = 1$, and the extreme vector of $\text{ker}(\bW)\cap\mathbb{R}^2_{\geq 0}$ is
\begin{equation} \notag
\bd_1 = 
\begin{pmatrix}
1  \\
1 
\end{pmatrix}.
\end{equation}
This shows that $r=1$, then the algorithm satisfies the condition on line~\ref{line:5} and exits the program with initial flag $= 0$.
Therefore, there doesn't exist any weakly reversible deficiency one realization for this system.
\end{example}

\subsection{Implementation of Algorithm \ref{algorithm:WR_def_one}}
\label{sec:complexity}

In this section, we discuss how to implement Algorithm \ref{algorithm:WR_def_one}. 
The algorithm is designed to find a weakly reversible deficiency one realization that generates the dynamical system $\dot{\bx} = \sum\limits_{i=1}^m \bx^{\by_i} \bw_i$, and it has three key steps:

\begin{enumerate}

\item Compute $\dim (\ker(\bW))$ and $\dim (\ker(\bW_i))$.

\item Find the extreme vectors of the cone $\text{ker}(\bW)\cap\mathbb{R}^m_{\geq 0}$.

\item Pass pairs of the matrices
$\bY_s, \bW$ or $\bY_i, \bW_i$ through \textbf{Alg-WR}$^{\ell=1}$.

\end{enumerate}

\medskip

In Step 1, the implementation needs a rank-revealing factorization; we need to find a basis of $\bW$ or $\bW_{i}$, and then we can check the number of vectors in this basis.
This is equivalent to solving a linear programming problem. 

\medskip

In Step 2, we note that by the Minkowski-Weyl theorem~\cite{2016david,rockafellar1970convex}, there exists two representations of a polyhedral cone $C$ given by: 
\begin{enumerate}

\item[(a)] \textbf{H-representation}: 
There exists a matrix $A$, such that the cone $C$ can be written as
\begin{equation*}
    C = \{A x\leq 0\}.
\end{equation*}

\item[(b)] \textbf{V-representation}: 
The cone $C$ has the minimal set of generators $\{d_i\}$, such that
\begin{equation*}
    C = \displaystyle\sum_{i=1}^r \lambda_i d_i,
\end{equation*}
where $\lambda_i \geq 0$.
\end{enumerate}


To find the extreme vectors of the cone $\text{ker}
(\bW)\cap\mathbb{R}^m_{\geq 0}$,
we need a way to convert from the H-representation to the V-representation. There are two popular ways of performing this conversion:

\begin{enumerate}

\item[(a)] \emph{Double description method}: This is an example of an \emph{incremental} method, where the conversion from H-representation to V-representation is performed assuming that the solution to a smaller problem is already known~\cite{motzkin1953double}. In particular, let  $C(A):=\{A x\leq 0\}$. Let $J$ be a subset of the row indices of $A$. We will denote by $A_J$ the submatrix of $A$ obtained by selecting the $J$ rows of $A$. Let us assume that we have found the minimal set of generators for the cone $C(A_J)$. We will denote by $E$ the generating matrix whose columns are the extreme vectors of $C(A_J)$.  The double description algorithm selects an index $h$ that is not present in $J$ and constructs the generating matrix $E'$ that corresponds to the $A_{J+h}$. This is repeated for several iterations until the generating matrix for $C(A)$ is found. This algorithm is useful in cases where the inputs are degenerate and the dimension of the cone is small.

\item[(b)] \emph{Pivoting methods}: In this method, the extreme vectors of the cone are found by the \emph{reverse search} technique, where the simplex algorithm (that uses pivots iteratively) is run in reverse for the linear programming problem $Ax\leq 0$. The reverse search method determines the extreme vectors of the cone by building a tree in a depth-first-search fashion. This method was developed by Avis and Fukuda~\cite{avis1991pivoting}. It is particularly useful for non-degenerate inputs where it runs in time polynomial of the input size. 
\end{enumerate}

\medskip

In Step 3, we apply \textbf{Alg-WR}$^{\ell=1}$, and this step can be done by solving a sequence of linear programming problems. More details can be found in section 4.4 in~\cite{def_one}.

\section{Discussion}
\label{sec:discussion}

Weakly reversible deficiency one networks are ubiquitous in biochemistry, and are known to have the capacity to exhibit sophisticated dynamics. Some notable examples include the Edelstein network, as in Example~\ref{ex:edelstein}.
To better understand their dynamics, we divide them into two categories:  (i) Type I networks, where all linkage classes have deficiency zero except one linkage class having deficiency one, and (ii)  Type II networks, where all linkage classes have deficiency zero. The crucial quantity in the analysis of such networks is the pointed cone $\text{ker}(\bW)\cap\mathbb{R}^m_{\geq 0}$, where $\bW$ is the matrix formed by the net reaction vectors. In particular, the extreme vectors of this cone can be divided into two classes: cyclic generators and stoichiometric generators. Networks of Type I possess only cyclic generators and satisfy the conditions of the Deficiency One Theorem. Consequently, for Type I networks, there exists a unique steady state within every stoichiometric compatibility class. For Type II networks, the set of stoichiometric generators is not empty. The stoichiometric generators define subnetworks, such that if these subnetworks possess multiple steady states, then the original network also allows multiple steady states~\cite{conradi2007subnetwork}. 

In addition, we show that networks of different types cannot be dynamically equivalent. Theorem~\ref{lem:unique of deficiency_one} establishes this fact, and this implies that any mass-action system, at most, has one type of weakly reversible deficiency one realization, either Type I or Type II. In Section~\ref{sec:pointed_cone} we analyze in depth the extreme vectors of the cone $\text{ker}(\bW)\cap\mathbb{R}^m_{\geq 0}$ for weakly reversible deficiency one networks. In particular, we show that for Type I networks with $\ell$ linkage classes, there exist $\ell +1$ generators, while for Type II networks with $\ell$ linkage classes, there exist $\ell +2$ generators. Lemmas~\ref{lem:deficiency_one_cases part2} and~\ref{lem:deficiency_one_cases part3} establish these facts. 

In Section~\ref{sec:algorithms} we describe our main result: the construction and the proof of correctness of Algorithm \ref{algorithm:WR_def_one}.
This algorithm takes as input a matrix of source vertices and the corresponding matrix of net reaction vectors.
Algorithm \ref{algorithm:WR_def_one} uses
\textbf{Alg-WR}$^{\ell=1}$
as a subroutine and determines whether or not there exists a weakly reversible deficiency one realization for this input. 
It is interesting to put this algorithm in the context of existing algorithms in the literature. There has been seminal work in this direction~\cite{johnston2013computing,liptak2016kinetic,szederkenyi2013optimization,rudan2014efficiently,rudan2014polynomial,szederkenyiweak2011finding,liptak2015computing}
based mostly on optimization methods that rely  on mixed integer linear programming to determine the existence of realizations of a certain type. 
The algorithm in this paper uses a novel and straightforward geometric approach by focusing on the extreme vectors of the cone $\text{ker}(\bW)\cap\mathbb{R}^m_{\geq 0}$, instead of posing it as a constrained optimization problem. 
Algorithm \ref{algorithm:WR_def_one} uses 
\textbf{Alg-WR}$^{\ell=1}$
%
and the properties of the extreme vectors of the cone $\text{ker}(\bW)\cap\mathbb{R}^m_{\geq 0}$ to determine the existence of weakly reversible deficiency one realizations. This geometric approach in both algorithms allows for a fully self-contained mathematical analysis of the correctness of these algorithms.

This work opens up interesting new avenues for future research. In particular, the relationship between the minimal set of generators of the cone $\text{ker}(\bW)\cap\mathbb{R}^m_{\geq 0}$ and the deficiency of the network can be explored in greater depth. 
One could also explore the existence of mutually exclusive types of weakly reversible realizations for networks of higher deficiency. 
Another possible direction would be to explore the geometry of this minimal set of generators for weakly reversible networks of higher deficiency.

\bibliographystyle{unsrt}
\bibliography{Bibliography}

\end{document}